\newtheorem{defi}{Definition}[section]
\newtheorem{lema}[defi]{Lemma}
\newtheorem{teo}[defi]{Theorem}
\newtheorem{rem}[defi]{Remark}
\newtheorem{coro}[defi]{Corollary}
\newtheorem{pro}[defi]{Proposition}
\newtheorem*{rem*}{Remark}
\newcommand{\hb}{\mathcal{H}}
\newcommand{\interior}[1]{%
  {\kern0pt#1}^{\mathrm{o}}%
}
\newcommand{\T}{\mathbb{T}}
\newcommand{\C}{\mathbb{C}}
\newcommand{\Q}{\mathbb{Q}}
\newcommand{\R}{\mathbb{R}}
\newcommand{\N}{\mathbb{N}}
\newcommand{\Z}{\mathbb{Z}}
\newcommand{\norm}[1]{\left\lVert#1\right\rVert}
\newcommand{\esp}{\text{  }}
\renewcommand\eqref[1]{(\ref{#1})} 
\begin{document}

\title[SPECTRAL PROPERTIES OF PSEUDO-DIFFERENTIAL OPERATORS OVER COMPACT VILENKIN GROUPS]
 {SPECTRAL PROPERTIES OF PSEUDO-DIFFERENTIAL OPERATORS OVER THE COMPACT GROUP OF $p$-ADIC INTEGERS AND COMPACT VILENKIN GROUPS}

\author{  J.P. Velasquez-Rodriguez
}

\newcommand{\Addresses}{{
  \bigskip
  \footnotesize
  J.P.~VELASQUEZ-RODRIGUEZ, \textsc{Department of Mathematics: Analysis, Logic and Discrete Mathematics, Ghent University, Belgium.}\par\nopagebreak
  \textit{E-mail address:} \texttt{JuanPablo.VelasquezRodriguez@UGent.be}
}
}
\thanks{The 
	author was supported by the FWO Odysseus 1 grant G.0H94.18N: Analysis and Partial Differential Equations.}


\subjclass[2010]{Primary; 34L05, 35P05; Secondary: 35P15, 15A18, 15A450. }

\keywords{Infinite matrices, Fredholm operators, Fourier Analysis, Pseudo-differential operators}

\date{\today}
\begin{abstract}

In this paper. we study properties such as $L^r$-boundedness, compactness, belonging to Schatten classes and nuclearity, Riesz spectral theory, Fredholmness, ellipticity and Gohberg's lemma, among others, for pseudo-differential operators over the compact group of $p$-adic integers $\mathbb{Z}_p^d$, where the author in a recent paper introduced a notion of Hörmander classes and pseudo-differential calculus. We extend the results to compact Vilenkin groups which are essentially the same as $\mathbb{Z}_p^d$. Also we provide a new definition of H{\"o}rmander classes for pseudo-differential operators acting on non-compact Vilenkin groups and a explicit formula for the Fredholm spectrum in terms of the associated symbol.
\end{abstract}
\maketitle
 \tableofcontents
\section{\textbf{Introduction}}
The idea of representing linear operators with some ``less complex" mathematical object goes back centuries, and it has been studied in many important works. For example, the theory of pseudo-differential operators attemps to represent densely defined linear operators using operator valued symbols, and at the present, since the work of Kohn, Nirenberg, Hörmander among others to study problems in partial differential equations, many important properties have been put in terms of such symbol. A ``toy model" useful to understand the essentials of this technique, at least for compact smooth manifolds, is the $d$-dimensional torus $\T^d$. Linear operators acting on functions $f:\T^d \to \C$ can be described in terms of a complex valued function $\sigma: \T^d \times \Z^d \to \C$, called the symbol of the operator. More precisely, for a densely defined linear operator $A:D(A) \subseteq L^2 (\T^d) \to L^2 (\T^d)$ there exist a symbol $\sigma_A: \T^d \times \Z^d \to \C$ given by formula  $\sigma_A (x , k) := e^{- i x \cdot k} (A e_k ) (x),$ where $e_k (x):= e^{i x \cdot k}$, such that \begin{equation}\label{quantisation}
    (A f)(x) = \sum_{k \in \Z^d} \sigma_A (x , k) \widehat{f} (k) e^{i x \cdot k},
\end{equation}for every trigonometric polynomial $f \in Span\{e^{i x \cdot k }\}_{k \in \Z^d}$. Here, as usual, for an integrable function $f: \T^d \to \C$ we denote its $k$-th Fourier coefficient by $\widehat{f} (k)$.

A linear operator defined by formula (\ref{quantisation}) is often called a periodic pseudo-differential operator with symbol $\sigma_A$, and the most important example of such operators are the differential operators with smooth periodic coefficients. The interested reader may find more basic information about these linear operators and related topics in the reference books \cite{ruzhansky1, wongdfa}. With this symbolic representation, important properties such as boundedness \cite{LpboundsDuvan, LpboundsJulio}, compactness \cite{shahla}, belonging to Schatten classes and nuclearity \cite{Cardona2019, DELGADO2014779, nuclearJulio}, Riesz spectral theory \cite{Velasquez-Rodriguez2019}, Fredholmness, ellipticity and Gohber's lemma \cite{Molahajloo2010}, among others, have been characterised in terms of the symbol. 

Using the same ideas in a recent paper the author introduce a notion of pseudo-differential operators, Hörmander classes and symbolic calculus, for operators acting on measurable functions $f: \Z_p^d \to \C$, and discussed its relation with the quantisations given in \cite{pseudosvinlekin, harmonicfractalanalysis} for locally compact Vilenkin groups. Then, exactly in the same way as for $\T^d$, we can think on densely defined linear operators as linear operators given in terms of a symbol $\sigma:\Z^d_p \times \widehat{\Z}^d_p \to \C$ trough the formula \begin{equation}\label{pseudosZp}
    T_\sigma f (x) = \sum_{\xi \in \widehat{\Z}_p^d} \sigma (x , \xi) \widehat{f}(\xi) \chi_p (\xi \cdot x) .
\end{equation} Here $\chi_p (\xi \cdot x):= e^{2 \pi i \{\xi \cdot x\}_p}$ is a $p$-adic character, $\{ \cdot \}_p$ is the $p$-adic fractional part, and $$\widehat{f}(\xi):= \mathcal{F}_{\Z_p^d} [f] (\xi ) := \int_{\Z_p^d} f(x) \overline{ \chi_p (\xi \cdot x)} dx,$$ where $dx$ is the normalised Haar measure in $\Z_p^d$, is the Fourier transform in $\Z^d_p$. In this context the inverse Fourier transform is given by $$\mathcal{F}_{\widehat{\Z}_p^d} [\varphi](x) := \sum_{\xi \in \widehat{\Z}_p^d} \varphi (\xi) \chi_p (\xi \cdot x),$$for suitable functions $\varphi: \widehat{\Z}_p^d \to \C$. We will call a linear operator written in the form (\ref{pseudosZp}) a $p$-adic pseudo-differential operator with symbol $\sigma(x,\xi)$. 

Since all the analysis in the toroidal case relies only on the Fourier analysis of compact groups, we will be able to extend by analogy the theory from $\T^d$ to $\Z_p^d$. Moreover, although our analysis will be done for simplicity on $\Z_p$, most of it remains valid for general compact Vilenkin groups where, in the knowledge of the author, there is not a systematic treatment of the theory from the point of view of H{\"o}rmander classes, in contrast with the more  studied locally compact (non compact) case \cite{zunigagalindo2, Onneweer1978, pseudosvinlekin, harmonicfractalanalysis, zunigagalindo1}. Similar to $\R^d$, where a linear operator acting on smooth functions may be expressed as $$T f(x) = \int_{\R^d} \sigma_T (x , \xi) \widehat{f}(\xi) e^{i x \xi } d \xi,$$for local fields and locally compact Vilenkin groups there is a notion of pseudo-differential operators and symbol classes. For example, for locally compact Vilenkin groups, one may consider linear operators with the form $$T_\sigma f (x) = \int_{\widehat{G}} \int_G \sigma (x,\xi) f(y) \overline{\chi_G (\xi (y-x))} dy d\xi ,$$ where $\widehat{G}$ is the Pontryagin dual of the group $G$. Our goal here is to develop a systematic theory for the compact case, since the available literature seems to be focused in non-compact groups. More information about pseudo-differential operators over local fields may be found in the reference book \cite{harmonicfractalanalysis} and the references there in. 

The motivation for the development of a theory of pseudo-differential operators over Vilenkin groups comes mainly from two sources. The first is the theory of differentiation on a $p$-adic or a $p$-series Field, initiated by J.E. Gibss and M.J. Millard \cite{gibbs1, gibbs2} with their definition of differentiability in the dyadic group. For detailed treatment of this definition, its implications and applications, see \cite{Dyadic} and the references there in. Later P.L. Butzer and H.J. Wagner introduced a slight different definition to that of Gibss-Millar, and showed that with their definition of differentiability one obtain a theory similar to the classical theory of differentiability for functions defined on the torus, although there are also important differences between the two theories. This fact is remarked by C. W. Onneweer \cite{Onneweer1977, Onneweer1978} who also extends the definition to a $p$-adic or $p$-series field. The second motivation is the study of $p$-adic pseudo-differential equations on a ball, initiated with the study of Vladimirov operator on a $p$-adic ball in \cite{fisicapadica}, and later continued with the works of A. Kochubei \cite{kochubeibook, p-adicball1, p-adicball2} and  Avetisov and Bikulov \cite{2002J, Avetisov2014, 2009JPhA...42h5003A}, among others. Both motivations happen to be two different perspectives of the same situation, as they reader might notice from \cite[Theorem 1]{Onneweer1977} and \cite[Lemma 1]{basisQp}. The characters $\chi_p (\xi \cdot x)$ are eigenfunctions of the derivatives in the sense of \cite{Onneweer1977} and the Vladimirov operator in the sense of \cite{Avetisov2014}. Thus as linear operators they define the same Fourier multiplier (except maybe, depending of the definitions, for a constant) and serve as inducement to a theory for more general operators. Our goal now is to describe the basics of such a theory in $\Z_p^d$ and its generalisation to more general Vilenkin groups. 

Our exposition will be done as follows:
\begin{itemize}
    \item In Section 2 we recall some basic definitions and some results proven in \cite{p-adicHormanderclasses}.
    \item In Sections 3 we study the Sobolev boundedness of pseudo-differential operators.
    \item In Section 4 we give some necessary and sufficient conditions on the symbol of a pseudo-differential operator for belonging to the class of compact operators and the class of Riesz or inessential operators, similar to \cite{Velasquez-Rodriguez2019}.
    \item In Section 5 we study the nuclearity and summability of $s$-numbers for pseudo-differential operators.
    \item In Section 6 we provide an explicit formula for the Fredholm spectrum of a pseudo-differential operator in terms of the symbol.
    \item In Section 7 we prove a version of the Weyl law for the asymptotic growth of eigenvalues of sectorial $n$-hypoelliptic pseudo-differential operators. 
    \item In Section 8 we give a brief exposition about our notion of symbol classes in locally compact Vilenkin groups. 
\end{itemize}
\newpage
\section{\textbf{Preliminaries}}
Along this article $p$ will denote a fixed prime number. The field of $p$-adic numbers $\Q_p$ is defined as the complection of the field of rational numbers $\Q$ with respect to the $p$-adic norm $|\cdot|_p$ which is defined as \[|x|_p := \begin{cases}
0 & \esp \text{if} \esp x=0, \\ p^{-\gamma} & \esp \text{if} \esp x= p^{\gamma} \frac{a}{b},
\end{cases}\]where $a$ and $b$ are integers coprime with $p$. the integer $\gamma:= ord(x)$, with $ord(0) := + \infty$, is called the $p$-adic order of $x$. The unit ball of $\Q_p$ with the $p$-adic norm is called the compact group of $p$-adic integers and it will be denoted by $\Z_p$. Any $p$-adic number $x \neq 0$ has a unique expansion of the form $$x = p^{ord(x)} \sum_{j=0}^{\infty} x_j p^j,$$where $x_j \in \{0,1,...,p-1\}$ and $x_0 \neq 0$. By using this expansion, we define the fractional of $x \in \Q_p$, denoted by $\{x\}_p$, as the rational number\[\{x\}_p := \begin{cases}
0, & \esp \text{if} \esp x=0 \esp \text{or} \esp ord(x) \geq 0, \\ p^{ord(x)} \sum_{j=0}^{-ord(x)-1} x_j p^j,& \esp \text{if} \esp ord(x) <0.
\end{cases}\]We can extend the $p$-adic norm to $\Q_p^d$ by taking $$||x||_p := \max_{1 \leq i \leq d} |x_i|_p, \esp \esp \text{for } \esp x=(x_1,...,x_d) \in \Q_p^d.$$The unit ball of $\Q_p^d$ with the above defined norm will be denoted by $\Z_p^d$. It is known that $\Z_p^d$ is a compact totally disconected abelian group. Its dual group in the sense of Pontryagin, the collection of characters of $\Z_p^d$, will be denoted by $\widehat{\Z}_p^d$. The dual group of the $p$-adic integers is known to be the Pr{\"u}fer group $\Z (p^{\infty})$,  the unique $p$-group in which every element has $p$ different $p$-th roots. The Pr{\"u}fer group way be identified with the quotient group $\Q_p/\Z_p$. In this way the characteres of the group $\Z_p$ may be written as $$\chi_p (\xi  x) := e^{2 \pi i \{ \xi x \}_p}, \esp \esp x \in \Z_p, \esp \xi \in \widehat{\Z}_p \cong \Q_p / \Z_p.$$

By the Peter-Weyl theorem the elements of $\widehat{\Z}_p^d$ constitute an orthonormal basis for the Hilbert space $L^2 (\Z_p^d)$ which provide us a Fourier analysis for suitable functions defined on $\Z_p^d$ in such a way that the formula $$f(x) = \sum_{\xi \in \widehat{\Z}_p^d} \widehat{f}(\xi) \chi_p (\xi \cdot x),$$holds almost everywhere in $\Z_p^d$. Here $\widehat{f}$ denotes the Fourier transform of $f$ in turn defined as $$\widehat{f}(\xi):= \int_{\Z_p^d} f(x) \overline{\chi_p (\xi \cdot x)}dx,$$where $dx$ is the normalized Haar measure in $\Z_p^d$ and $\xi \cdot x := \xi_1 x_1 + ... + \xi_d x_d$. The above series are called the Fourier series of the function $f$.

Using the representation of functions in its Fourier series, for a given densely defined linear operator $$T: \hb^\infty := Span\{\chi_p (\xi \cdot x)\}_{\xi \in \widehat{\Z}_p^d } \subset D(T) \subseteq L^2 (\Z_p^d) \to L^2 (\Z_p^d),$$we can define its associated symbol $\sigma_T (x, \xi)$ by formula $$\sigma_T (x, \xi) = \overline{\chi_p (\xi \cdot x)} T \chi_p (\xi \cdot x).$$In this way we can think on any densely defined linear operator as a pseudo-differential operator given by formula $$Tf(x) = \sum_{\xi \in  \widehat{\Z}_p^d} \sigma_T (x,\xi) \widehat{f}(\xi) \chi_p (\xi \cdot x).$$
Our goal now is to characterize several properties of densely defined linear operators, which from now on will be be thought as pseudo-differential operators, in terms of its associated symbol. For that reason in what follows we will only consider pseudo-differential operators defined in terms of previously given a symbol. Specially, we will work with pseudo-differential operators whose symbols belong to the H{\"o}rmander classes given in Definition \ref{hormanderclasses}. These classes are thought in such a way that they include the Vladimirov operator in the ball given by the expression $$D^s f (x):= \sum_{\xi \in \widehat{\Z}_p^d} ||\xi||_p^s \widehat{f}(\xi) \chi_p (\xi \cdot x).$$See \cite{zunigagalindo2, p-adicball1, p-adicball2, pseudosvinlekin, zunigagalindo1} for more information about the Vladimirov-Taibleson operator and \cite{Onneweer1977, Onneweer1978, Dyadic, harmonicfractalanalysis} for its relation with the Gibbs derivative. Now we recall some definitions used in \cite{p-adicHormanderclasses}.
\begin{defi}\label{defp-adicsobolev}\normalfont
 For our purposes the $L^r$-based \emph{Sobolev spaces} $H^s_r (\Z_p^d)$ are the metric completion of $\hb^\infty :=Span\{\chi_p (\xi \cdot x)\}_{\xi  \in \widehat{\Z}_p^d},$ with the norm $$||f||_{H^s_r (\Z_p^d)} := ||J_s f||_{L^r (\Z_p^d)}.$$ Here we are using the notation $\langle \xi \rangle := \max\{1 , ||\xi||_p \},$ and $$J_s f (x):= \sum_{\xi \in \widehat{\Z}_p^d} \langle \xi \rangle^s \widehat{f}(\xi) \chi_p (\xi \cdot x).$$We will call \emph{smooth functions over} $\Z_p^d$ to the elements of the class$$C^\infty (\Z_p^d) := \bigcap_{s \in \R} H^s_2 (\Z_p^d).$$The class of \emph{distributions on} $\Z_p^d$ is defined as $$\mathfrak{D}(\Z_p^d) := \bigcup_{s \in \R} H^s_2 (\Z_p^d).$$ The class $C^{\infty} (\Z_p^d \times \Z_p^d)$ is defined as $$C^{\infty} (\Z_p^d \times \Z_p^d):=\Big\{g \in L^2 (\Z_p^d \times \Z_p^d) \esp : \Big|\esp \int_{\Z_p^d \times \Z_p^d} g(x,y) \overline{\chi_p(\xi \cdot x)} \overline{\chi_p(\eta \cdot y)} dy dx\Big| \leq C_{g, s_1,s_2 } \langle \xi \rangle^{s_1} \langle \eta \rangle^{s_2} \Big\},$$for all $s_1 , s_2 \in \R$. See \cite{sobolevmetrizablegroups} for more information about Sobolev spaces over metrizable groups. 
 \end{defi}
Also we recall the definition of H{\"o}rmander classes introduced in \cite{p-adicHormanderclasses} that we will use along this work.
\begin{defi}\normalfont\label{hormanderclasses}
    \esp
\begin{enumerate}
    \item[(i)] For functions $\varphi: \widehat{\Z}_p^d \to \C$ let us define the difference operator $\triangleplus$ as $$\triangleplus_\eta^\xi \varphi (\xi) := \varphi(\xi + \eta) - \varphi (\xi).$$
    \item[(ii)] Let $0 \leq \delta \leq \rho \leq 1$ be given real numbers. We define the H{\"o}rmander symbol classes $\Tilde{S}^m_{\rho , \delta} (\Z_p^d \times \widehat{\Z}_p^d)$, $m \in \R$, as the collection of measurable functions $\sigma: \Z_p^d \times \widehat{\Z}_p^d \to \C$ such that, for all $\alpha , \beta \in \N_0$, the estimate $$|D^\beta_x \triangleplus_\eta^\xi \sigma (x, \xi )| \leq C_{\sigma m \alpha \beta} ||\eta||^\alpha_p \langle \xi \rangle^{m - \rho \alpha + \delta 
    \beta} , $$holds for some $C_{\sigma m \alpha \beta}>0$, and any $(x, k,h) \in \Z_p^d \times \widehat{\Z}_p^d \times \widehat{\Z}_p^d.$ Furthermore, we define $$\Tilde{S}^{-\infty }(\Z_p^d \times \widehat{\Z}_p^d) := \bigcap_{m \in \R}  \Tilde{S}^m_{0, 0} (\Z_p^d \times \widehat{\Z}_p^d),$$ $$\Tilde{S}^\infty_{\rho, \delta} (\Z_p^d \times \widehat{\Z}_p^d) := \bigcup_{m \in \R} \Tilde{S}^m_{\rho, \delta} (\Z_p^d \times \widehat{\Z}_p^d).$$We call operators with symbol in the class $\Tilde{S}^{-\infty }(\Z_p^d \times \widehat{\Z}_p^d)$ \emph{smoothing operators}.
\end{enumerate}
\end{defi}
For operators with symbol in these H{\"o}rmander classes the following is proven in \cite[Section 7]{p-adicHormanderclasses}:
\medskip
\begin{pro}
Let $0 <\rho \leq 1$. Let $T_{\sigma_1}$, $T_{\sigma_2}$ be pseudo-differential operators with symbols in the Hörmander classes $\sigma_1 \in \Tilde{S}^{m_1}_{\rho , 0} (\Z_p \times \widehat{\Z}_p)$,  $\sigma_2 \in \Tilde{S}^{m_2}_{ \rho , 0} (\Z_p \times \widehat{\Z}_p)$. Then :
\begin{enumerate}
    \item[(i)] $T_\sigma T_\tau = T_{\sigma \tau} + R$, where $R \in Op({\Tilde{S}^{-\infty}} (\Z_p \times \widehat{\Z}_p))$ and we have $\sigma_1 \sigma_2 \in \Tilde{S}^{m_1 + m_2}_{\rho, 0} (\Z_p \times \widehat{\Z}_p)$.
    \item[(ii)] If $\sigma \in \Tilde{S}^{m}_{\rho , 0} (\Z_p \times \widehat{\Z}_p)$ then $T_\sigma^t , T_\sigma^* \in Op(\Tilde{S}^{m}_{\rho , 0} (\Z_p \times \widehat{\Z}_p))$ and $\sigma^* - \overline{\sigma}, \sigma^t - \sigma \in {\Tilde{S}^{-\infty}} (\Z_p \times \widehat{\Z}_p).$
\end{enumerate}
\end{pro}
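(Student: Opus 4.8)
The plan is to treat all three assertions as instances of a single mechanism. The symbol of a densely defined operator is recovered by the dequantization formula $\sigma_T(x,\xi)=\overline{\chi_p(\xi\cdot x)}\,T\chi_p(\xi\cdot x)$, so for each of $T_{\sigma_1}T_{\sigma_2}$, $T_\sigma^*$ and $T_\sigma^t$ I would first write an \emph{exact} series for the resulting symbol, isolate the expected leading term ($\sigma_1\sigma_2$, $\overline{\sigma}$ and $\sigma$ respectively), and then prove that the remainder lies in $\Tilde{S}^{-\infty}$. Everything rests on two estimates that I would record first. The defining bound of Definition \ref{hormanderclasses} says that a \emph{single} difference already gains an arbitrary power of the frequency: $|\triangleplus_\eta^\xi\sigma(x,\xi)|\leq C_\alpha\,||\eta||_p^\alpha\,\langle\xi\rangle^{m-\rho\alpha}$ for every $\alpha\in\N_0$. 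Secondly, since $\delta=0$ the derivatives $D_x$ cost no power of $\langle\xi\rangle$, so integrating by parts $\beta$ times against $D_x$ (whose eigenfunctions are precisely the characters) gives rapid decay of the partial Fourier coefficients $\widehat{\sigma}(\theta,\xi):=\int_{\Z_p}\sigma(x,\xi)\overline{\chi_p(\theta\cdot x)}\,dx$, namely $|\widehat{\sigma}(\theta,\xi)|\leq C_\beta\,\langle\theta\rangle^{-\beta}\,\langle\xi\rangle^{m}$ for every $\beta\in\N_0$. I would also note the elementary facts that complex conjugation preserves $\Tilde{S}^m_{\rho,0}$ and that $\triangleplus$ obeys the Leibniz rule $\triangleplus_\eta^\xi(fg)=(\triangleplus_\eta^\xi f)\,g(\cdot,\xi+\eta)+f(\cdot,\xi)\,(\triangleplus_\eta^\xi g)$, from which the product statement $\sigma_1\sigma_2\in\Tilde{S}^{m_1+m_2}_{\rho,0}$ is immediate.

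For part (i), since $T_{\sigma_2}\chi_p(\zeta\cdot x)=\sigma_2(x,\zeta)\chi_p(\zeta\cdot x)\in C^\infty(\Z_p)$, I would expand this function in characters, apply $T_{\sigma_1}$ and dequantize to obtain the exact composition symbol
\begin{equation*}
\sigma_{T_{\sigma_1}T_{\sigma_2}}(x,\zeta)=\sum_{\theta\in\widehat{\Z}_p}\sigma_1(x,\zeta+\theta)\,\widehat{\sigma_2}(\theta,\zeta)\,\chi_p(\theta\cdot x).
\end{equation*}
Replacing $\sigma_1(x,\zeta+\theta)$ by its value $\sigma_1(x,\zeta)$ at $\theta=0$ and resumming over $\theta$ reconstitutes exactly $\sigma_1(x,\zeta)\sigma_2(x,\zeta)$, so the discrepancy is
\begin{equation*}
\sigma_{T_{\sigma_1}T_{\sigma_2}}(x,\zeta)-\sigma_1(x,\zeta)\sigma_2(x,\zeta)=\sum_{\theta\neq 0}\big(\triangleplus_\theta^\zeta\sigma_1\big)(x,\zeta)\,\widehat{\sigma_2}(\theta,\zeta)\,\chi_p(\theta\cdot x),
\end{equation*}
the term $\theta=0$ dropping out because $\triangleplus_0^\zeta\sigma_1=0$. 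Now the two recorded estimates combine: bounding the difference by $C_\alpha\,||\theta||_p^\alpha\,\langle\zeta\rangle^{m_1-\rho\alpha}$ and the Fourier coefficient by $C_N\,\langle\theta\rangle^{-N}\,\langle\zeta\rangle^{m_2}$, and using that $\langle\theta\rangle=||\theta||_p$ for $\theta\neq 0$ together with the bound $\#\{\theta:||\theta||_p=p^k\}\leq C\,p^k$, the $\theta$-sum is dominated by $C\,\langle\zeta\rangle^{m_1+m_2-\rho\alpha}\sum_{\theta\neq 0}\langle\theta\rangle^{\alpha-N}$, which converges once $N>\alpha+1$ and is then $O(\langle\zeta\rangle^{m_1+m_2-\rho\alpha})$. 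As $\rho>0$ and $\alpha$ is arbitrary this is $O(\langle\zeta\rangle^{-M})$ for every $M$; the same argument applied after $D_x^\gamma$ (which only injects further factors $\langle\theta\rangle^\gamma$, absorbed by enlarging $N$) and after further $\zeta$-differences (controlled by the higher differences of $\sigma_1$ and the $\zeta$-regularity of $\widehat{\sigma_2}$) yields the full $\Tilde{S}^{-\infty}$ estimate, which is exactly $R\in Op(\Tilde{S}^{-\infty}(\Z_p\times\widehat{\Z}_p))$.

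Part (ii) follows the same template once the exact symbols are written out. Dequantizing the adjoint gives
\begin{equation*}
\sigma^*(x,\zeta)=\sum_{\mu\in\widehat{\Z}_p}\chi_p(\mu\cdot x)\,\widehat{\overline{\sigma}}(\mu,\zeta+\mu),
\end{equation*}
and replacing $\zeta+\mu$ by $\zeta$ and resumming in $x$ recovers $\overline{\sigma}(x,\zeta)$; hence
\begin{equation*}
\sigma^*(x,\zeta)-\overline{\sigma}(x,\zeta)=\sum_{\mu\neq 0}\chi_p(\mu\cdot x)\,\big(\triangleplus_\mu^\zeta\,\widehat{\overline{\sigma}}(\mu,\cdot)\big)(\zeta).
\end{equation*}
Here the difference now falls on the frequency slot of $\widehat{\overline{\sigma}}$, but since $\overline{\sigma}\in\Tilde{S}^m_{\rho,0}$ its $\zeta$-differences again gain every power of $\langle\zeta\rangle$, so the identical lattice-sum estimate of part (i) places the remainder in $\Tilde{S}^{-\infty}$. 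The transpose is handled verbatim: its exact symbol has leading term $\sigma$ and a remainder of precisely the same shape, giving $\sigma^t-\sigma\in\Tilde{S}^{-\infty}$. That $T_\sigma^*,T_\sigma^t\in Op(\Tilde{S}^m_{\rho,0})$ is then immediate, their symbols being $\overline{\sigma}$, respectively $\sigma$, modulo a smoothing correction.

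The main obstacle, and the point at which the ultrametric setting genuinely departs from a naive toroidal analogue, is the remainder estimate itself: one must exploit that a \emph{first-order} difference $\triangleplus_\theta^\zeta\sigma_1$ already carries decay of every order $\langle\zeta\rangle^{-\rho\alpha}$, and pair this correctly against the $x$-Fourier decay so that summation over the discrete dual $\widehat{\Z}_p$ produces order $-\infty$ rather than merely finitely many gained orders. This is where the hypotheses are used in an essential way: $\rho>0$ guarantees that each $\alpha$ buys a genuine negative power, and $\delta=0$ guarantees that the $x$-Fourier coefficients decay uniformly in $\zeta$. Verifying that both gains persist under the additional $D_x$- and $\triangleplus$-differentiations needed for membership in $\Tilde{S}^{-\infty}$, and that the character sums converge throughout, is the only substantive computation.
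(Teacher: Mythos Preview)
The paper does not prove this proposition; it is quoted verbatim from \cite[Section 7]{p-adicHormanderclasses}, so there is no in-paper argument to compare against. Your proposal is essentially the standard proof one expects to find in that reference: write the exact symbol of the composition (respectively adjoint, transpose) via dequantization, peel off the leading term, and show the remainder is smoothing by pairing the gain $\|\theta\|_p^{\alpha}\langle\zeta\rangle^{-\rho\alpha}$ from a single $\triangleplus$ against the rapid decay $\langle\theta\rangle^{-N}$ of the $x$-Fourier coefficients coming from $\delta=0$. The mechanism and the use of the hypotheses $\rho>0$, $\delta=0$ are correctly identified.

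Two small points worth tightening. First, the claim that $\sigma_1\sigma_2\in\Tilde{S}^{m_1+m_2}_{\rho,0}$ is ``immediate from Leibniz'' is slightly glib: the operator $D_x^\beta$ is a nonlocal Fourier multiplier, not a pointwise derivation, so the product estimate for $D_x^\beta(\sigma_1\sigma_2)$ does not follow from an algebraic Leibniz rule but from the ultrametric inequality $\|\eta\|_p^\beta\leq\max(\|\mu\|_p,\|\eta-\mu\|_p)^\beta$ applied to the convolution in the $x$-Fourier variable. Second, in the remainder estimate you should note that the bound $|\triangleplus_\theta^\zeta\sigma_1|\leq C_\alpha\|\theta\|_p^{\alpha}\langle\zeta\rangle^{m_1-\rho\alpha}$ is asserted in Definition \ref{hormanderclasses} for \emph{all} $\theta\in\widehat{\Z}_p$, without the restriction $\|\theta\|_p\leq\langle\zeta\rangle$ that appears in the later Vilenkin-group definition; this is what allows the $\theta$-sum to run over the whole dual without a separate treatment of the high-frequency tail. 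With these clarifications the argument is complete.
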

To conclude this section we recall some well known definitions from functional analysis and spectral theory and introduce some notation.
\begin{defi}\normalfont\label{definotation}
Let $E,F$ be a Banach spaces and $T:D(T) \subset E \to F$ be a densely defined linear operator.
\begin{enumerate}
    \item[(i)] We will denote by $\mathcal{L}(E,F)$ the collection of bounded operators form $E$ to $F$. We use the notation $\mathcal{L}(E)$ when $E=F$. $\mathfrak{K}(E,F)$ will denote the collection of compact operators in $\mathcal{L}(E,F)$, and $\mathfrak{R}(E,F)$ will denote the component in $\mathcal{L}(E,F)$ of the radical of the ideal of compact operators in the sense of \cite{operatorsideals}, called the ideal of inessential operators. See \cite{Velasquez-Rodriguez2019}.
    \item[(ii)] For $T \in \mathcal{L}(E)$, $\lambda_k (T)$ will denote the $k$-th eigenvalue of $T$, and $s_k(T)$ its $k$-th $s$-number, for any given $s$-scale. When $E$ is a Hilbert space $s_k(T)$ denotes the $k$-th singular number of $T$.
    \item[(iii)] For a bounded linear operator $T \in \mathcal{L}(E, F)$ we denote by $Res (T)$ and $Res_F (T)$ the resolvent set of $T$ and the Fredholm resolvent set of $T$, respectively, in turn defined as $$Res(T):=\{\lambda \in \C \esp : \esp T- \lambda I \esp \text{is boundedly invertible} \},$$ $$Res_F (T):= \{\lambda \in \C \esp : \esp T- \lambda I \esp \text{is invertible modulo compact operators}\}.$$The Spectrum and the Fredholm spectrum of $T$ are then defined as $$Spec(T):=\C \setminus Res(T) \esp \esp \text{and} \esp \esp Spec_F (T):= \C \setminus Res_F (T). $$ 
    \item[(v)] Throughout this paper we will use sometimes the symbol ``$a \lesssim b$'' to indicate that the quantity ``$a$'' is less or equal than a certain constant times the quantity ``$b$''. 
\end{enumerate}
\end{defi}
\section{$L^r$-\textbf{Sobolev boundedness}}
One of the first natural questions one may ask is whether a pseudo-differential operator extends to a bounded operator between $L^r$-Sobolev spaces. For the $L^r$-boundedness a sufficient condition may be given exploiting the argument in \cite[Theorem 4.8.1]{ruzhansky1}, but for this it will be necessary a Sobolev embedding theorem and a $L^r$-multiplier theorem on $\Z_p$. Let us begin by giving an equivalent definition of Sobolev spaces in the present setting and by providing an embedding lemma. For more information about Sobolev spaces on locally compact metrizable groups see 
\cite{sobolevmetrizablegroups}.

\begin{rem}
Define the modified $L^r$-based Sobolev norm $||\cdot||_{H^s_r (\Z_p^d)}' \esp$ for $f \in H^s_r (\Z_p^d)$ as $$||f||_{H^s_r(\Z_p^d)}' := \Big( \int_{\Z_p^d} |D^s f(x)|^r dx \Big)^{1/r}< \infty.$$
Then the above norm is equivalent to the norm in Definition \ref{defp-adicsobolev} as a trivial consequence of the Littlewood-Paley theory summarised in \cite{LittPaleyQuek}. 
\end{rem}

\begin{lema}\label{Sobolevembedd}
Let $1 < r < \infty$ and $s > d/r$ be given real numbers. Let ${r'}$ be such that $\frac{1}{r} + \frac{1}{{r'}}=1$. Then $H^s_r (\Z_p^d)$ is contained in $L^\infty (\Z_p^d)$ with continuous inclusion.
\end{lema}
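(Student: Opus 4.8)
The plan is to realize the inclusion as the mapping property of a Bessel-type potential and then reduce everything to an integrability estimate for a single explicit kernel. Writing $g := J_s f$, the definition of the Sobolev norm gives $\|g\|_{L^r(\Z_p^d)} = \|f\|_{H^s_r(\Z_p^d)}$, while $f = J_{-s} g$ since $J_s J_{-s}$ is the identity on $\hb^\infty$. Hence it suffices to prove that the Fourier multiplier $J_{-s}$ with symbol $\langle\xi\rangle^{-s}$ maps $L^r(\Z_p^d)$ continuously into $L^\infty(\Z_p^d)$. On a compact abelian group this multiplier is a convolution operator, $J_{-s} g = K_s * g$, where $K_s(x) := \sum_{\xi\in\widehat\Z_p^d}\langle\xi\rangle^{-s}\chi_p(\xi\cdot x)$ is the inverse Fourier transform of $\langle\xi\rangle^{-s}$. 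By H\"older's inequality (equivalently Young's inequality with the normalised Haar measure) one has $\|K_s * g\|_{L^\infty}\le \|K_s\|_{L^{r'}}\|g\|_{L^r}$, so the whole statement reduces to showing $K_s\in L^{r'}(\Z_p^d)$ precisely when $s>d/r$. The kernel method has the advantage of being uniform in $r$, whereas a direct Hausdorff--Young plus H\"older argument only reaches the range $1<r\le 2$.

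The second step is to compute $K_s$ explicitly using the ultrametric structure. Grouping the characters according to the value $||\xi||_p=p^k$ and using Pontryagin duality between the balls $\{||\xi||_p\le p^k\}$ and their annihilators $\{||x||_p\le p^{-k}\}$, one obtains the character-sum identity $\sum_{||\xi||_p\le p^k}\chi_p(\xi\cdot x)=p^{dk}\,\mathbf{1}_{\{||x||_p\le p^{-k}\}}(x)$ (note that $\langle 0\rangle=1$ accounts for the $k=0$ term, since every nonzero $\xi$ has $||\xi||_p\ge p$). A telescoping Abel summation then yields the closed form
\begin{equation*}
K_s(x) = (1-p^{-s})\sum_{k\ge 0} p^{(d-s)k}\,\mathbf{1}_{\{||x||_p\le p^{-k}\}}(x),
\end{equation*}
so that for $||x||_p=p^{-n}$ the kernel equals $(1-p^{-s})\sum_{k=0}^{n}p^{(d-s)k}$. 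Summing this geometric series shows that $K_s(x)$ behaves like $||x||_p^{s-d}$ near the origin when $s<d$, is logarithmic when $s=d$, and is bounded when $s>d$; this is the $p$-adic analogue of the classical Bessel--Riesz potential.

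Finally I would integrate $|K_s|^{r'}$ over the spheres $\{||x||_p=p^{-n}\}$, each of measure $p^{-dn}(1-p^{-d})$. For $s<d$ the $n$-th shell contributes a term comparable to $p^{-dn}\cdot p^{(d-s)r'n}=p^{n((d-s)r'-d)}$, and the resulting geometric series converges if and only if $(d-s)r'<d$, which rearranges exactly to $s>d-d/r'=d/r$; the cases $s\ge d$ are immediate since then $d/r<d\le s$ and $K_s$ is bounded (or logarithmic, hence still in every $L^{r'}$). Collecting constants gives $\|f\|_{L^\infty}\le \|K_s\|_{L^{r'}}\,\|f\|_{H^s_r}$, which is the desired continuous inclusion. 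The only genuinely delicate point is the kernel computation together with its sharp integrability threshold; once the closed form for $K_s$ is in hand the rest is a routine summation of geometric series, and I expect the verification that the convergence threshold matches $s=d/r$ to be the main thing to get right.
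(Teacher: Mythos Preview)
Your argument is correct and genuinely different from the paper's. The paper argues directly on the Fourier side: it bounds $\|f\|_{L^\infty}\le\|\widehat f\|_{\ell^1}$, applies H\"older with exponents $r,r'$ to split off $(\sum_\xi\langle\xi\rangle^{-sr})^{1/r}$, and then invokes the Hausdorff--Young inequality $\|\widehat{J_s f}\|_{\ell^{r'}}\le\|J_s f\|_{L^r}$ to conclude. As you yourself observe, this last step is only valid for $1<r\le 2$, so the paper's proof as written does not actually cover the range $r>2$ it claims. Your kernel approach---writing $J_{-s}$ as convolution with $K_s$, computing $K_s$ explicitly via Abel summation over the dyadic shells, and showing $K_s\in L^{r'}$ exactly when $s>d/r$---works uniformly for all $1<r<\infty$ and in addition identifies the sharp threshold. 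The trade-off is length: the paper's argument is two lines once one accepts Hausdorff--Young, whereas yours requires the kernel computation; but your version is both more complete and gives the precise asymptotics $K_s(x)\asymp\|x\|_p^{\,s-d}$ near the origin, which is of independent interest.
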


\begin{proof}
Just notice that by Hausdorff-Young inequality and the fact that $\Z_p^d$ has finite measure we have  $$||\widehat{f}||_{\ell^{{r'}} (\widehat{\Z}_p^d)} \leq || f||_{L^r (\Z_p^d)},$$for $1<r<\infty$. So we get  \begin{align*}
    ||f||_{L^\infty (\Z_p^d)} \leq ||\widehat{f}||_{\ell^1 (\widehat{\Z}_p^d)} &\leq \big( \sum_{\xi \in \widehat{\Z}_p^d} \langle \xi \rangle^{-sr} \big)^{1/r}||  \langle \xi \rangle^s \widehat{f}||_{\ell^{{r'}} (\widehat{\Z}_p^d)}\\ & \leq \big( \sum_{\xi \in \widehat{\Z}_p^d} \langle \xi \rangle^{-sr} \big)^{1/r} || J_s f||_{L^r (\Z_p^d)},
\end{align*}which conclude the proof.
\end{proof}

The above embedding lemma is enough to prove:

\begin{teo}\label{L2boundedness}
Let $T_\sigma$ be a pseudo-differential operator with symbol $\sigma (x , \xi)$, and let $s \in \R$ a given real number. Assume that $$||D^{\beta + |s|} \sigma (\cdot , \xi )||_{L^r (\Z_p^d)} \lesssim \langle \xi \rangle^{m},$$ for $ \beta >d/r$ and $r=2$. Then $T_\sigma$ extends to a bounded operator from $H^{s+m}_r (\Z_p^d)$ to $H^{s}_r (\Z_p^d)$.
\end{teo}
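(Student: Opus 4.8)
The plan is to reduce the statement, via Plancherel, to the $\ell^2$-boundedness of an infinite matrix, organise that matrix as a superposition of weighted shifts, and finally invoke the embedding Lemma \ref{Sobolevembedd} to convert the $x$-regularity of the symbol into summable decay of its Fourier coefficients.

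First I would reduce to order zero. Since $J_s : H^s_2(\Z_p^d)\to L^2(\Z_p^d)$ and $J_{s+m}:H^{s+m}_2(\Z_p^d)\to L^2(\Z_p^d)$ are isometric isomorphisms, it suffices to show that $J_s T_\sigma J_{-(s+m)}$ extends to a bounded operator on $L^2(\Z_p^d)$. Expanding the symbol in its Fourier series in the first variable, $\sigma(x,\xi)=\sum_{\zeta\in\widehat{\Z}_p^d}\widehat{\sigma}(\zeta,\xi)\chi_p(\zeta\cdot x)$, a direct computation using that the characters form a group gives $\widehat{T_\sigma f}(\eta)=\sum_{\xi}\widehat{\sigma}(\eta-\xi,\xi)\widehat{f}(\xi)$. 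Hence, by Plancherel, the boundedness of $J_s T_\sigma J_{-(s+m)}$ on $L^2$ is equivalent to the boundedness on $\ell^2(\widehat{\Z}_p^d)$ of the matrix
\[
M(\eta,\xi):=\langle\eta\rangle^s\,\widehat{\sigma}(\eta-\xi,\xi)\,\langle\xi\rangle^{-(s+m)}.
\]

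Next I would write $M=\sum_{\zeta}M_\zeta$, where $M_\zeta$ is supported on the shifted diagonal $\eta=\xi+\zeta$. Because $\xi\mapsto\xi+\zeta$ is a bijection of $\widehat{\Z}_p^d$, each $M_\zeta$ is a weighted shift sending $e_\xi$ to $\langle\xi+\zeta\rangle^{s}\widehat{\sigma}(\zeta,\xi)\langle\xi\rangle^{-(s+m)}e_{\xi+\zeta}$, so its $\ell^2\to\ell^2$ norm is the supremum over $\xi$ of the moduli of these weights. Using the $p$-adic (ultrametric) Peetre inequality $\langle\xi+\zeta\rangle^{s}\leq\langle\xi\rangle^{s}\langle\zeta\rangle^{|s|}$ — which is exactly where the extra $|s|$ derivatives in the hypothesis are spent — one gets $\|M_\zeta\|_{\ell^2\to\ell^2}\lesssim\langle\zeta\rangle^{|s|}\sup_\xi\big[\langle\xi\rangle^{-m}|\widehat{\sigma}(\zeta,\xi)|\big]$, and therefore by the triangle inequality $\|M\|_{\ell^2\to\ell^2}\lesssim\sum_\zeta\langle\zeta\rangle^{|s|}\sup_\xi\big[\langle\xi\rangle^{-m}|\widehat{\sigma}(\zeta,\xi)|\big]$.

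The decisive step is to bound this $\zeta$-sum using the hypothesis. By the Littlewood--Paley equivalence recorded in the Remark after Definition \ref{defp-adicsobolev} together with Plancherel in $x$, the assumption $\|D^{\beta+|s|}\sigma(\cdot,\xi)\|_{L^2}\lesssim\langle\xi\rangle^{m}$ is equivalent to $\sum_\zeta\langle\zeta\rangle^{2(\beta+|s|)}|\widehat{\sigma}(\zeta,\xi)|^2\lesssim\langle\xi\rangle^{2m}$, uniformly in $\xi$. Following the proof of Lemma \ref{Sobolevembedd}, a Cauchy--Schwarz splitting $\langle\zeta\rangle^{|s|}=\langle\zeta\rangle^{-\beta}\cdot\langle\zeta\rangle^{\beta+|s|}$ against this $\ell^2$ bound produces the factor $\big(\sum_\zeta\langle\zeta\rangle^{-2\beta}\big)^{1/2}$, which converges precisely because $\beta>d/2=d/r$, the same threshold that drives the embedding lemma. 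I expect the main obstacle to lie exactly here: the supremum over $\xi$ sits \emph{inside} the summation over $\zeta$, whereas the hypothesis controls $\sum_\zeta$ only for each fixed $\xi$, so the supremum and the summation cannot be interchanged for free. Resolving this is what forces one to invest the full order $\beta+|s|$ of regularity in $x$ and to use the embedding threshold $\beta>d/r$; a cruder route, replacing $\widehat{\sigma}(\zeta,\xi)$ by its pointwise bound $\langle\zeta\rangle^{-(\beta+|s|)}\langle\xi\rangle^m$ before summing, would only yield the convergent range $\beta>d$, and the efficiency of the stated hypothesis hinges on avoiding that loss.
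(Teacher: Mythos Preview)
Your reduction to the matrix $M(\eta,\xi)=\langle\eta\rangle^{s}\widehat{\sigma}(\eta-\xi,\xi)\langle\xi\rangle^{-(s+m)}$ is correct, but the step where you bound $\|M\|$ by $\sum_\zeta\|M_\zeta\|$ via the triangle inequality is exactly where the argument breaks, and you yourself flag this without repairing it. That step inevitably produces $\sum_\zeta\sup_\xi\big[\langle\zeta\rangle^{|s|}\langle\xi\rangle^{-m}|\widehat{\sigma}(\zeta,\xi)|\big]$, whereas the hypothesis only controls $\sup_\xi\sum_\zeta\langle\zeta\rangle^{2(\beta+|s|)}|\widehat{\sigma}(\zeta,\xi)|^{2}$. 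With the sup on the inside you are forced to use the pointwise bound $|\widehat{\sigma}(\zeta,\xi)|\lesssim\langle\zeta\rangle^{-(\beta+|s|)}\langle\xi\rangle^{m}$ term by term, and then the $\zeta$-sum converges only for $\beta>d$, not $\beta>d/2$. No amount of Cauchy--Schwarz afterwards undoes this: the triangle inequality on the shifted diagonals has already discarded the almost-orthogonality between the $M_\zeta$.

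The paper avoids the problem by organising the estimate differently. Instead of summing operator norms, it freezes the $x$-variable in the symbol: define $A_y f(x):=\sum_\xi\sigma_A(y,\xi)\widehat{f}(\xi)\chi_p(\xi\cdot x)$, a Fourier multiplier for each fixed $y$, with $A_x f(x)=Af(x)$. Then $\|Af\|_{L^2_x}^{2}\le\int_x\|A_y f(x)\|_{L^\infty_y}^{2}dx$, and Lemma~\ref{Sobolevembedd} in the $y$-variable turns the $L^\infty_y$ into $\|D_y^\beta A_y f(x)\|_{L^2_y}$. After Fubini and Plancherel in $x$ (each $D_y^\beta A_y$ is a multiplier) one obtains $\sum_\xi\|D_y^\beta\sigma_A(\cdot,\xi)\|_{L^2_y}^{2}|\widehat{f}(\xi)|^{2}$, i.e.\ the order $\sup_\xi\sum_\zeta$, which matches the hypothesis after one application of Peetre's inequality. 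The freezing trick is the missing idea in your proposal; your matrix decomposition is not wrong, it is simply too lossy to reach the stated threshold.
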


\begin{proof}
To begin with, $T_\sigma$ extends to a linear operator in $\mathcal{L}(H^{s+m}_2 (\Z_p^d) , H^{s}_2 (\Z_p^d))$ if and only if $A:=J_s T_\sigma J_{- (s+m)}$ extends to a bounded operator on $L^2 (\Z_p^d)$. The Symbol of $A$ is given by $$\sigma_A (x , \xi) = \frac{1}{\langle \xi \rangle^{s+m}}\sum_{\eta \in \widehat{\Z}_p^d} \langle \eta + \xi \rangle^{s} \widehat{\sigma} (\eta , \xi) \chi_p (\eta \cdot x). $$ Now let us define $$A_y f(x) := \sum_{\xi \in \widehat{\Z}_p} \sigma_A (y , \xi) \widehat{f}(\xi) \chi_p (\xi \cdot x ),$$ so $A_x f (x) = Af (x)$. Thus this give us $$||Af||_{L^2 (\Z_p^d)}^2 = \int_{\Z_p^d} |A_x f(x)|^2 dx \leq \int_{\Z_p^d} ||A_y f(x)||^2_{L^\infty_y (\Z_p^d)} dx,$$ which combined with the embedding theorem leads to $$ \int_{\Z_p^d} ||A_y f(x)||^2_{L^\infty_y (\Z_p^d)} dx \lesssim \int_{\Z_p^d} \int_{\Z_p^d} |D_y^\beta A_y f(x)|^2 dy dx.$$Changing the order of integration, which is valid because of Fubini's theorem, we obtain for $f \in Span\{\chi_p (\xi \cdot x)\}_{\xi \in \widehat{\Z}_p^d}$ \begin{align*}
    ||T_\sigma f||^2_{L^2 (\Z_p^d)} &\lesssim \int_{\Z_p^d} \int_{\Z_p^d} |D_y^\beta A_y f(x)|^2 dy dx \\ &=  \int_{\Z_p^d} \int_{\Z_p^d} |D_y^\beta A_y f(x)|^2 dx dy \\ &=  \int_{\Z_p^d}\Big( \sum_{\xi \in \widehat{\Z}_p^d } |D^\beta_y \sigma_A (y , \xi)|^2 |\widehat{f}(\xi)|^2\Big) dy \\ &= \sum_{\xi \in \widehat{\Z}_p^d } \int_{\Z_p^d} |D^\beta_y \sigma_A (y , \xi)|^2 dy |\widehat{f}(\xi)|^2.
\end{align*}
Observe that $$D^\beta_y \sigma_A (y , \xi) = \frac{1}{\langle \xi \rangle^{s+m}}\sum_{\eta \in \widehat{\Z}_p^d} \langle \eta \rangle^\beta \langle \eta + \xi \rangle^{s} \widehat{\sigma} (\eta , \xi) \chi_p (\eta \cdot y), $$ thus using Peetre inequality \begin{align*}
    ||D^\beta_y \sigma_A (y , \xi)||_{L^2_y (\Z_p^d)}^2 &= \frac{1}{\langle \xi \rangle^{2(s+m)}}\sum_{\eta \in \widehat{\Z}_p^d} \langle \eta \rangle^{2\beta} \langle \eta + \xi \rangle^{2s} |\widehat{\sigma}(\eta , \xi)|^2 \\ & \lesssim   \frac{1}{\langle \xi \rangle^{2m}} \sum_{\eta \in \widehat{\Z}_p^d} \langle \eta \rangle^{2(\beta+|s|)}  |\widehat{\sigma}(\eta , \xi)|^2 \\ &\lesssim  \frac{1}{\langle \xi \rangle^{2m}}||D_y^{\beta + |s|} \sigma (y, \xi)||_{L^2_y (\Z_p^d)}^2 \leq C.
\end{align*}Then finally $$||T_\sigma f||_{L^2 (\Z_p^d)} \lesssim ||f||_{L^2 (\Z_p^d)},$$concluding the proof.
\end{proof}
Of course the case $r \neq 2$ is quite more complicated and requires some kind of $L^r$-multiplier theorem. We will provide two sufficient conditions for $L^r$-boundedness using $\Lambda_r$-sets in the sense of \cite[Chapter 5]{HandbookV1} and the $L^r$-multiplier theorems given in \cite{LittPaleyQuek}.

\begin{defi}
A subset $\Lambda$ of the dual group $\widehat{\Z}_p^d$ is called a $\Lambda_r$-set, $1 <r<\infty $, if for some constant $ C_\Lambda $ and
all complex valued functions $\varphi: \widehat{\Z}_p^d \to \C$ we have the inequality$$\big| \big| \sum_{\xi \in \Lambda} \varphi (\xi) \chi_p (\xi \cdot x) \big| \big|_{L^r (\Z_p^d)} \leq C_\Lambda \Big( \sum_{\xi \in \Lambda} | \varphi (\xi) |^2 \Big)^{1/2}, \esp \esp \text{if} \esp r>2; $$  $$\big| \big| \sum_{\xi \in \Lambda} \varphi (\xi) \chi_p (\xi \cdot x) \big| \big|_{L^r (\Z_p^d)} \leq C_\Lambda\big| \big| \sum_{\xi \in \Lambda} \varphi (\xi) \chi_p (\xi \cdot x) \big| \big|_{L^1 (\Z_p^d)}, \esp \esp \text{if} \esp r \leq 2 $$
\end{defi} With the above definition an Theorem \ref{L2boundedness} we can prove the following sufficient condition for $L^r$-Sobolev boundedness.

\begin{coro}\label{LpboundednessSidon}
Let $T_\sigma$ be a pseudo-differential operator with symbol $\sigma(x , \xi)$. Let $\Lambda$ be a $\Lambda_r$-set for some $2<r<\infty$. Assume that $\sigma$ satisfies the hypothesis in Theorem \ref{L2boundedness}  and for all $x \in \Z_p$, $Supp \esp \sigma_x (\xi) \subseteq \Lambda$, where $\sigma_x (\xi) := \sigma (x,\xi)$. Then $T_\sigma$ extends to a bounded operator from $H^{s+m}_r (\Z_p^d)$ to $H^{s}_r (\Z_p^d)$.
\end{coro}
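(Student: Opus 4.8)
The plan is to mirror the proof of Theorem \ref{L2boundedness}, replacing Plancherel's identity by the $\Lambda_r$-set inequality at the single place where the Hilbert-space structure was used. First I would reduce the statement to the $L^r(\Z_p^d)$-boundedness of the conjugated operator $A:=J_s T_\sigma J_{-(s+m)}$: since $J_a$ is the Fourier multiplier with symbol $\langle\xi\rangle^a$ and $J_aJ_b=J_{a+b}$, the maps $J_{-(s+m)}:L^r\to H^{s+m}_r$ and $J_s:H^s_r\to L^r$ are isometric isomorphisms, so $T_\sigma\in\mathcal{L}(H^{s+m}_r,H^s_r)$ if and only if $A\in\mathcal{L}(L^r)$. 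The symbol $\sigma_A(x,\xi)$ is exactly the one computed in Theorem \ref{L2boundedness}, and the hypothesis $\mathrm{Supp}\,\sigma_x\subseteq\Lambda$ forces $\widehat{\sigma}(\eta,\xi)=0$ whenever $\xi\notin\Lambda$; hence $\sigma_A(y,\cdot)$ is supported in $\Lambda$ for every $y$, and so each frozen multiplier $A_yf(x):=\sum_{\xi}\sigma_A(y,\xi)\widehat f(\xi)\chi_p(\xi\cdot x)$ produces a function of $x$ with spectrum contained in $\Lambda$.

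Next I would run the freezing/diagonal argument as before. Writing $Af(x)=A_xf(x)$ and dominating the diagonal by the supremum in the frozen variable, the Sobolev embedding of Lemma \ref{Sobolevembedd} in the variable $y$ (legitimate because $\beta>d/r$, which follows from $\beta>d/2$) gives
\begin{equation*}
\|Af\|_{L^r(\Z_p^d)}^r\leq\int_{\Z_p^d}\|A_yf(x)\|^r_{L^\infty_y(\Z_p^d)}\,dx\lesssim\int_{\Z_p^d}\int_{\Z_p^d}|D_y^\beta A_yf(x)|^r\,dy\,dx.
\end{equation*}
After interchanging the order of integration by Fubini, the inner norm is the $L^r_x$-norm of $x\mapsto D_y^\beta A_yf(x)=\sum_{\xi\in\Lambda}D_y^\beta\sigma_A(y,\xi)\widehat f(\xi)\chi_p(\xi\cdot x)$, whose spectrum lies in $\Lambda$. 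This is precisely where I would invoke the $\Lambda_r$-set property in place of Plancherel: for each fixed $y$,
\begin{equation*}
\|D_y^\beta A_yf\|_{L^r_x}\leq C_\Lambda\Big(\sum_{\xi\in\Lambda}|D_y^\beta\sigma_A(y,\xi)|^2\,|\widehat f(\xi)|^2\Big)^{1/2}.
\end{equation*}

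Raising to the $r$-th power and integrating in $y$, I would then apply Minkowski's integral inequality with exponent $r/2\geq1$ to pull the $\ell^2_\xi$-sum outside the integral, reducing the whole estimate to the single uniform symbol bound $\sup_\xi\|D_y^\beta\sigma_A(\cdot,\xi)\|_{L^r_y}<\infty$. Granting it, the sum collapses to $\big(\sum_\xi|\widehat f(\xi)|^2\big)^{r/2}=\|f\|_{L^2}^r$, and since $r>2$ and $\Z_p^d$ carries a probability Haar measure we conclude $\|Af\|_{L^r}\lesssim\|f\|_{L^2}\leq\|f\|_{L^r}$, i.e. $A\in\mathcal{L}(L^r)$.

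The hard part is the uniform bound $\|D_y^\beta\sigma_A(\cdot,\xi)\|_{L^r_y}\lesssim1$, which in the $L^2$ proof was free from Plancherel together with Peetre's inequality but now cannot be, since the $\eta$-spectrum of $\sigma_A(\cdot,\xi)$ is unrestricted. Writing $D_y^\beta\sigma_A(\cdot,\xi)=\langle\xi\rangle^{-(s+m)}\,M_\xi[\sigma(\cdot,\xi)]$, where $M_\xi$ is the Fourier multiplier of symbol $\langle\eta\rangle^\beta\langle\xi+\eta\rangle^s$, I would factor $M_\xi=\big(M_\xi J_{\beta+|s|}^{-1}\big)J_{\beta+|s|}$ and observe, via Peetre's inequality, that the symbol $\langle\xi+\eta\rangle^s\langle\eta\rangle^{-|s|}$ of $M_\xi J_{\beta+|s|}^{-1}$ has size $O(\langle\xi\rangle^s)$ uniformly in $\eta$. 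The crux is then to promote this pointwise symbol bound to a uniform $L^r$-operator-norm bound $\|M_\xi J_{\beta+|s|}^{-1}\|_{\mathcal{L}(L^r)}\lesssim\langle\xi\rangle^s$: the pointwise control that sufficed in the Hilbert-space case no longer controls the $L^r$-norm, so one must invoke the $L^r$-multiplier (Littlewood--Paley) theorems of \cite{LittPaleyQuek}. Combined with the $L^r$ form of the symbol estimate $\|D^{\beta+|s|}\sigma(\cdot,\xi)\|_{L^r}\lesssim\langle\xi\rangle^m$ assumed in Theorem \ref{L2boundedness}, this yields $\|D_y^\beta\sigma_A(\cdot,\xi)\|_{L^r_y}\lesssim\langle\xi\rangle^{-(s+m)}\langle\xi\rangle^s\langle\xi\rangle^m=1$, closing the argument.
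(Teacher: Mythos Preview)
Your argument tracks the paper's proof closely through the reduction to $A=J_sT_\sigma J_{-(s+m)}$, the freezing $A_xf(x)=Af(x)$, the Sobolev embedding in the frozen variable, Fubini, and the application of the $\Lambda_r$-inequality to the inner $L^r_x$-norm. From that point on the two diverge slightly. The paper does not use Minkowski: it simply bounds the $\ell^2$-sum by $\sup_{\xi\in\Lambda}|D_y^\beta\sigma_A(y,\xi)|\cdot\|f\|_{L^2}$ pointwise in $y$, then integrates in $y$ and stops, recording $\sup_{\xi\in\Lambda}\int_{\Z_p^d}|D_y^\beta\sigma_A(y,\xi)|^r\,dy\cdot\|f\|_{L^r}^r$ as the final estimate without any further justification of its finiteness. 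In particular, the paper does not carry out anything resembling your ``hard part''.

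Your Minkowski step is a legitimate (and cleaner) alternative to the pointwise-sup bound, and it leads to the same quantity $\sup_\xi\|D_y^\beta\sigma_A(\cdot,\xi)\|_{L^r_y}$. However, your proposed justification of that quantity has a gap: the Fourier multiplier $M_\xi J_{\beta+|s|}^{-1}$ has symbol $\eta\mapsto\langle\xi+\eta\rangle^{s}\langle\eta\rangle^{-|s|}$, which is \emph{not} of the radial form $\varphi(\|\eta\|_p)$, so the Littlewood--Paley result of \cite{LittPaleyQuek} quoted in Lemma~\ref{littlewoodpaley} and Corollary~\ref{Lrmultiplier} does not apply directly. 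A uniform $L^r$-bound for this family would require a genuine H{\"o}rmander--Mikhlin type multiplier theorem (e.g.\ the one in \cite{pseudosvinlekin}), together with a check that the relevant difference conditions hold uniformly in $\xi$; you have not done this. The paper sidesteps the issue entirely by leaving the bound implicit, so your route is more honest but, as written, not yet complete.
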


\begin{proof}
For $f \in Span\{\chi_p (\xi \cdot x) \}_{\xi \in \widehat{\Z}_p^d}$ define $A$, $\sigma_A$ and $A_y f (x)$ as in the proof of Theorem \ref{L2boundedness}. Using again Lemma \ref{Sobolevembedd} and Fubini's theorem we obtain \begin{align*}
    \int_{\Z_p^d} |T_\sigma f(x)|^r dx &\leq \int_{\Z_p^d } ||A_y f(x) ||_{L_y^\infty(\Z_p^d)}^r dx \\&\lesssim \int_{\Z_p^d} \int_{\Z_p^d} | D^{\beta}_y A_y f(x)|^r dy dx \\&\lesssim \int_{\Z_p^d} \int_{\Z_p^d} | D^{\beta}_y A_y f(x)|^r dx dy. 
\end{align*}Using $Supp \esp  \sigma_y (\xi) \subseteq \Lambda$ (our replacement of a suitable multiplier theorem) we get $$\int_{\Z_p^d} | D^{\beta}_y A_y f(x)|^r dx \leq C_\Lambda \Big( \sum_{\xi \in \Lambda } \Big |D^\beta_y \sigma_A (y , \xi)|^2 | \widehat{f} (\xi)|^2 \Big)^{r/2} \leq C_\Lambda \sup_{\xi \in \Lambda} |D^\beta_y \sigma_A (y , \xi)|^r ||f||^r_{L^2 (\Z_p^d)}.$$Finally, since $||f||_{L^2 (\Z_p^d)} \leq ||f||_{L^r (\Z_p^d)}$, the next inequality follows $$\int_{\Z_p^d} |T_\sigma f(x)|^p dx \lesssim \int_{\Z_p^d} \int_{\Z_p^d} | D^{\beta}_y A_y f(x)|^r dx dy \lesssim \sup_{\xi \in \Lambda} \int_{\Z_p^d} |D^\beta_y \sigma_A (y , \xi)|^r dy ||f||^r_{L^r (\Z_p^d)}, $$ concluding the proof. 
\end{proof}
The above corollary is an straigthfoward consequence of the main property of $\Lambda_r$ sets, that is, they make the $L^r$-norm an equivalent norm to the $L^2$-norm. In this way an $L^2$-estimative may be extended to an $L^r$-estimative. Another technique for treat the $L^r$-case trough $L^2$-methods is the well known Littlewood-Paley theory. It is a well developed theory in $\R^d$ and $\T^d$ that have been extended to the case of locally compact Vilekin groups in \cite{LittPaleyQuek}. Here we are interested mainly in \cite[Theorem 1]{LittPaleyQuek} which is a version of \cite{littlwoodpaley} adapted to Vilekin groups. For simplicity we state a version of \cite[Theorem 1]{LittPaleyQuek} for the compact group of $p$-adic integers.

\begin{lema}[T.S. Quek \cite{LittPaleyQuek}]\label{littlewoodpaley}
Let $1<r<\infty$ be a given real number. Then there exist positive constants $B_r , C_r$ such that $$B_r ||f||_{L^r (\Z_p^d)} \leq ||Sf||_{L^r (\Z_p^d)} \leq C_r ||f||_{L^r (\Z_p^d)}, \esp \esp \esp f \in C^\infty (\Z_p),$$where $$Sf(x):= \Big( \sum_{j \in \N} \big|\sum_{\xi \in \widehat{\Z}_p^d , \esp ||\xi||_p = p^j } \widehat{f}(\xi) \chi_p (\xi \cdot x) \big|^2\Big)^{1/2}.$$
\end{lema}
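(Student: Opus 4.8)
The plan is to recognise the dyadic frequency decomposition defining $Sf$ as a \emph{martingale} square function over $\Z_p^d$ and to read off the two-sided inequality from the classical Burkholder--Gundy $L^r$ estimates; this is exactly the mechanism underlying \cite[Theorem 1]{LittPaleyQuek}, here specialised to the compact group $\Z_p^d$.

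First I would fix the natural filtration of $\Z_p^d$ by the compact open subgroups $p^j \Z_p^d$, $j \geq 0$, and let $\mathcal{F}_j$ denote the finite $\sigma$-algebra generated by their cosets; these are increasing and generate the Borel $\sigma$-algebra. The decisive algebraic observation is that a character $\chi_p(\xi \cdot x)$ is constant on the cosets of $p^j\Z_p^d$ precisely when $\|\xi\|_p \leq p^j$, since the condition $\chi_p(\xi \cdot p^j y)=1$ for all $y \in \Z_p^d$ is equivalent to $p^j \xi \in \Z_p^d$. As the characters $\{\chi_p(\xi\cdot x): \|\xi\|_p \leq p^j\}$ are orthonormal, $\mathcal{F}_j$-measurable, and equal in number to the $p^{jd}$ cosets of $p^j\Z_p^d$, they form an orthonormal basis of the space of $\mathcal{F}_j$-measurable functions. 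Hence the partial sum $P_j f := \sum_{\|\xi\|_p \leq p^j} \widehat{f}(\xi)\chi_p(\xi\cdot x)$ is precisely the conditional expectation $\mathbb{E}[f \mid \mathcal{F}_j]$, so that $(P_j f)_{j\ge0}$ is a martingale whose increment $P_j f - P_{j-1}f = \sum_{\|\xi\|_p = p^j}\widehat{f}(\xi)\chi_p(\xi\cdot x)$ is exactly the $j$-th Littlewood--Paley block.

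With this identification $Sf$ becomes the square function of the martingale $(P_j f)_j$, and the claimed estimates are precisely Burkholder's square-function inequalities, valid for every $1<r<\infty$. The upper bound $\|Sf\|_{L^r(\Z_p^d)} \leq C_r\|f\|_{L^r(\Z_p^d)}$ follows immediately, while the Burkholder--Gundy lower bound controls the mean-zero part, giving $\|f-\widehat{f}(0)\|_{L^r(\Z_p^d)} \leq B_r^{-1}\|Sf\|_{L^r(\Z_p^d)}$. Since the block sum starts at $j=1$, the constant Fourier mode $P_0 f = \widehat{f}(0)$ is invisible to $Sf$, so the left-hand inequality in the statement should be read modulo this mode (equivalently, on the kernel of $\mathbb{E}$, or after adjoining $|\widehat{f}(0)|^2$ to $Sf^2$). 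The step I expect to need the most care is therefore not the analytic core---Burkholder's theorem can be quoted once the martingale structure is in place---but the bookkeeping that glues the two pictures together: checking that $\{\xi : \|\xi\|_p \le p^j\}$ is exactly the character group of the finite quotient $\Z_p^d/p^j\Z_p^d$, so that $P_j=\mathbb{E}[\,\cdot\mid\mathcal{F}_j]$ on $C^\infty(\Z_p^d)$, and then tracking the lowest-frequency term correctly across the two-sided bound. A shorter alternative, if one is content to cite the literature, is to observe that $\Z_p^d$ is a compact Vilenkin group whose defining chain $(p^j\Z_p^d)_{j\ge0}$ has annihilator chain realising precisely the blocks $\|\xi\|_p=p^j$, and to invoke \cite[Theorem 1]{LittPaleyQuek} directly.
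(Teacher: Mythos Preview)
The paper does not supply its own proof of this lemma; it is stated as a quotation of \cite[Theorem~1]{LittPaleyQuek}, specialised to $\Z_p^d$, and used thereafter as a black box. Your martingale argument is a correct and standard route to the inequality: the identification $P_j f=\mathbb{E}[f\mid\mathcal{F}_j]$ via the fact that $\{\chi_p(\xi\cdot x):\|\xi\|_p\le p^j\}$ is an orthonormal basis of the $\mathcal{F}_j$-measurable functions is exactly what makes Burkholder--Gundy applicable, and this is indeed the mechanism behind the Vilenkin-group Littlewood--Paley theory. Your ``shorter alternative'' of simply invoking \cite[Theorem~1]{LittPaleyQuek} is precisely what the paper does.

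You are also right to flag the constant mode: since for nonzero $\xi\in\widehat{\Z}_p^d$ one has $\|\xi\|_p\in\{p,p^2,\dots\}$, the sum over $j\in\N$ omits only $\xi=0$, so $Sf$ vanishes on constants and the lower bound as literally printed fails there. The statement should be read modulo constants (or with $|\widehat f(0)|^2$ adjoined to $Sf^2$), and the paper is slightly imprecise on this point; your bookkeeping is more careful than the text you are checking.
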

An immediate consequence of this equivalence of norms is the following multiplier theorem.
\begin{coro}\label{Lrmultiplier}
Let $T_\sigma$ be a Fourier multiplier with symbol $\sigma(\xi):=\sigma(||\xi||_p)$. Then $T_\sigma: L^r (\Z_p^d) \to L^r (\Z_p^d)$ extends to a bounded operator if and only if $\sigma \in \ell^\infty (\widehat{\Z}_p^d)$. In consecuence the spectrum of the Fourier multiplier $T_\sigma$ in $L^r(\Z_p^d)$ is given by $$Spec(T_\sigma) = \{\sigma(||\xi||_p) \esp : \esp \xi \in \widehat{\Z}_p^d \}.$$Moreover $T_\sigma$ defines an operator aproximmable by finite rank operators, and then a Compact operator in $L^r (\Z_p^d)$, if and only if $$\lim_{||\xi||_p \to \infty} |\sigma(||\xi||_p)| =0.$$
\end{coro}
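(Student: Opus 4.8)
The plan is to exploit that $\sigma$ is \emph{radial}, i.e. constant on each Littlewood--Paley shell $\{\xi \in \widehat{\Z}_p^d : ||\xi||_p = p^j\}$, so that $T_\sigma$ is diagonalised by the $p$-adic decomposition underlying Lemma \ref{littlewoodpaley}. Write $P_j f(x) := \sum_{||\xi||_p = p^j} \widehat{f}(\xi)\chi_p(\xi\cdot x)$ for the projection onto the $j$-th shell, together with the harmless rank-one projection onto the zero mode $\xi = 0$. Since $\sigma$ takes the single value $\sigma(p^j)$ on the $j$-th shell, one has $T_\sigma P_j = P_j T_\sigma = \sigma(p^j) P_j$ for every $j$, i.e. $T_\sigma = \sum_j \sigma(p^j) P_j$. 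Consequently $S(T_\sigma f) = \big(\sum_j |\sigma(p^j)|^2 |P_j f|^2\big)^{1/2}$, and the whole corollary follows by transferring estimates between $||T_\sigma f||_{L^r}$ and $||S(T_\sigma f)||_{L^r}$ through the two-sided inequality of Lemma \ref{littlewoodpaley}.

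For boundedness, the sufficiency of $\sigma \in \ell^\infty$ is immediate from the pointwise domination $S(T_\sigma f) \leq ||\sigma||_{\ell^\infty}\, Sf$: applying the lower bound of Lemma \ref{littlewoodpaley} to $T_\sigma f$ and then the upper bound to $f$ yields
$$||T_\sigma f||_{L^r(\Z_p^d)} \leq B_r^{-1}||S(T_\sigma f)||_{L^r(\Z_p^d)} \leq B_r^{-1}||\sigma||_{\ell^\infty}\,||Sf||_{L^r(\Z_p^d)} \leq B_r^{-1}C_r||\sigma||_{\ell^\infty}\,||f||_{L^r(\Z_p^d)}.$$
For the necessity I would test $T_\sigma$ on a single character $f = \chi_p(\xi\cdot x)$ with $||\xi||_p = p^j$; since $||\chi_p(\xi\cdot x)||_{L^r(\Z_p^d)} = 1$ and $T_\sigma \chi_p(\xi\cdot x) = \sigma(p^j)\chi_p(\xi\cdot x)$, boundedness forces $|\sigma(p^j)| \leq ||T_\sigma||_{\mathcal{L}(L^r(\Z_p^d))}$ for all $j$, hence $\sigma \in \ell^\infty(\widehat{\Z}_p^d)$.

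For the spectrum, observe that $T_\sigma - \lambda I$ is again a radial multiplier, with symbol $\sigma(p^j) - \lambda$; by the boundedness criterion just established its formal inverse, the multiplier with symbol $(\sigma(p^j)-\lambda)^{-1}$, is bounded precisely when $\inf_j |\sigma(p^j)-\lambda| > 0$, that is, when $\lambda$ lies at positive distance from the range of $\sigma$. Conversely each value $\sigma(p^j)$ is a genuine eigenvalue, with the characters on the $j$-th shell as eigenfunctions, so it belongs to $Spec(T_\sigma)$. Since the spectrum is closed this identifies $Spec(T_\sigma)$ with the closure of $\{\sigma(||\xi||_p): \xi\in\widehat{\Z}_p^d\}$; the displayed set is to be read with this closure when the range of $\sigma$ fails to be closed.

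Finally, for compactness I would argue approximability directly. Given $\lim_{||\xi||_p\to\infty}|\sigma(||\xi||_p)| = 0$, let $T_\sigma^{(N)}$ be the multiplier obtained by truncating $\sigma$ to the finitely many frequencies with $||\xi||_p \leq p^N$; each $T_\sigma^{(N)}$ has finite rank, and $T_\sigma - T_\sigma^{(N)}$ is the radial multiplier whose symbol is supported on $||\xi||_p > p^N$, so by the boundedness estimate $||T_\sigma - T_\sigma^{(N)}||_{\mathcal{L}(L^r(\Z_p^d))} \leq B_r^{-1}C_r \sup_{||\xi||_p>p^N}|\sigma(||\xi||_p)| \to 0$. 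Thus $T_\sigma$ is a norm limit of finite rank operators, hence compact. For the converse I would use that the characters $\chi_p(\xi_n\cdot x)$ with $||\xi_n||_p\to\infty$ tend weakly to $0$ in $L^r(\Z_p^d)$ — their pairing against any $g\in L^{r'}(\Z_p^d)\subseteq L^1(\Z_p^d)$ is a Fourier coefficient of $g$, which vanishes at infinity by Riemann--Lebesgue — so compactness of $T_\sigma$ forces $||T_\sigma\chi_p(\xi_n\cdot x)||_{L^r(\Z_p^d)} = |\sigma(||\xi_n||_p)| \to 0$ along every such sequence, giving $\lim_{||\xi||_p\to\infty}|\sigma(||\xi||_p)| = 0$. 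The only genuinely delicate points are this weak-to-norm convergence step and the closure subtlety for the spectrum; everything else is a direct transcription through Lemma \ref{littlewoodpaley}.
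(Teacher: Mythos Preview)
Your proposal is correct and follows the same route the paper intends: the corollary is stated there as ``an immediate consequence'' of the Littlewood--Paley equivalence (Lemma~\ref{littlewoodpaley}), and you have simply written out that immediate consequence in full, together with the standard testing-on-characters and Riemann--Lebesgue arguments for the necessity directions. Your remark that the displayed spectral formula should carry a closure is well taken; this is a minor imprecision in the statement rather than a flaw in your argument.
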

And with the above version of Littlewood-Paley theorem and the same proof as in Theorem \ref{L2boundedness} we obtain:

\begin{coro}\label{lrboundedness}
Let $T_\sigma$ be a pseudo-differential operator with symbol $\sigma(x,||\xi||_p)$ and $s \in \R$ a given real number. Assume that $$||D^{\beta + |s|} \sigma (\cdot , ||\xi||_p )||_{L^r (\Z_p^d)} \lesssim \langle \xi \rangle^{
m},$$ for $1 < r < \infty$ and $ \beta >d/r$. Then $T_\sigma$ extends to a bounded operator from $H^{s+m}_r (\Z_p^d)$ to $H^{s}_r (\Z_p^d)$.  
\end{coro}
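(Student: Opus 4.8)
The plan is to follow the proof of Theorem \ref{L2boundedness} line by line, replacing its single appeal to Plancherel's identity by the Littlewood--Paley equivalence of Lemma \ref{littlewoodpaley}. First I would reduce the statement to the $L^r(\Z_p^d)$-boundedness of $A:=J_s T_\sigma J_{-(s+m)}$, whose symbol is $\sigma_A(x,\xi)=\langle\xi\rangle^{-(s+m)}\sum_{\eta\in\widehat{\Z}_p^d}\langle\xi+\eta\rangle^s\widehat{\sigma}(\eta,\|\xi\|_p)\chi_p(\eta\cdot x)$, exactly as there, the radial hypothesis $\sigma(x,\xi)=\sigma(x,\|\xi\|_p)$ being carried by $\widehat{\sigma}(\eta,\|\xi\|_p)$. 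Freezing the symbol to define $A_y$, the Sobolev embedding Lemma \ref{Sobolevembedd} (applied in the frozen variable $y$, which is legitimate because $\beta>d/r$) together with Fubini's theorem reduce the problem to proving $\int_{\Z_p^d}\|D_y^\beta A_y f\|_{L^r_x(\Z_p^d)}^r\,dy\lesssim\|f\|_{L^r(\Z_p^d)}^r$ for $f\in\hb^\infty$.

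Next, for each fixed $y$ the map $f\mapsto D_y^\beta A_y f$ is a Fourier multiplier with symbol $\xi\mapsto D_y^\beta\sigma_A(y,\xi)$, and this is where the $L^r$ machinery enters. Because $\sigma$ depends on $\xi$ only through $\|\xi\|_p$, this multiplier is --- up to the factor $\langle\xi+\eta\rangle^s$ carried by $J_s$, and exactly so when $s=0$ --- constant on each Littlewood--Paley shell $\{\|\xi\|_p=p^j\}$, hence commutes with the projections building the square function $S$ of Lemma \ref{littlewoodpaley}. Granting this, writing $c_j(y)$ for the common shell value one gets the pointwise domination $S(D_y^\beta A_y f)(x)\le\big(\sup_j|c_j(y)|\big)\,Sf(x)$, and two applications of Lemma \ref{littlewoodpaley} yield $\|D_y^\beta A_y f\|_{L^r_x}\lesssim\big(\sup_j|c_j(y)|\big)\,\|f\|_{L^r}$.

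Finally I would insert this into the $dy$-integral and estimate $\sup_j|c_j(y)|$ by repeating the Peetre-inequality computation at the end of Theorem \ref{L2boundedness}: bounding $\langle\xi+\eta\rangle^s\lesssim\langle\eta\rangle^{|s|}\langle\xi\rangle^s$ converts the hypothesis $\|D^{\beta+|s|}\sigma(\cdot,\|\xi\|_p)\|_{L^r}\lesssim\langle\xi\rangle^m$ into a bound for $\|c_j\|_{L^r_y}$ uniform in $j$, from which the required control of $\int_{\Z_p^d}\sup_j|c_j(y)|^r\,dy$ and hence $\|T_\sigma f\|_{H^s_r}\lesssim\|f\|_{H^{s+m}_r}$ should follow. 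The main obstacle is precisely the legitimacy of this shell-diagonalisation: the conjugated symbol $\sigma_A$ is only genuinely radial when $s=0$, since the factor $\langle\xi+\eta\rangle^s$ spoils constancy on shells for the terms with $\|\eta\|_p=\|\xi\|_p$. The careful argument must therefore first dominate $\sigma_A$ by a radial majorant (constant on shells) via the ultrametric Peetre inequality before factoring the square function, and must justify the exchange of $\sup_j$ with the $L^r_y$-integral; this is the only step that requires more than a verbatim repetition of Theorem \ref{L2boundedness}.
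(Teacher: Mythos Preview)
Your proposal follows essentially the same route as the paper: reduce to the $L^r$-boundedness of $A=J_sT_\sigma J_{-(s+m)}$, freeze the first variable and apply the Sobolev embedding of Lemma~\ref{Sobolevembedd}, then replace the Plancherel step of Theorem~\ref{L2boundedness} by the Littlewood--Paley equivalence of Lemma~\ref{littlewoodpaley}, factor the shell-constant multiplier out of the square function, and finish with the Peetre-inequality estimate. The paper's proof is exactly this chain of inequalities, written out in four displayed lines.

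In fact you have been more scrupulous than the paper's own argument on the two technical points you flag. The paper simply writes $\sigma_A(y,\|\xi\|_p)$ as if the conjugated symbol were radial, silently absorbing the non-radial factor $\langle\xi+\eta\rangle^s$; and in passing from the second to the third displayed line it replaces $\int_{\Z_p^d}\sup_\xi|D_y^\beta\sigma_A(y,\|\xi\|_p)|^r\,dy$ by $\sup_\xi\int_{\Z_p^d}|D_y^\beta\sigma_A(y,\|\xi\|_p)|^r\,dy$ without comment. Your plan to dominate by a radial majorant via the ultrametric Peetre inequality \emph{before} invoking the square-function factorisation is the natural way to make both steps honest, and it does not deviate from the paper's intended argument.
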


\begin{proof}
\begin{align*}
    \int_{\Z_p^d} |T_\sigma f(x)|^r dx &\lesssim \int_{\Z_p^d} \int_{\Z_p^d} | D^{\beta}_y A_y f(x)|^r dx dy \\& \lesssim \int_{\Z_p^d} \int_{\Z_p^d }\Big( \sum_{j \in \N} \big|\sum_{\xi \in \widehat{\Z}_p^d , \esp ||\xi||_p = p^j } D_y^\beta \sigma_A(y , ||\xi||_p) \widehat{f}(\xi) \chi_p (\xi \cdot x) \big|^2\Big)^{r/2} dx dy \\ & \lesssim \sup_{\xi \in \widehat{\Z}_p^d} \int_{\Z_p^d} |D_y^\beta \sigma_A(y , ||\xi||_p)|^r dy \int_{\Z_p^d }\Big( \sum_{j \in \N} \big|\sum_{\xi \in \widehat{\Z}_p^d , \esp ||\xi||_p = p^j }  \widehat{f}(\xi) \chi_p (\xi \cdot x) \big|^2\Big)^{r/2} dx \\ & \lesssim \sup_{\xi \in \widehat{\Z}_p^d} \int_{\Z_p^d} |D_y^\beta \sigma_A(y , ||\xi||_p)|^r dy||f||_{L^r (\Z_p^d)}^r.
\end{align*}
\end{proof}
To conclude this section we state the fact that operators in the H{\"o}rmander classes given in Definition \ref{hormanderclasses} extend to bounded operators between Sobolev spaces. This fact is easily deduced with a proof as the above and the multiplier theorem \cite[Theorem III.1]{pseudosvinlekin}.

\begin{teo}
Let $1<r < \infty$ be a given real number. Let $T_\sigma$ be a pseudo-differential operator with symbol $\sigma \in \Tilde{S}^m_{1 , 0} (\Z_p^d \times \widehat{\Z}_p^d)$ and let $s \in \R$ be a given real number. Then $T_\sigma$ extends to a bounded operator from $H^{s+m}_r (\Z_p^d)$ to $H^{s}_r (\Z_p^d)$.
\end{teo}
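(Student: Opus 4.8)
The plan is to reduce the Sobolev estimate to a plain $L^r$-boundedness statement, exactly as in the proof of Theorem \ref{L2boundedness}, and then to replace the \emph{ad hoc} tools used there (Plancherel, $\Lambda_r$-sets, Littlewood--Paley) by the multiplier theorem \cite[Theorem III.1]{pseudosvinlekin}. First I would observe that, by the very definition of the Sobolev norm, $J_{-(s+m)}\colon L^r(\Z_p^d)\to H^{s+m}_r(\Z_p^d)$ and $J_s\colon H^{s}_r(\Z_p^d)\to L^r(\Z_p^d)$ are isometric isomorphisms, whence $T_\sigma\in\mathcal{L}(H^{s+m}_r(\Z_p^d),H^{s}_r(\Z_p^d))$ if and only if $A:=J_s\,T_\sigma\,J_{-(s+m)}$ extends to a bounded operator on $L^r(\Z_p^d)$. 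Its symbol is the one already computed in Theorem \ref{L2boundedness},
\[
\sigma_A(x,\xi)=\frac{1}{\langle\xi\rangle^{s+m}}\sum_{\eta\in\widehat{\Z}_p^d}\langle\eta+\xi\rangle^{s}\,\widehat{\sigma}(\eta,\xi)\,\chi_p(\eta\cdot x).
\]
The structural point is that $\sigma_A\in\Tilde{S}^0_{1,0}(\Z_p^d\times\widehat{\Z}_p^d)$: since $\langle\xi\rangle^{s}$ and $\langle\xi\rangle^{-(s+m)}$ are Fourier multipliers lying in $\Tilde{S}^{s}_{1,0}$ and $\Tilde{S}^{-(s+m)}_{1,0}$ respectively, the composition rule of the Proposition recalled above (applied twice, with $\rho=1$) places $A$ in $Op(\Tilde{S}^{0}_{1,0})$ modulo a smoothing operator, which is harmless since operators in $Op(\Tilde{S}^{-\infty})$ are $L^r$-bounded. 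Alternatively one may read this order-zero estimate directly off the displayed formula.

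Next I would reproduce the chain of estimates of Theorem \ref{L2boundedness} and Corollary \ref{lrboundedness}. Fixing $\beta>d/r$ and setting $A_y f(x):=\sum_{\xi\in\widehat{\Z}_p^d}\sigma_A(y,\xi)\widehat{f}(\xi)\chi_p(\xi\cdot x)$ so that $A_x f(x)=Af(x)$, the Sobolev embedding of Lemma \ref{Sobolevembedd} in the variable $y$ together with Fubini's theorem gives
\[
\|Af\|_{L^r(\Z_p^d)}^r\leq\int_{\Z_p^d}\|A_y f(x)\|_{L^\infty_y(\Z_p^d)}^r\,dx\lesssim\int_{\Z_p^d}\int_{\Z_p^d}|D_y^{\beta}A_y f(x)|^r\,dx\,dy.
\]
For each fixed $y$ the inner integrand arises by applying to $f$ the Fourier multiplier in $x$ whose symbol is $\xi\mapsto D_y^{\beta}\sigma_A(y,\xi)$, and this is precisely where \cite[Theorem III.1]{pseudosvinlekin} takes the place of the $L^2$ Peetre argument.

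The crux is then to verify that these frozen symbols satisfy the hypotheses of the multiplier theorem uniformly in $y$. Because $\sigma_A\in\Tilde{S}^0_{1,0}$, the increments obey $|D_y^{\beta}\triangleplus_\eta^\xi\sigma_A(y,\xi)|\lesssim\|\eta\|_p^{\alpha}\langle\xi\rangle^{-\alpha}$ with constants independent of $y$, so \cite[Theorem III.1]{pseudosvinlekin} furnishes a bound
\[
\int_{\Z_p^d}|D_y^{\beta}A_y f(x)|^r\,dx\leq C(y)\,\|f\|_{L^r(\Z_p^d)}^r,\qquad \sup_{y\in\Z_p^d}C(y)<\infty.
\]
Integrating over $\Z_p^d$, whose Haar measure is finite, then yields $\|Af\|_{L^r}\lesssim\|f\|_{L^r}$ and hence the claimed boundedness of $T_\sigma$. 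I expect the main obstacle to be exactly this uniformity: one must check that the constant produced by \cite[Theorem III.1]{pseudosvinlekin} depends only on finitely many of the seminorms defining $\Tilde{S}^0_{1,0}$, and not on the frozen point $y$, so that it may be pulled out of the $y$-integral; once this is granted the finiteness of the measure makes the integration immediate. A lesser technical point, to be absorbed into the symbolic calculus, is the verification of the Peetre-type increment estimates showing $\langle\xi\rangle^{s}\in\Tilde{S}^{s}_{1,0}$ in the ultrametric setting.
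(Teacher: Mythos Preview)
Your proposal is correct and matches the paper's own argument, which is not written out in full but merely indicated as ``a proof as the above'' (i.e.\ the freezing trick of Theorem \ref{L2boundedness} and Corollary \ref{lrboundedness}) combined with the multiplier theorem \cite[Theorem III.1]{pseudosvinlekin} in place of Plancherel or Littlewood--Paley. The uniformity issue you flag is exactly the right point to check, and it follows because the $\Tilde{S}^0_{1,0}$-seminorms of $D_y^\beta\sigma_A(y,\cdot)$ are bounded uniformly in $y$ by the very definition of the class.
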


\section{\textbf{Compact and Inessential operators}} Another interesting property that can be put in terms of the symbol is the compactness of a pseudo-differential operator. In general, it is an interesting problem to put in terms of the symbol the belonging to a certain operator ideal for pseudo-differential operators in H{\"o}rmander classes. That is: to give conditions on the symbol in order to assure that a pseudo-differential operator $T_\sigma$ belongs to the intersection $$\mathcal{I} \cap Op(S^0_{\rho, \delta} (\Z_p^d \times \widehat{\Z}_p^d)).$$Here $\mathcal{I}$ denotes an operator ideal in the sense of \cite{operatorsideals, Pietsch}. 
\begin{defi}
{\normalfont Let $\mathcal{L}$ be the collection of all bounded operators defined on Banach spaces. That is

$$\mathcal{L} := \bigcup_{E,F \text{ Banach Spaces}} \mathcal{L} (E,F).$$

An operator ideal is a sub-collection $\mathcal{I}$ of $\mathcal{L}$ such that each one of its components

$$\mathcal{I} (E,F) := \mathcal{I} \cap \mathcal{L}, $$
satisfy the following conditions:

\begin{enumerate}
    \item[(i)] For each one-dimensional Banach space $E$ the identity map $I_E$ belongs to $\mathcal{I}(E)$.
    \item[(ii)] If $T_1 , T_2 \in \mathcal{I} (E,F)$ then $T_1 + T_2 \in \mathcal{I}(E,F)$.
    \item[(iii)] If $T \in \mathcal{I}(E,F)$ and $X\in \mathcal{L}(F,F_0) , Y \in \mathcal{L} (E_0 , E)$ then their composition $X  T Y \in \mathcal{I} (E_0 , F_0)$, where $E_0 $ and $F_0$ are Banach spaces.
    \end{enumerate}
    
If additionally each component $\mathcal{I} (E,F)$ is closed in the operator norm topology of $\mathcal{L} (E,F)$ then it is said that $\mathcal{I}$ is a closed operator ideal.}     
\end{defi}
In Sections 4 and 5 our main goal is to give conditions on the symbol $\sigma \in \Tilde{S}^0_{\rho, \delta} (\Z_p^d \times \widehat{\Z}_p^d)$ of a pseudo-differential operator for belonging to certain operator ideals. Our first step is the biggest closed ideal in general Banach spaces, the radical of the ideal of compact operators, called the ideal of inessential operators. In Hilbert spaces it is well known that the ideal of inessential operators coincide exactly with the ideal of compact operators. The ideal of compact operators $\mathfrak{K}(L^2(\Z_p^d))$ is the biggest closed operator ideal in $\mathcal{L}(L^2 (\Z_p^d))$ and, in general $L^r$-spaces, it is $\mathfrak{R}(L^r(\Z_p^d))$, the component of the ideal of inessential operators in $\mathcal{L} (L^r(\Z_p^d))$. In particular when $r=2$ the equality $\mathfrak{R}(L^2(\Z_p^d))=\mathfrak{K}(L^2(\Z_p^d))$ holds. 

For operators acting on $L^r (\Z_p^d)$ there exists a simple characterization of the ideal $\mathfrak{R}(L^r(\Z_p^d)) \cap Op(\Tilde{S}^0_{1,0} (\Z_p^d \times \widehat{\Z}_p^d))$ given in the following theorem:
\begin{teo}\label{uniform compactpseudos}
Let $T_\sigma \in Op(S^0_{1 , 0} (\Z_p^d \times \widehat{\Z}_p^d))$ be a pseudo-differential operator. Them $T_\sigma$ extends to bounded operator in the ideal $\mathfrak{R}(L^r(\Z_p^d)) \cap Op(\Tilde{S}^0_{1,0} (\Z_p^d \times \widehat{\Z}_p^d))$ if and only if $$d_\sigma :=\limsup_{||\xi||_p \to \infty} || \sigma (\cdot , \xi)||_{L^\infty (\Z_p^d)} = 0.$$
\end{teo}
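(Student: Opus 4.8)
The plan is to establish the two implications separately, using throughout a \emph{modulation--multiplier decomposition} of $T_\sigma$. First note that, since $\sigma\in\Tilde{S}^0_{1,0}(\Z_p^d\times\widehat{\Z}_p^d)$, the operator $T_\sigma$ is already bounded on $L^r(\Z_p^d)$ for every $1<r<\infty$ by the final theorem of Section 3, and that $\mathfrak{K}(L^r(\Z_p^d))\subseteq\mathfrak{R}(L^r(\Z_p^d))$, with equality when $r=2$; thus for $r=2$ the statement is a compactness criterion. Expanding the symbol in its $x$-Fourier series $\sigma(x,\xi)=\sum_\eta\widehat{\sigma}(\eta,\xi)\chi_p(\eta\cdot x)$ one checks that
\begin{equation*}
T_\sigma=\sum_{\eta\in\widehat{\Z}_p^d}E_\eta\,T_{m_\eta},\qquad m_\eta(\xi):=\widehat{\sigma}(\eta,\xi),
\end{equation*}
where $E_\eta$ is multiplication by the character $\chi_p(\eta\cdot x)$ (an isometry of every $L^r(\Z_p^d)$) and $T_{m_\eta}$ is the Fourier multiplier with symbol $m_\eta$. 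The symbol estimates of Definition \ref{hormanderclasses} with $\alpha=0$ give $|D_x^\beta\sigma(x,\xi)|\lesssim 1$ uniformly in $\xi$, hence by the relation between $x$-smoothness and Fourier decay $\sup_\xi|\widehat{\sigma}(\eta,\xi)|\lesssim\langle\eta\rangle^{-\beta}$ for every $\beta$, and choosing $\beta>d$ makes $\sum_\eta\sup_\xi|m_\eta(\xi)|$ finite.

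For the sufficiency assume $d_\sigma=0$. Since $|\widehat{\sigma}(\eta,\xi)|\leq\|\sigma(\cdot,\xi)\|_{L^1(\Z_p^d)}\leq\|\sigma(\cdot,\xi)\|_{L^\infty(\Z_p^d)}$, the hypothesis forces $m_\eta(\xi)\to 0$ as $\|\xi\|_p\to\infty$ for each fixed $\eta$, so each diagonal multiplier $T_{m_\eta}$ is compact on $L^2(\Z_p^d)$, and so is $E_\eta T_{m_\eta}$. As $\|E_\eta T_{m_\eta}\|_{\mathcal{L}(L^2)}=\sup_\xi|m_\eta(\xi)|\lesssim\langle\eta\rangle^{-\beta}$ is summable, the decomposition converges in operator norm and exhibits $T_\sigma$ as a norm limit of compact operators, whence $T_\sigma\in\mathfrak{K}(L^2(\Z_p^d))$. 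To reach a general $L^r$ I would invoke the Krasnoselskii interpolation theorem for compactness: $T_\sigma$ is bounded on $L^{r_0}$ and $L^{r_1}$ for some $r_0<2<r_1$ and compact on $L^2$, hence compact on every intermediate $L^r$, and since $\mathfrak{K}(L^r)\subseteq\mathfrak{R}(L^r)$ this gives $T_\sigma\in\mathfrak{R}(L^r(\Z_p^d))$.

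For the necessity I argue by contraposition through a Gohberg-type construction. Suppose $d_\sigma=c>0$; then there are frequencies $\xi_n$ with $\|\xi_n\|_p\to\infty$ and points $x_n$ with $|\sigma(x_n,\xi_n)|\geq c/2$. Put $f_n:=|B_n|^{-1/r}\mathbf{1}_{B_n}\chi_p(\xi_n\cdot x)$ with $B_n=x_n+p^{k_n}\Z_p^d$, normalised so that $\|f_n\|_{L^r}=1$, and note that $\widehat{f_n}$ is supported in $\xi_n+\{\|\zeta\|_p\leq p^{k_n}\}$. Two estimates controlled by the $\Tilde{S}^0_{1,0}$ hypotheses show $T_\sigma f_n$ is close to $\sigma(x_n,\xi_n)f_n$: the variation of $\sigma$ in $\xi$ over the support of $\widehat{f_n}$ is $O(p^{k_n}\langle\xi_n\rangle^{-1})$ by the first difference estimate, while the variation of $\sigma(\cdot,\xi_n)$ across $B_n$ is bounded by the tail $\sum_{\|\eta\|_p>p^{k_n}}\langle\eta\rangle^{-\beta}$ from the $x$-Fourier decay, since the modes with $\|\eta\|_p\leq p^{k_n}$ are constant on $B_n$. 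Choosing $k_n\to\infty$ with $p^{k_n}\langle\xi_n\rangle^{-1}\to 0$ kills both errors, so $\liminf_n\|T_\sigma f_n\|_{L^r}\geq c/2$.

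Passing to a subsequence along which the frequency balls are pairwise disjoint and live at distinct scales $\|\xi\|_p=p^{j_n}$, the Littlewood--Paley theorem (Lemma \ref{littlewoodpaley}) shows that the closed span $M$ of $\{f_n\}$ is a complemented subspace of $L^r(\Z_p^d)$ on which $T_\sigma$ is, up to a small perturbation, the diagonal map $f_n\mapsto\sigma(x_n,\xi_n)f_n$; hence $T_\sigma|_M$ is an isomorphism onto a complemented image. Composing its inverse with the relevant projections produces $S\in\mathcal{L}(L^r)$ with $ST_\sigma=I$ on the infinite-dimensional space $M$, so $I-ST_\sigma$ has infinite-dimensional kernel and is not Fredholm, contradicting $T_\sigma\in\mathfrak{R}(L^r(\Z_p^d))$ by the defining property of the radical of the compact ideal. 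I expect this last step to be the main obstacle: the argument is transparent in $L^2$ (a weak-null sequence with $\|T_\sigma f_n\|\not\to 0$ already contradicts compactness), and the real work for $r\neq 2$ is the $L^r$ functional analysis — promoting the frequency-localised test sequence to a complemented copy of $\ell^2$ on which $T_\sigma$ is bounded below — which is precisely where the $\Lambda_r$ and Littlewood--Paley theory of Section 3 is needed.
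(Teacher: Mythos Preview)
Your sufficiency argument and the paper's share the same modulation--multiplier decomposition $T_\sigma=\sum_\eta E_\eta T_{m_\eta}$, but diverge in how they reach $L^r$. The paper stays in $L^r$ throughout: it cites the $L^r$-multiplier theorem of \cite{pseudosvinlekin} for the boundedness of each $T_{m_\eta}$, observes that $m_\eta(\xi)\to 0$ forces $T_{m_\eta}\in\mathfrak{R}(L^r)$ (via \cite[Remark~2.2]{Velasquez-Rodriguez2019}), and then uses closedness of the ideal. Your route---compactness on $L^2$ followed by Krasnosel'ski\u{\i} interpolation---is a genuine alternative and in fact yields the \emph{stronger} conclusion $T_\sigma\in\mathfrak{K}(L^r)$, not merely $\mathfrak{R}(L^r)$, at the price of importing an interpolation theorem the paper never needs.

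For necessity the two arguments are quite different. The paper does not build any complemented-subspace obstruction. Instead it combines two tools already in hand: Gohberg's Lemma (Lemma~\ref{gohbers}), giving $\|T_\tau-K\|\geq d_\tau$ for every compact $K$, and the symbolic calculus, which says the symbol of $T_\sigma^k$ is $\sigma^k$ modulo $\Tilde{S}^{-\infty}$. From $T_\sigma\in\mathfrak{R}(L^r)$ one gets (via the Riesz-operator characterisation) that some power $T_\sigma^k$ is compact, hence $d_{\sigma^k}=0$ by Gohberg, hence $d_\sigma=0$. This is essentially a two-line reduction once Lemma~\ref{gohbers} is available. Your contrapositive---producing $S$ with $I-ST_\sigma$ non-Fredholm by showing $T_\sigma$ restricts to an isomorphism on a complemented infinite-dimensional subspace---is correct in spirit and transparent when $r=2$, but for $r\neq 2$ it demands exactly the delicate $L^r$ functional analysis you flag as the main obstacle (promoting the frequency-localised $f_n$ to a complemented sequence-space copy on which the diagonal approximation holds uniformly). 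The paper's route buys modularity and brevity by reusing Gohberg's Lemma and the calculus; yours is more self-contained but substantially harder to close rigorously away from $L^2$.
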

The proof of the above theorem usually depends on the following lemma, usually called the Gohber's Lemma \cite{Molahajloo2010, Velasquez-Rodriguez2019}.

\begin{lema}\label{gohbers}
Let $1<r < \infty$ be a given real number. Let $r'$ be such that $\frac{1}{r} + \frac{1}{r'} = 1.$ Let $T_\sigma \in Op(\Tilde{S}^0_{1, 0} (\Z_p^d \times \widehat{\Z}_p^d))$ be a pseudo-differential operator. Then for every compact operator $K \in \mathfrak{K}(L^r (\Z_p^d))$ it holds $$||T_\sigma - K||_{\mathcal{L}(L^r (\Z_p^d))} \geq d_\sigma,$$ where $$d_\sigma := \limsup_{||\xi||_p \to \infty} ||\sigma (\cdot , \xi)||_{L^\infty (\Z_p^d)}.$$When $r=2$ the same holds true for $T_\sigma \in Op(\Tilde{S}^0_{0 , 0} (\Z_p^d \times \widehat{\Z}_p^d)).$
\end{lema}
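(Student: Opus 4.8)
The plan is to prove the lower bound $\|T_\sigma - K\|_{\mathcal{L}(L^r)} \geq d_\sigma$ by constructing a sequence of test functions that ``escape to infinity'' in the frequency variable, so that the compact operator $K$ becomes negligible on them while $T_\sigma$ retains a norm arbitrarily close to $d_\sigma$. The guiding principle is standard: a compact operator maps a weakly null sequence of unit vectors to a norm-null sequence, so if I can produce unit vectors $f_n \rightharpoonup 0$ weakly with $\|T_\sigma f_n\|_{L^r} \to d_\sigma$, the triangle inequality
\[
\|T_\sigma - K\|_{\mathcal{L}(L^r)} \geq \|(T_\sigma - K)f_n\|_{L^r} \geq \|T_\sigma f_n\|_{L^r} - \|K f_n\|_{L^r}
\]
forces the conclusion upon letting $n \to \infty$.

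\emph{First I would} choose, by definition of $d_\sigma = \limsup_{\|\xi\|_p \to \infty}\|\sigma(\cdot,\xi)\|_{L^\infty}$, a sequence of frequencies $\xi_n \in \widehat{\Z}_p^d$ with $\|\xi_n\|_p \to \infty$ and points $x_n \in \Z_p^d$ realizing $|\sigma(x_n,\xi_n)| \to d_\sigma$. The natural test functions are the characters $f_n(x) := \chi_p(\xi_n \cdot x)$, which have $\|f_n\|_{L^r} = 1$ for every $r$ (since $\Z_p^d$ has unit measure and $|\chi_p|\equiv 1$) and which converge weakly to zero as $\|\xi_n\|_p \to \infty$, being an orthonormal system in $L^2$ whose frequency support escapes every finite set. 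Applying $T_\sigma$ gives $T_\sigma f_n(x) = \sigma(x,\xi_n)\chi_p(\xi_n\cdot x)$, so that pointwise $|T_\sigma f_n(x)| = |\sigma(x,\xi_n)|$ and hence $\|T_\sigma f_n\|_{L^r} = \|\sigma(\cdot,\xi_n)\|_{L^r}$.

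\emph{The hard part will be} bridging the gap between the $L^r$-norm $\|\sigma(\cdot,\xi_n)\|_{L^r}$ that the characters produce and the $L^\infty$-norm appearing in the definition of $d_\sigma$. Since $\|\sigma(\cdot,\xi_n)\|_{L^r} \leq \|\sigma(\cdot,\xi_n)\|_{L^\infty}$, the raw computation yields only the wrong-direction estimate. To recover the $L^\infty$ value one must \emph{localize}: instead of bare characters, I would modulate them against a sequence of functions concentrated near the maximizing points $x_n$, using the totally disconnected structure of $\Z_p^d$ where indicator functions of balls $B(x_n, p^{-N})$ are available and smooth. Replacing $f_n$ by $p^{Nd/r}\mathbf{1}_{B(x_n,p^{-N})}(x)\,\chi_p(\xi_n\cdot x)$ (normalized in $L^r$) concentrates the mass where $|\sigma|$ is nearly maximal, and here the symbol estimate $\sigma \in \Tilde{S}^0_{1,0}$ enters decisively: the hypothesis $|D_x^\beta \sigma(x,\xi)| \lesssim \langle\xi\rangle^{\delta\beta}$ (with $\delta=0$) controls the oscillation of $\sigma(\cdot,\xi_n)$ across a ball, guaranteeing that on a sufficiently small ball the symbol is essentially constant and equal to its near-supremum, while the Fourier support of the localized packet still escapes to infinity so weak nullity persists.

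\emph{I expect} the $r=2$ refinement to $\Tilde{S}^0_{0,0}$ to follow from the same construction but exploiting the Hilbert-space identity $\|T_\sigma f_n\|_{L^2} = \|\sigma(\cdot,\xi_n)\|_{L^2}$ directly, together with orthogonality, which makes the weak-nullity of the localized packets automatic from the escaping frequency support and removes the need for $\rho=1$ in controlling the $x$-regularity. The main obstacle throughout is ensuring that the localization scale $N = N(n)$ can be chosen to grow slowly enough that the balls $B(x_n,p^{-N})$ shrink and capture the essential supremum of $\sigma(\cdot,\xi_n)$, yet not so fast that the concentration destroys the weak convergence to zero; balancing these two requirements against the symbol's regularity is where the $\Tilde{S}^0_{1,0}$ (respectively $\Tilde{S}^0_{0,0}$) hypothesis must be used quantitatively rather than merely qualitatively.
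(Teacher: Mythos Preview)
Your overall strategy --- test against a weakly null sequence built from characters, let compactness kill $K$, and localize to recover the $L^\infty$-norm of the symbol --- is exactly the right idea and matches the paper's approach in spirit. But there is a genuine gap in the localization step. Once you replace the pure character $\chi_p(\xi_n\cdot x)$ by the packet $g_n := c_n\,\mathbf{1}_{B(x_n,p^{-N})}\,\chi_p(\xi_n\cdot x)$, the operator $T_\sigma$ no longer acts by multiplication by $\sigma(\cdot,\xi_n)$: the Fourier transform of $g_n$ is spread over a ball of radius $p^{N}$ around $\xi_n$, so $T_\sigma g_n$ involves $\sigma(x,\eta)$ for all $\eta$ in that ball. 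Making $T_\sigma g_n \approx \sigma(\cdot,\xi_n)g_n$ therefore requires the $\xi$-regularity of the symbol (the $\triangleplus_\eta^\xi$ estimate with $\rho>0$), \emph{not} the $x$-derivative bound you cite; the $x$-equicontinuity only handles the subsequent step of replacing $\sigma(x,\xi_n)$ by $\sigma(x_n,\xi_n)$ on the ball. Your argument can be repaired for $\Tilde S^0_{1,0}$ by invoking the $\triangleplus$-decay, but this route does not extend to the $r=2$, $\Tilde S^0_{0,0}$ case where $\rho=0$ gives no such decay.

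The paper avoids this difficulty with a simple device: it keeps the pure characters $\chi_p(\xi_l\cdot x)$ as test functions (so that $T_\sigma\chi_p(\xi_l\cdot x)=\sigma(\cdot,\xi_l)\chi_p(\xi_l\cdot x)$ \emph{exactly}), and performs the spatial localization \emph{after} applying $T_\sigma-K$, by pre-composing the norm estimate with a multiplication operator $M_\delta$ supported on a small ball $B(x_0,\delta)$. Since $M_\delta$ has bounded operator norm, $\|T_\sigma-K\|\geq \|M_\delta(T_\sigma-K)\chi_p(\xi_l\cdot x)\|$, and now only the $x$-equicontinuity of $\{\sigma(\cdot,\xi)\}_\xi$ (which follows from $\delta=0$ via the Sobolev--H\"older embedding, uniformly in $\xi$) is needed to conclude. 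This is why the paper's proof works verbatim for $\Tilde S^0_{0,0}$ when $r=2$; the restriction to $\Tilde S^0_{1,0}$ for general $r$ is only so that $T_\sigma$ is known to be bounded on $L^r$.
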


\begin{proof}
\esp
We want to divide the proof in several steps:
\begin{enumerate}
    \item[(i)]Using the above embedding theorems in \cite{sobolevmetrizablegroups} we se that each function $\sigma (x , \xi)$ is a continuous function so actually$$||\sigma (\cdot , \xi)||_{L^\infty (\Z_p^d)}= \sup_{x \in \Z_p^d} |\sigma(x , \xi)|.$$Moreover, the sequence of functions $\{\sigma (x , \xi)\}_{\xi \in \widehat{\Z}_p^d}$ is equicontinuous. This because of the embedding of Sobolev spaces in H{\"o}lder spaces proven in \cite{sobolevmetrizablegroups} and the fact that all the derivatives of $\sigma$ are uniformly bounded. Explicitly, the following inequality holds true: $$|\sigma(x,\xi) - \sigma (y , \xi)| \leq |x - y|^{\alpha} ||\sigma (\cdot , \xi)||_{C^{0, \alpha} (\Z_p^d)} \lesssim |x - y|^{\alpha} ||\sigma (x , \xi )||_{H^{\alpha + d/2}_2 (\Z_p^d)}\lesssim |x - y|^{\alpha},$$for any $\alpha \in (0,1)$.
    \item[(ii)]The sequence $\{\chi_p (\xi_l \cdot x)\}_{l \in \N}$ converges to zero weakly. Hence for every compact operator $K$ the sequence $K \chi_p (\xi_l \cdot x)$ converges to zero strongly and we have for large $l$: $$||K \chi_p (\xi_l \cdot x) ||_{L^r(\Z_p^d)} < \varepsilon/2.$$Note that the same hold true if we replace $K$ for $K'= T K$ for any bounded operator $T$.
    \item[(iii)]$\Z_p^d$ is compact. Then for every every $\xi \in \widehat{\Z}_p^d$ there exists a $x_\xi \in \Z_p^d$ such that $$||\sigma (\cdot , \xi)||_{L^\infty (\Z_p^d)} = |\sigma (x_\xi , \xi)|.$$Let us take a sub-sequence $\{(x_{\xi_l} , \xi_l)\}$ of $\{(x_\xi , \xi)\}$ in such a way that $$\lim_{l \to \infty} |\sigma (x_{\xi_l} , \xi_l)| = d_\sigma.$$Because of the compactness of $\Z_p^d$ we can conclude that the sequence $\{x_{\xi_l}\}_{l \in \N}$ has an accumulation point $x_0$ and thus a sub sequence converging to $x_0$. Without loss of generality let us assume that $x_{\xi_l} \to x_0$ when $l \to \infty$. 
    \item[(iv)] Take a ball $B (x_0, \delta)$ in such a way that $$|\sigma(x , \xi) - \sigma(x_0 , \xi)| < \varepsilon/2,$$ for every $x \in B (x_0, \delta)$ and every $\xi \in \widehat{\Z}_p^d.$ Let $\psi_\delta$ be the characteristic function in $B (x_0, \delta)$, let $|B (x_0, \delta)|$ denote the volume of $B (x_0, \delta)$ and define $M_\delta$ as the multiplication operator $$M_\delta f(x):= \frac{\psi_\delta (x)}{|B (x_0, \delta)|^{1/\nu (r)}}  f (x),$$where $\nu (r):= \max\{2, 1 - 1/r = r'\}.$
    \item[(v)] For every $f \in L^r (\Z_p^d)$ we have the inequalities \begin{align*}
        ||T_\sigma - K||_{\mathcal{L}(L^r(\Z_p^d))} ||f||_{L^r(\Z_p^d)} &\geq ||M_\delta (T_\sigma - K)||_{\mathcal{L}(L^r(\Z_p^d))} ||f||_{L^r(\Z_p^d)} \\ & \geq ||M_\delta (T_\sigma - K) f||_{L^r(\Z_p^d)}\\ & \geq \big| ||M_\delta T_\sigma f  ||_{L^r(\Z_p^d)} - ||M_\delta K f ||_{L^r (\Z_p^d)}\big|.
    \end{align*}
    \item[(vi)] For large $l$ and $f= \chi_p (\xi_l \cdot x)$ we obtain \begin{align*}
        ||T_\sigma - K||_{\mathcal{L}(L^r(\Z_p^d))} \geq \Big( \int_{\Z_p^d} \frac{\psi_\delta (x)}{|B (x_0, \delta)|^{1/r'}}  |\sigma(x , \xi_l
    )|^{r} dx \Big)^{1/r} - \varepsilon/2 \geq |\sigma(x_{\xi_l} , \xi_l)| - \varepsilon
    \end{align*}Taking the limit as $l \to \infty$ the proof is complete. 
\end{enumerate}
\end{proof}

Using the above lemma and Corollary \ref{Lrmultiplier}, in the same spirit as \cite[Theorem 3.4]{Velasquez-Rodriguez2019}, we can prove a characterisation of compact operators in the Hörmander classes $S^0_{0,0} (\Z_p^d \times \widehat{\Z}_p^d)$ defined in \cite[Definition 4.8]{p-adicHormanderclasses}. The proof is the same as in \cite{Velasquez-Rodriguez2019} where the reader may refer for further details.

\begin{teo}
Let $T_\sigma$ be a pseudo-differential operator whose symbol $\sigma (x,||\xi||_p)$ satisfy the hypothesis of Corollary \ref{lrboundedness} for $s=m=0$. Then $T_\sigma$ defines a compact operator in $L^r (\Z_p^d)$ if and only if $$\limsup_{||\xi||_p \to \infty} ||\sigma(\cdot, ||\xi||_p)||_{L^\infty (\Z_p^d)} = 0. $$
\end{teo}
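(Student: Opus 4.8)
The statement is an ``if and only if'' and I would prove the two directions separately, treating the distance of $T_\sigma$ to the compact ideal as the quantity to pin down. The necessity is immediate from Gohberg's lemma: if $T_\sigma$ is compact, then taking $K = T_\sigma$ in Lemma \ref{gohbers} gives $0 = \|T_\sigma - T_\sigma\|_{\mathcal{L}(L^r)} \ge d_\sigma \ge 0$, hence $d_\sigma = 0$. This reproduces the easy half exactly as in \cite{Velasquez-Rodriguez2019} and needs nothing beyond the already-proven lemma.

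For sufficiency I would exploit that the symbol depends on $\xi$ only through $\|\xi\|_p$, which lets the operator be read off the Littlewood--Paley shells. Writing $\sigma_j(x) := \sigma(x, p^j)$ and letting $S_j$ denote the projection onto the frequency shell $\{\|\xi\|_p = p^j\}$ from Lemma \ref{littlewoodpaley}, one has $T_\sigma f = \sum_j \sigma_j \, S_j f$. Since each shell $\{\|\xi\|_p = p^j\}$ is a finite subset of $\widehat{\Z}_p^d$, every $S_j$ is a finite-rank projection, so the truncations $T^{(N)} := \sum_{j \le N} \sigma_j S_j$ are finite-rank, hence compact. The plan is to show $\|T_\sigma - T^{(N)}\|_{\mathcal{L}(L^r)} \to 0$ as $N \to \infty$, exhibiting $T_\sigma$ as a norm-limit of compact operators.

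The heart of the argument is the tail estimate for $R_N := \sum_{j>N} \sigma_j S_j$, and here I would use the square-function equivalence of Lemma \ref{littlewoodpaley}. The $p$-adic ultrametric makes the shells behave orthogonally under multiplication by functions whose frequencies lie below $p^j$: if $\|\eta\|_p < p^j = \|\xi\|_p$ then $\|\xi+\eta\|_p = p^j$, so such a multiplication leaves a shell-$j$ character in shell $j$. Granting this, $S_k(R_N f) = \sigma_k S_k f$ for $k>N$ and $0$ otherwise, whence pointwise $\big(\sum_k |S_k R_N f|^2\big)^{1/2} \le \big(\sup_{j>N}\|\sigma_j\|_{L^\infty}\big)\big(\sum_k |S_k f|^2\big)^{1/2}$; applying Lemma \ref{littlewoodpaley} on both sides yields $\|R_N\|_{\mathcal{L}(L^r)} \lesssim \sup_{j>N}\|\sigma_j\|_{L^\infty}$. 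Since $d_\sigma = 0$ forces $\sup_{j>N}\|\sigma_j\|_{L^\infty}\to 0$, the tail norm tends to $0$ and the proof closes. The baseline $x$-independent instance of this estimate is precisely Corollary \ref{Lrmultiplier}.

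I expect the genuine obstacle to be this shell-orthogonality step, since for a merely finitely-differentiable symbol $\sigma_j$ need not be locally constant at scale $p^{-(j-1)}$ and multiplication by it leaks a shell-$j$ function into neighbouring shells. I would resolve this by writing $\sigma_j = E_{j-1}\sigma_j + \delta_j$, where $E_{j-1}\sigma_j$ is the average of $\sigma_j$ over balls of radius $p^{-(j-1)}$. The main piece $E_{j-1}\sigma_j$ has frequencies below $p^j$, so by the ultrametric it preserves shell $j$ exactly and obeys $\|E_{j-1}\sigma_j\|_{L^\infty}\le\|\sigma_j\|_{L^\infty}$, giving the clean tail bound above. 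For the correction $\sum_{j>N}\delta_j S_j$ I would not rely on orthogonality at all but on summability: the H\"older continuity of $\sigma(\cdot,\xi)$ recorded in step (i) of the proof of Lemma \ref{gohbers} gives $\|\delta_j\|_{L^\infty}\lesssim p^{-(j-1)\alpha}$ for some $\alpha\in(0,1)$, whence $\sum_{j>N}\|\delta_j\|_{L^\infty}\to 0$ and the crude bound $\|\sum_{j>N}\delta_j S_j\|_{\mathcal{L}(L^r)}\le\sum_{j>N}\|\delta_j\|_{L^\infty}\|S_j\|_{\mathcal{L}(L^r)}$ suffices. Everything else is the routine bookkeeping already carried out in \cite{Velasquez-Rodriguez2019}.
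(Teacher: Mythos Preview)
Your argument is correct, and the necessity direction via Lemma~\ref{gohbers} is exactly what the paper does. The sufficiency, however, is proved along a genuinely different axis. The paper expands the symbol as a Fourier series in the \emph{space} variable, $\sigma(x,\|\xi\|_p)=\sum_{\eta}\widehat{\sigma}(\eta,\|\xi\|_p)\chi_p(\eta\cdot x)$, so that $T_\sigma=\sum_\eta \chi_p(\eta\cdot\,\cdot\,)T_{\widehat{\sigma}_\eta}$ is a sum of (character)$\times$(radial Fourier multiplier); each multiplier is compact by Corollary~\ref{Lrmultiplier} because $|\widehat{\sigma}(\eta,\|\xi\|_p)|\le\|\sigma(\cdot,\|\xi\|_p)\|_{L^\infty}\to 0$, and one then argues that the partial sums in $\eta$ converge in operator norm. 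Your route instead truncates in the \emph{frequency} variable along Littlewood--Paley shells, producing directly finite-rank approximants $T^{(N)}$ and controlling the tail via the square-function equivalence of Lemma~\ref{littlewoodpaley} together with the ultrametric shell preservation. What your approach buys is an explicit finite-rank approximation and a transparent tail bound $\|R_N\|\lesssim\sup_{j>N}\|\sigma_j\|_{L^\infty}$; the price is the extra splitting $\sigma_j=E_{j-1}\sigma_j+\delta_j$ and the appeal to the uniform H\"older estimate (which indeed follows from the Sobolev bound in the hypothesis of Corollary~\ref{lrboundedness} via the embedding used in step~(i) of Lemma~\ref{gohbers}). The paper's decomposition avoids the shell-leakage issue entirely, but must in turn justify operator-norm convergence of the $\eta$-series, which is where the $x$-regularity of $\sigma$ enters on that side.
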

\begin{proof}
If $T_\sigma$ is a compact operator then, applying Lemma \ref{gohbers}, we get 
$$\limsup_{||\xi||_p \to \infty} ||\sigma (\cdot, ||\xi||_p)||_{L^\infty (\Z_p^d)} = 0.$$ Conversely, if the condition $$\limsup_{||\xi||_p \to \infty} ||\sigma(\cdot, ||\xi||_p)||_{L^\infty (\Z_p^d)} = 0 $$ holds consider the sequence of Fourier multipliers $\{ T_{\widehat{\sigma}_\eta} \}_{\eta \in \widehat{\Z}_p^d}$ defined trough the symbols $$\widehat{\sigma}_\eta (||\xi||_p) := \widehat{\sigma} (\eta , ||\xi||_p).$$ Each one defines a compact operator since $$\lim_{||\xi||_p \to \infty} |\widehat{\sigma} (\eta , ||\xi||_p)| \leq \lim_{||\xi||_p \to \infty} ||\sigma (\cdot, ||\xi||_p)||_{L^1 (\Z_p^d)}  \lesssim \limsup_{||\xi||_p \to \infty} ||\sigma (\cdot, ||\xi||_p)||_{L^\infty (\Z_p^d)} = 0,$$ so, the operators $$A_\eta f (x) := \chi_p (\eta \cdot x) T_{\widehat{\sigma}_\eta} f (x),$$ are compact, and the sum $$\sum_{||\eta||_p \leq N } A_\eta f (x) := \sum_{||\eta||_p \leq N} \chi_p (\eta \cdot x) T_{\widehat{\sigma}_\eta} f (x),$$is compact as well. These because compact operators form an ideal in $\mathcal{L} (L^r(\Z_p^d)).$ Moreover, this ideal is a closed in the operator norm topology, so if the limit $$\lim_{N\to \infty} \sum_{||\eta||_p \leq N } A_\eta f (x) := \lim_{N\to \infty}  \sum_{||\eta||_p \leq N} \chi_p (\eta \cdot x) T_{\widehat{\sigma}_\eta}  f(x),$$converges then the limit operator is compact in $\mathcal{L}(L^r (\Z_p^d))$. Finally notice that 
\begin{align*}
    \lim_{N\to \infty}  \sum_{||\eta||_p \leq N} \chi_p (\eta \cdot x) T_{\widehat{\sigma}_\eta} f 
    (x) &= \sum_{\eta \in \widehat{\Z}_p} \chi_p (\eta \cdot x) T_{\widehat{\sigma}_\eta} \\ &= \sum_{\eta \in \widehat{\Z}_p} \sum_{\xi \in \widehat{\Z}_p} \chi_p (\eta \cdot x) \widehat{\sigma} (\eta , ||\xi||_p) \widehat{f}(\xi) \chi_p (\xi \cdot x) \\&= \sum_{\xi \in \widehat{\Z}_p} \sum_{\eta \in \widehat{\Z}_p}  \chi_p (\eta \cdot x) \widehat{\sigma} (\eta , ||\xi||_p) \widehat{f}(\xi) \chi_p (\xi \cdot x) \\&= \sum_{\xi \in \widehat{\Z}_p} \sigma (x,||\xi||_p) \widehat{f}(\xi) \chi_p (\xi \cdot x) = T_\sigma f (x).
\end{align*}This concludes the proof.
\end{proof}
Now we can give a characterization of the ideal $\mathfrak{R}(L^r(\Z_p^d)) \cap Op(\Tilde{S}^0_{1,0} (\Z_p^d \times \widehat{\Z}_p^d))$ identical to \cite{Velasquez-Rodriguez2019}. 

\begin{proof}[Proof of Theorem \ref{uniform compactpseudos}]
The proof is the same as in \cite{Velasquez-Rodriguez2019}. If $T_\sigma \in \mathfrak{R}(L^r(\Z_p^d)) \cap Op(\Tilde{S}^0_{1,0} (\Z_p^d \times \widehat{\Z}_p^d))$ then $T_\sigma^k$ is a compact operator for some $k \in \N$. The symbol of $T_\sigma^k$ is $(\sigma)^k$ plus the symbol of an infinitely smoothing operator. By Gohberg's Lemma $d_{\sigma^k} = 0,$ and thus $d_\sigma = 0$. On the other hand if the condition $$\limsup_{||\xi||_p \to \infty} ||\sigma(\cdot, \xi)||_{L^\infty (\Z_p^d)} = 0 $$ holds consider the sequence of Fourier multipliers $\{ T_{\widehat{\sigma}_\eta} \}_{\eta \in \widehat{\Z}_p^d}$ defined trough the symbols $$\widehat{\sigma}_\eta (\xi) := \widehat{\sigma} (\eta , \xi).$$They are bounded operators because of \cite[Theorem  III.1]{pseudosvinlekin}. Each one belongs to $\mathfrak{R}(L^r(\Z_p^d)) \cap Op(\Tilde{S}^0_{1,0} (\Z_p^d \times \widehat{\Z}_p^d))$ since $$\lim_{||\xi||_p \to \infty} |\widehat{\sigma} (\eta , \xi)| \lesssim \limsup_{||\xi||_p \to \infty}   ||\sigma (\cdot, \xi)||_{L^\infty (\Z_p^d)} = 0,$$ see \cite[Remark 2.2]{Velasquez-Rodriguez2019}. So, the operators $$A_\eta f (x) := \chi_p (\eta \cdot x) T_{\widehat{\sigma}_\eta} f (x),$$ belong to the ideal $\mathfrak{R}(L^r(\Z_p^d)) \cap Op(\Tilde{S}^0_{1,0} (\Z_p^d \times \widehat{\Z}_p^d))$, and the sum $$\sum_{||\eta||_p \leq N } A_\eta f (x) := \sum_{||\eta||_p \leq N} \chi_p (\eta \cdot x) T_{\widehat{\sigma}_\eta} f (x),$$is also in $\mathfrak{R}(L^r(\Z_p^d)) \cap Op(\Tilde{S}^0_{1,0} (\Z_p^d \times \widehat{\Z}_p^d))$. This concludes the proof.
\end{proof}
Notice that for $\sigma \in \Tilde{S}^0_{0,0} (\Z_p^d \times \widehat{\Z}_p^d)$ when $r=2$ we don't need a multiplier theorem. Thus, with the same proof, we obtain:
\begin{coro}
Let $T_\sigma \in Op(S^0_{0 , 0} (\Z_p^d \times \widehat{\Z}_p^d))$ be a pseudo-differential operator. Then $T_\sigma$ extends to compact operator on $L^2(\Z_p^d)$ if and only if $$d_\sigma :=\limsup_{|\xi|_p \to \infty} || \sigma (\cdot , \xi)||_{L^\infty (\Z_p^d)} = 0.$$
\end{coro}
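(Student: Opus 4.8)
The plan is to follow, essentially verbatim, the two-step argument already used for the preceding theorem and for Theorem \ref{uniform compactpseudos}, while exploiting the two simplifications that occur when $r=2$ and $\sigma \in \Tilde{S}^0_{0,0}(\Z_p^d \times \widehat{\Z}_p^d)$. First, on the Hilbert space $L^2(\Z_p^d)$ the ideal of inessential operators collapses to the ideal of compact operators, so $\mathfrak{R}(L^2(\Z_p^d)) = \mathfrak{K}(L^2(\Z_p^d))$ and no passage through an iterate $T_\sigma^k$ is needed. Second, the $L^2$-boundedness of $T_\sigma$ is automatic from the $\Tilde{S}^0_{0,0}$ estimates by Theorem \ref{L2boundedness}, so the $L^r$-multiplier machinery of Lemma \ref{littlewoodpaley} is never invoked.

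For the necessity I would appeal directly to Lemma \ref{gohbers}, whose final sentence grants the bound $||T_\sigma - K||_{\mathcal{L}(L^2(\Z_p^d))} \geq d_\sigma$ for every $K \in \mathfrak{K}(L^2(\Z_p^d))$ precisely in the class $\Tilde{S}^0_{0,0}$ when $r=2$. Choosing $K = T_\sigma$ forces $d_\sigma \leq 0$, whence $d_\sigma = 0$.

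For the sufficiency, assuming $d_\sigma = 0$, I would expand $\sigma$ in its Fourier series in the $x$-variable and set $\widehat{\sigma}_\eta (\xi) := \widehat{\sigma}(\eta, \xi)$ together with $A_\eta f(x) := \chi_p(\eta \cdot x)\, T_{\widehat{\sigma}_\eta} f(x)$. Each $T_{\widehat{\sigma}_\eta}$ is a Fourier multiplier on $L^2(\Z_p^d)$ whose symbol vanishes at infinity, since $|\widehat{\sigma}(\eta,\xi)| \leq ||\sigma(\cdot,\xi)||_{L^1(\Z_p^d)} \leq ||\sigma(\cdot,\xi)||_{L^\infty(\Z_p^d)}$ has $\limsup$ equal to $d_\sigma = 0$; hence $T_{\widehat{\sigma}_\eta}$ is compact, and so is $A_\eta$, because multiplication by the unitary character $\chi_p(\eta \cdot x)$ is bounded and $\mathfrak{K}(L^2(\Z_p^d))$ is an ideal. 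The partial sums $\sum_{||\eta||_p \leq N} A_\eta$ are therefore compact, and the pointwise computation carried out at the end of the preceding theorem identifies their limit on the dense span $\hb^\infty$ with $T_\sigma$.

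The one point deserving genuine care — and where the $\Tilde{S}^0_{0,0}$ hypothesis is used in full — is upgrading this to convergence in operator norm, which is what the closedness of $\mathfrak{K}(L^2(\Z_p^d))$ actually requires; the preceding proof only verifies convergence pointwise. On $L^2$ the character is unitary, so $||A_\eta||_{\mathcal{L}(L^2(\Z_p^d))} = \sup_{\xi} |\widehat{\sigma}(\eta,\xi)|$, and since $D^\beta_x$ acts on the $x$-Fourier side as multiplication by $\langle \eta \rangle^\beta$ (recall every nonzero $\eta \in \widehat{\Z}_p^d$ has $||\eta||_p \geq p$, so $\langle \eta \rangle = ||\eta||_p$), the uniform-in-$\xi$ boundedness of all $D^\beta_x \sigma$ gives $\sup_\xi |\widehat{\sigma}(\eta,\xi)| \lesssim \langle \eta \rangle^{-\beta}$ for every $\beta$. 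Summing over $\eta$ with $\beta > d$ yields $\sum_{\eta} ||A_\eta||_{\mathcal{L}(L^2(\Z_p^d))} < \infty$, so the series converges absolutely in operator norm; its limit is compact and, coinciding with $T_\sigma$ on $\hb^\infty$, equals $T_\sigma$. This absolute-convergence estimate is the main obstacle, and it is exactly the place where the decay of the $x$-Fourier coefficients of $\sigma$ afforded by the $\Tilde{S}^0_{0,0}$ condition is indispensable.
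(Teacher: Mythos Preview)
Your proposal is correct and follows the same two-step skeleton the paper uses (Gohberg's lemma for necessity, Fourier decomposition $T_\sigma = \sum_\eta A_\eta$ for sufficiency); indeed the paper's entire ``proof'' of this corollary is the one-line remark that for $r=2$ and $\sigma \in \Tilde{S}^0_{0,0}$ the argument of Theorem \ref{uniform compactpseudos} goes through without a multiplier theorem. Your explicit verification that $\sum_\eta \|A_\eta\|_{\mathcal{L}(L^2)} < \infty$ via the decay $\sup_\xi |\widehat{\sigma}(\eta,\xi)| \lesssim \langle \eta \rangle^{-\beta}$ is in fact more careful than the paper, which asserts the operator-norm convergence of the partial sums but does not justify it.
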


\section{\textbf{Nuclearity, Schatten-Von Neumann classes and Singular numbers}}

In this section we continue investigating the relation between operator ideals and the symbol of a pseudo-differential operator. We prove necessary and sufficient conditions on the symbol for belonging to certain operator ideals. To begin we need to remember the definition of some operator ideals well known in the literature.

\begin{defi}\normalfont
\esp 
\begin{enumerate}
    \item[(i)]Let $E,F$ be Banach spaces. We say that a linear operator $T \in \mathcal{L}(E,F)$ is $\gamma$-\emph{nuclear}, $0<\gamma \leq 1$, if there exist a so called $\gamma$-nuclear representation \begin{equation}\label{nuclear}
    T= \sum_{j =1}^\infty a_j \otimes y_j
    \end{equation} where $a_j \in E'$ and $y_j \in F$ for $j \in \N$, and $\{||a_j||_{E'} ||y_j||_{F} \}_{j \in \N} \in \ell^\gamma(\N)$. The collection of $\gamma$-nuclear operators in $\mathcal{L}(E,F)$ will be denoted by $\mathfrak{N}_\gamma (E,F)$.
    \item[(ii)] Let $H_1 , H_2$ be Hilbert spaces. Let $T \in \mathcal{L}(H_1 , H_2)$. We say that $T$ belongs to the Schatten-von Neumann class $S_{\gamma} (H_1 , H_2)$, $1 \leq \gamma < \infty$, if $$||T||_{S_\gamma (H_1 , H_2)} := \Big( \sum_{k \in \N_0}s_k (T)^\gamma \Big)^{1/\gamma}< \infty .$$
    \item[(iii)] We say that a linear operator $T \in \mathcal{L}(H_1 , H_2)$ is in the \emph{Dixmier class} $\mathcal{L}^{(1 , \infty)} (H_1 , H_2)$ if $$||T||_{\mathcal{L}^{(1 , \infty)}(H_1 , H_2)} := \sup_{N \geq 1} \frac{1}{\log (1 + N)} \sum_{0 \leq k \leq N} s_k(T)< \infty.$$
    \item[(iv)] Let $E,F$ be banach spaces. A sequence $\{s_k\}_{k\in \mathbb{N}}$ of functions that assings to every $T\in \mathcal{L}(E,F)$ a non-negative real number $s_k(T)$ with the following properties
\begin{enumerate}
\item[(s1)] $\norm{T}=s_1(T) \geq s_2(T) \geq ... \geq 0$ for all $T\in \mathcal{L}(E,F)$.
\item[(s2)] $s_{m+n-1}(T+R) \leq s_m(T) + s_n(R)$ for every $T,R\in \mathcal{L}(E,F)$.
\item[(s3)] $s_k(XTY) \leq \norm{X} s_k(T) \norm{Y}$ for every $Y\in \mathcal{L}(E_0,E)$, $T\in \mathcal{L}(E,F)$, $X\in \mathcal{L}(F,F_0)$.
\item[(s4)] $s_k(T)=0$ if $T\in \mathcal{L}(E,F)$ and $rank(T)< k$.
\item[(s5)] $s_k(I_k)=1$ where $I_k : \mathbb{C}^k \longrightarrow \mathbb{C}^k$ is the identity function. 
\end{enumerate}
is called a sequence a sequence of $s$-numbers. Some well known examples are:

\begin{itemize}
\item The approximation numbers: 
\begin{align*}
a_n(T):= inf\{\norm{T-A} \text{  } | \text{  } A\in \mathcal{L}(E,F), rank(T)<n\}
\end{align*}
\item The Weyl numbers: 
\begin{align*}
x_n(T):=sup\{a_n(TA) \text{  }| \text{  } A\in \mathcal{L}(l_2,E), \norm{A} \leq 1 \}
\end{align*}
\item The Hilbert numbers: 
\begin{align*}
h_n(T):=sup\{a_n(BTA) \text{  }| \text{  } A\in \mathcal{L}(l_2,E),B\in \mathcal{L}(F,l_2) \norm{A},\norm{B} \leq 1 \}
\end{align*}
\end{itemize}
\item[(v)] We say that $T \in \mathcal{L}(H_1 , H_2)$ is of $(r,s)$-type if $$||T_\sigma||_{\mathfrak{L}_{s,w} (L^2 (\Z_p^d))} := \Big( \sum_{k \in \N_0} [k^{\frac{1}{r} - \frac{1}{w}} s_k (T)]^w \Big)^{1/w}< \infty.$$
\end{enumerate}
\end{defi}
Notice that pseudo-differential operators are expressed by default in a similar way as in (\ref{nuclear}) so, if we impose some properties on the symbol then we can assure the $\gamma$-nuclearity. The following might  be consider an analogue of \cite[Theorem 3.1]{nuclearJulio}.

\begin{teo}
Let $T_\sigma:L^{r_1} (\Z_p^d) \to L^{r_2} (\Z_p^d)$ be a pseudo-differential operator with symbol $\sigma$ such that $$\sum_{\xi \in \widehat{\Z}_p^d} ||\sigma (\cdot , \xi)||_{L^{r_2} (\Z_p^d)}^{\gamma} < \infty.$$Then $T_\sigma$ is a $\gamma$-nuclear operator.
\end{teo}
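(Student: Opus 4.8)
The plan is to read off a $\gamma$-nuclear representation directly from the symbolic form of $T_\sigma$, without any estimation. First I would write, for $f$ in the dense subspace $\hb^\infty$ of trigonometric polynomials,
$$T_\sigma f(x) = \sum_{\xi \in \widehat{\Z}_p^d} \sigma(x,\xi)\,\chi_p(\xi\cdot x)\int_{\Z_p^d} f(y)\,\overline{\chi_p(\xi\cdot y)}\,dy .$$
This exhibits $T_\sigma = \sum_{\xi \in \widehat{\Z}_p^d} a_\xi \otimes y_\xi$ in exactly the shape of \eqref{nuclear}, where the rank-one pieces are the functional $a_\xi(f) := \widehat{f}(\xi) = \int_{\Z_p^d} f(y)\overline{\chi_p(\xi\cdot y)}\,dy \in (L^{r_1}(\Z_p^d))'$ and the function $y_\xi(x) := \sigma(x,\xi)\chi_p(\xi\cdot x) \in L^{r_2}(\Z_p^d)$.

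Next I would compute the two norms that enter the nuclearity condition. Let $r_1'$ be the conjugate exponent, $\frac{1}{r_1}+\frac{1}{r_1'}=1$. Since $|\chi_p(\xi\cdot x)| = 1$ and the Haar measure of $\Z_p^d$ is normalised to one, the functional $a_\xi$ is represented by the function $\overline{\chi_p(\xi\cdot\,)} \in L^{r_1'}(\Z_p^d)$, so $||a_\xi||_{(L^{r_1})'} = ||\chi_p(\xi\cdot\,)||_{L^{r_1'}(\Z_p^d)} = 1$. Similarly, because multiplication by the unimodular character $\chi_p(\xi\cdot\,)$ is an isometry of $L^{r_2}(\Z_p^d)$, we get $||y_\xi||_{L^{r_2}(\Z_p^d)} = ||\sigma(\cdot,\xi)||_{L^{r_2}(\Z_p^d)}$. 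Hence $||a_\xi||_{(L^{r_1})'}\,||y_\xi||_{L^{r_2}(\Z_p^d)} = ||\sigma(\cdot,\xi)||_{L^{r_2}(\Z_p^d)}$, and the hypothesis is precisely the assertion that $\{\,||a_\xi||_{(L^{r_1})'}\,||y_\xi||_{L^{r_2}(\Z_p^d)}\,\}_{\xi \in \widehat{\Z}_p^d} \in \ell^\gamma(\widehat{\Z}_p^d)$, which is exactly the requirement in the definition of a $\gamma$-nuclear representation. Therefore $T_\sigma \in \mathfrak{N}_\gamma(L^{r_1}(\Z_p^d), L^{r_2}(\Z_p^d))$.

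The one point demanding care is that the series $\sum_\xi a_\xi \otimes y_\xi$ genuinely converges to $T_\sigma$ as an operator in $\mathcal{L}(L^{r_1}(\Z_p^d), L^{r_2}(\Z_p^d))$, and not merely pointwise on $\hb^\infty$. This is handled by the restriction $0 < \gamma \leq 1$, which gives $\ell^\gamma \subseteq \ell^1$: the numerical series $\sum_\xi ||a_\xi||_{(L^{r_1})'}\,||y_\xi||_{L^{r_2}(\Z_p^d)} = \sum_\xi ||\sigma(\cdot,\xi)||_{L^{r_2}(\Z_p^d)}$ then converges, so $\sum_\xi a_\xi \otimes y_\xi$ converges absolutely in the operator norm to a bounded operator, which agrees with $T_\sigma$ on the dense subspace $\hb^\infty$ and hence equals $T_\sigma$. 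I expect this convergence bookkeeping, rather than the norm computation, to be the only real (and minor) obstacle.
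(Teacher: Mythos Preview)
Your proof is correct and follows precisely the approach the paper indicates: the paper does not give a formal proof, only the remark preceding the theorem that ``pseudo-differential operators are expressed by default in a similar way as in (\ref{nuclear}),'' i.e.\ the symbolic expansion $T_\sigma f = \sum_\xi \widehat{f}(\xi)\,\sigma(\cdot,\xi)\chi_p(\xi\cdot\,)$ is already a $\gamma$-nuclear representation. Your argument just makes explicit the two norm computations $\|a_\xi\|_{(L^{r_1})'}=1$ and $\|y_\xi\|_{L^{r_2}}=\|\sigma(\cdot,\xi)\|_{L^{r_2}}$ and the operator-norm convergence, all of which the paper leaves implicit.
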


In the case of the Hilbert space $L^2 (\Z_p)$, for operators in the Hörmander classes $\Tilde{S}^0_{\rho, 0} (\Z_p^d \times \widehat{\Z}_p^d)$, we can prove a necessary and sufficient condition for belonging to the Schatten-Von Neuman classes. 

\begin{teo}
Let $0<\rho \leq 1$. Let $T_\sigma$ be a pseudo-differential operator with symbol $\sigma \in \Tilde{S}^0_{\rho, 0} (\Z_p^d \times \widehat{\Z}_p^d)$. Then $T_\sigma$ extends to a bounded operator in the Schatten-Von Neumann class $S_\gamma (L^2 (\Z_p^d))$, $1\leq \gamma < \infty$, if and only if $$\sum_{\xi \in \widehat{\Z}_p^d} || \sigma (\cdot, \xi) ||_{L^\gamma (\Z_p^d) }^\gamma < \infty.$$  
\end{teo}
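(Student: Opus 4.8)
The plan is to work in the orthonormal basis $\{\chi_p(\xi\cdot x)\}_{\xi\in\widehat{\Z}_p^d}$ of $L^2(\Z_p^d)$, in which $T_\sigma$ has a transparent matrix. Since $T_\sigma\chi_p(\eta\cdot x)=\sigma(\cdot,\eta)\chi_p(\eta\cdot x)$, the matrix coefficients are $\langle T_\sigma\chi_p(\eta\cdot\,),\chi_p(\zeta\cdot\,)\rangle=\widehat{\sigma}(\zeta-\eta,\eta)$, where $\widehat{\sigma}(\cdot,\eta)$ denotes the Fourier transform of $\sigma(\cdot,\eta)$ in the first variable. First I would settle the case $\gamma=2$ as an exact identity: by the Hilbert--Schmidt formula and Plancherel in $x$,
\[
\|T_\sigma\|_{S_2(L^2(\Z_p^d))}^2=\sum_{\zeta,\eta\in\widehat{\Z}_p^d}|\widehat{\sigma}(\zeta-\eta,\eta)|^2=\sum_{\eta\in\widehat{\Z}_p^d}\|\sigma(\cdot,\eta)\|_{L^2(\Z_p^d)}^2,
\]
which is precisely the asserted equivalence (with equality) when $\gamma=2$. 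This identity will be the anchor for everything else.

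For general $\gamma$ I would split into the two regimes $1\le\gamma\le 2$ and $2\le\gamma<\infty$ and use the classical two-sided comparison between Schatten norms and matrix coefficients obtained by interpolating from the Hilbert--Schmidt case (the Schatten analogue of the Hausdorff--Young inequality): for $1\le\gamma\le 2$ one has $\|T_\sigma\|_{S_\gamma}^\gamma\le\sum_{\zeta,\eta}|\widehat{\sigma}(\zeta-\eta,\eta)|^\gamma$, and for $2\le\gamma<\infty$ the reverse inequality holds. After the change of variables $\theta=\zeta-\eta$ this reduces matters to comparing the mixed sequence sum $\sum_{\eta}\|\widehat{\sigma}(\cdot,\eta)\|_{\ell^\gamma(\widehat{\Z}_p^d)}^\gamma$ with the quantity $A_\gamma:=\sum_{\eta}\|\sigma(\cdot,\eta)\|_{L^\gamma(\Z_p^d)}^\gamma$ appearing in the statement.

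The bridge between these two quantities is where the hypothesis $\sigma\in\Tilde{S}^0_{\rho,0}(\Z_p^d\times\widehat{\Z}_p^d)$ enters decisively. Taking $\alpha=0$ in the symbol estimates (with $m=\delta=0$) gives $\|D_x^\beta\sigma(\cdot,\xi)\|_{L^\infty(\Z_p^d)}\le C_\beta$ for every $\beta$, uniformly in $\xi$; since $D_x^\beta$ acts on the Fourier side as multiplication by a power of $\|\theta\|_p$, this forces $|\widehat{\sigma}(\theta,\xi)|\le C_\beta\|\theta\|_p^{-\beta}$ for all $\beta$, again uniformly in $\xi$. In other words the matrix of $T_\sigma$ is concentrated near its diagonal $\theta=0$ with rapid, $\xi$-uniform off-diagonal decay. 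Using this decay together with Minkowski's integral inequality to pass the $\ell^\gamma(\eta)$-norm through the $x$-integration, and interpolating against the exact $\gamma=2$ identity above, I would aim for two-sided estimates of the form $\sum_\eta\|\widehat{\sigma}(\cdot,\eta)\|_{\ell^\gamma}^\gamma\sim A_\gamma$, which combined with the Schatten--Hausdorff--Young inequalities yield both implications. As a complementary route for the sufficiency, and to handle even integer exponents cleanly, I would use the composition and adjoint calculus recalled in the preliminary proposition: $T_\sigma^*T_\sigma=T_{|\sigma|^2}+R$ with $R$ smoothing, so that $\|T_\sigma\|_{S_\gamma}^\gamma=\|T_\sigma^*T_\sigma\|_{S_{\gamma/2}}^{\gamma/2}$ and, for even $\gamma$, the trace $\mathrm{Tr}\big((T_\sigma^*T_\sigma)^{\gamma/2}\big)$ equals $\sum_\eta\int_{\Z_p^d}|\sigma(x,\eta)|^\gamma\,dx$ modulo traces of smoothing remainders, after which density and interpolation fill in the remaining exponents.

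The main obstacle, and the only genuinely non-formal step, is exactly this bridging between the $\ell^\gamma$-based matrix-coefficient sums and the $L^\gamma$-based symbol norm $A_\gamma$. The Hausdorff--Young inequality by itself gives equality only at $\gamma=2$ and, for $\gamma\ne 2$, points in the wrong direction to close either implication; the rapid off-diagonal decay furnished by the class $\Tilde{S}^0_{\rho,0}$ is what must compensate for this loss and make the comparison sharp in both directions. Controlling the Schatten norms of the smoothing remainders $R$ uniformly enough to survive the interpolation is the secondary technical point to be careful about.
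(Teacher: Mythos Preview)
Your ``complementary route'' is the paper's actual argument, and your primary matrix-coefficient route is not used at all.  The paper proceeds exactly from $T_\sigma^*T_\sigma = T_{|\sigma|^2}+R_1$ with $R_1$ smoothing (the composition/adjoint calculus you cite), but then, instead of stopping at even integers $\gamma$ and interpolating, it invokes a holomorphic functional calculus inside the H\"ormander class (recorded as \cite[Remark 7.2]{p-adicHormanderclasses}) to get directly
\[
|T_\sigma|^\gamma \;=\; T_{|\sigma|^\gamma} + R_\gamma,\qquad R_\gamma\in Op(\Tilde S^{-\infty}),
\]
for \emph{every} $\gamma\ge 1$.  Taking the diagonal sum in the character basis then gives
\[
\mathrm{Tr}\,|T_\sigma|^\gamma
=\sum_{\xi}\int_{\Z_p^d}\bigl[|\sigma(x,\xi)|^\gamma+\sigma_{R_\gamma}(x,\xi)\bigr]\,dx,
\]
and since the $R_\gamma$-contribution is absolutely summable (smoothing), finiteness of the left side is equivalent to finiteness of $\sum_\xi\|\sigma(\cdot,\xi)\|_{L^\gamma}^\gamma$.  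That is the whole proof.

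The gap in your plan is precisely the step ``density and interpolation fill in the remaining exponents.''  You cannot interpolate an if-and-only-if characterization: knowing $\|T_\sigma\|_{S_{2k}}^{2k}\sim A_{2k}$ at even integers does not, by any standard interpolation theorem, yield the two-sided equivalence at non-integer $\gamma$, because neither direction of the equivalence is a linear (or even sublinear) mapping between interpolation scales in the variable $\gamma$.  The functional calculus step is exactly what replaces this; it produces the identity $|T_\sigma|^\gamma=T_{|\sigma|^\gamma}+R_\gamma$ for all $\gamma$ simultaneously, so no interpolation is needed.  Your primary route via the Schatten--Hausdorff--Young inequalities and off-diagonal decay runs into the same difficulty you flag yourself: each of those inequalities is one-sided and, for $\gamma\neq 2$, gives the wrong half of the equivalence relative to the Hausdorff--Young comparison between $\|\widehat\sigma(\cdot,\eta)\|_{\ell^\gamma}$ and $\|\sigma(\cdot,\eta)\|_{L^\gamma}$; the sketch of how the rapid decay would close this is not carried out, and the paper simply avoids the issue.
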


\begin{proof}
First notice $|T_\sigma|^2 := T_\sigma^* T_\sigma = T_{|\sigma|^2} + R_1$. Also $(|T_\sigma|^2)^n = T_{|\sigma|^{2n}} + R_2$ for every $n \in \N$, where $R_1, R_2 \in \Tilde{S}^{-\infty} (\Z_p^d \times \Z_p^d)$. From this we deduce that $f(|T_\sigma|^2) = T_{f(|\sigma|^2)} + R_f$, $R_f \in \Tilde{S}^{-\infty} (\Z_p^d \times \Z_p^d)$ for every holomorphic function, in particular for $f(z)=z^{\gamma}$, where the non integer powers are defined in terms of the complex logarithm. See \cite[Remark 7.2]{p-adicHormanderclasses}. Thus $|T_\sigma|^\gamma = T_{|\sigma|^\gamma} + R_\gamma$ for some $R_\gamma \in \Tilde{S}^{-\infty} (\Z_p^d \times \Z_p^d)$. Finally $$Tr(|T_\sigma|^\gamma)= \sum_{\xi \in \widehat{\Z}_p^d} ( |T_\sigma|^{\gamma} \chi_p (\xi \cdot x) , \chi_p (\xi \cdot x) )_{L^2 (\Z_p^d)} = \sum_{\xi \in \widehat{\Z}_p^d} \int_{\Z_p^d} [ | \sigma (x, \xi)|^\gamma + \sigma_{R_\gamma} (x , \xi) ] dx,$$concluding the proof.   
\end{proof}
\begin{coro}\label{corohilbertschmitpseudos}
Let $T_\sigma$ be a pseudo-differential operator with symbol $\sigma \in \Tilde{S}^0_{\rho, 0} (\Z_p^d \times \widehat{\Z}_p^d)$. Then $T_\sigma$ extends to a Hilbert-Schmit operator if and only if $$||T_\sigma||_{HS}^2 = \sum_{\xi \in \widehat{\Z}_p^d} || \sigma (\cdot, \xi) ||_{L^2 (\Z_p^d) }^2 < \infty.$$
\end{coro}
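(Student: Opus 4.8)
The plan is to recognise that the Hilbert--Schmidt class is exactly the Schatten--von Neumann class $S_2 (L^2 (\Z_p^d))$, so that the statement is the case $\gamma = 2$ of the previous theorem, which already yields the equivalence between Hilbert--Schmidt membership and the finiteness of $\sum_{\xi \in \widehat{\Z}_p^d} ||\sigma (\cdot , \xi)||_{L^2 (\Z_p^d)}^2$. To obtain the stated \emph{equality} of norms, rather than mere finiteness, I would bypass the symbolic calculus and compute the Hilbert--Schmidt norm directly against the orthonormal basis of characters.

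First I would recall that for any orthonormal basis $\{e_j\}$ of a Hilbert space one has $||T||_{HS}^2 = \sum_j ||T e_j||^2$, and that $T$ is Hilbert--Schmidt precisely when this sum is finite. By the Peter--Weyl theorem the family $\{\chi_p (\xi \cdot x)\}_{\xi \in \widehat{\Z}_p^d}$ is an orthonormal basis of $L^2 (\Z_p^d)$, so I would evaluate $T_\sigma$ on each such character. Since the Fourier coefficients of a character satisfy $\widehat{\chi_p (\xi \cdot x)}(\eta) = \delta_{\xi \eta}$, the defining formula for a pseudo-differential operator collapses to $T_\sigma \chi_p (\xi \cdot x) = \sigma (x , \xi) \chi_p (\xi \cdot x)$.

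Next, using $|\chi_p (\xi \cdot x)| = 1$ for all $x \in \Z_p^d$, I would compute $||T_\sigma \chi_p (\xi \cdot x)||_{L^2 (\Z_p^d)}^2 = \int_{\Z_p^d} |\sigma (x , \xi)|^2 dx = ||\sigma (\cdot , \xi)||_{L^2 (\Z_p^d)}^2$. Summing over $\xi \in \widehat{\Z}_p^d$ then gives $||T_\sigma||_{HS}^2 = \sum_{\xi \in \widehat{\Z}_p^d} ||\sigma (\cdot , \xi)||_{L^2 (\Z_p^d)}^2$, and $T_\sigma$ is Hilbert--Schmidt exactly when the right-hand side is finite.

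There is essentially no hard step here: the identity holds verbatim upon expanding in the character basis, and the Hörmander class hypothesis $\sigma \in \Tilde{S}^0_{\rho , 0} (\Z_p^d \times \widehat{\Z}_p^d)$ only serves to guarantee that $T_\sigma$ is an honestly densely defined operator to which the Schatten machinery applies. The one point worth noting is that this direct computation is cleaner than specialising the proof of the previous theorem to $\gamma = 2$, since that route would produce an extra contribution from the trace of the smoothing remainder $R_1$ in $|T_\sigma|^2 = T_{|\sigma|^2} + R_1$; the orthonormal-basis argument sidesteps this entirely and delivers the exact equality of norms asserted in the corollary.
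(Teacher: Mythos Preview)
Your argument is correct and in fact cleaner than the route the paper implicitly takes. In the paper the corollary is stated without proof, as the case $\gamma = 2$ of the preceding theorem; that theorem is proved via the symbolic calculus identity $|T_\sigma|^\gamma = T_{|\sigma|^\gamma} + R_\gamma$ with $R_\gamma$ smoothing, and then a trace computation. Specialising that argument to $\gamma = 2$ yields the ``if and only if'' for finiteness (since the smoothing remainder is automatically trace class), but leaves a residual term $\sum_\xi \int_{\Z_p^d} \sigma_{R_\gamma}(x,\xi)\,dx$ that one would still have to argue away to obtain the exact equality of norms asserted in the corollary. Your direct computation against the character basis sidesteps the symbolic calculus entirely: the identity $T_\sigma \chi_p(\xi \cdot x) = \sigma(x,\xi)\chi_p(\xi \cdot x)$ and $|\chi_p(\xi \cdot x)| = 1$ give the equality $\|T_\sigma\|_{HS}^2 = \sum_\xi \|\sigma(\cdot,\xi)\|_{L^2}^2$ on the nose, with no remainder to control. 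The H\"ormander hypothesis is, as you say, only there to ensure $T_\sigma$ is a well-defined bounded operator on $L^2(\Z_p^d)$ to begin with; the Hilbert--Schmidt identity itself holds for any symbol with $\sigma(\cdot,\xi) \in L^2(\Z_p^d)$.
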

Now we turn our attention to individual singular values of pseudo-differential operators. In H{\"o}rmander classes these numbers may be estimated in terms of the associated symbol thanks to a simple but powerful tool:

\begin{pro}\label{proinftynorm}
Let $T_\sigma \in Op(\Tilde{S}^0_{0,0}(\Z_p^d \times \widehat{\Z}_p^d))$ be a pseudo-differential operator. Define on the algebra $Op(S^0_{0,0}(\Z_p^d \times \widehat{\Z}_p^d))$ the norms $||\cdot ||_{L^\infty}$ and $||\cdot ||_{\ell^\infty}$ by $$||T_\sigma ||_{L^\infty} := \sup_{\xi \in \widehat{\Z}_p^d} ||\sigma(x , \xi)||_{L^\infty (\Z_p^d)},$$ and $$|| T_\sigma ||_{\ell^\infty} := \sup_{\xi \in \widehat{\Z}_p^d} \sup_{\eta \in \widehat{\Z}_p^d} |\widehat{\sigma} (\eta , \xi)|.$$Then  $||T_\sigma||_{L^\infty} \leq ||T_\sigma||_{\mathcal{L}(L^2(\Z_p^d))} \lesssim  ||T_\sigma||_{L^\infty}$.
\end{pro}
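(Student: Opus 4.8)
The plan is to prove the two inequalities separately, treating first the robust computations they share. The starting point is the reproducing identity $\sigma(x,\xi)=\overline{\chi_p(\xi\cdot x)}\,(T_\sigma\chi_p(\xi\cdot{}))(x)$: testing $T_\sigma$ against the $L^2$-normalized character $\chi_p(\xi\cdot{})$ and using Parseval gives at once $\|\sigma(\cdot,\xi)\|_{L^2(\Z_p^d)}\le\|T_\sigma\|_{\mathcal{L}(L^2(\Z_p^d))}$ for every fixed $\xi$, whence $\|T_\sigma\|_{\ell^\infty}=\sup_{\xi,\eta}|\widehat{\sigma}(\eta,\xi)|\le\|T_\sigma\|_{\mathcal{L}(L^2(\Z_p^d))}$, since $\sup_\eta|\widehat{\sigma}(\eta,\xi)|\le\|\sigma(\cdot,\xi)\|_{L^2}$. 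The other tool I would set up is the Fourier expansion of the symbol in its first variable, $\sigma(x,\xi)=\sum_{\eta}\widehat{\sigma}(\eta,\xi)\chi_p(\eta\cdot x)$, which turns the operator into the superposition $T_\sigma=\sum_{\eta}M_\eta\,T_{\widehat{\sigma}_\eta}$, where $M_\eta$ is modulation by $\chi_p(\eta\cdot x)$ and $T_{\widehat{\sigma}_\eta}$ is the Fourier multiplier with symbol $\xi\mapsto\widehat{\sigma}(\eta,\xi)$.

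For the right-hand estimate $\|T_\sigma\|_{\mathcal{L}(L^2(\Z_p^d))}\lesssim\|T_\sigma\|_{L^\infty}$ I would run the triangle inequality on this decomposition. Each modulation is unitary and the $L^2$-norm of a Fourier multiplier equals the supremum of its symbol, so $\|M_\eta T_{\widehat{\sigma}_\eta}\|_{\mathcal{L}(L^2)}=\sup_\xi|\widehat{\sigma}(\eta,\xi)|$ and hence $\|T_\sigma\|_{\mathcal{L}(L^2)}\le\sum_{\eta}\sup_\xi|\widehat{\sigma}(\eta,\xi)|$. Convergence of this series is forced by the defining estimates of $\Tilde{S}^0_{0,0}$: since the character $\chi_p(\eta\cdot x)$ is an eigenfunction of $D_x^\beta$, integration by parts gives $|\widehat{\sigma}(\eta,\xi)|\le\langle\eta\rangle^{-\beta}\|D_x^\beta\sigma(\cdot,\xi)\|_{L^1(\Z_p^d)}\lesssim\langle\eta\rangle^{-\beta}$ uniformly in $\xi$, so choosing $\beta>d$ makes the $\eta$-sum finite; the leading term is exactly $\|T_\sigma\|_{\ell^\infty}\le\|T_\sigma\|_{L^\infty}$ and the remaining terms contribute the implied constant coming from finitely many symbol seminorms, in the spirit of a Calderón–Vaillancourt bound.

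For the left-hand estimate $\|T_\sigma\|_{L^\infty}\le\|T_\sigma\|_{\mathcal{L}(L^2(\Z_p^d))}$ I would upgrade the $L^2$-control of the slices $\sigma(\cdot,\xi)$ to their pointwise supremum by a concentration argument adapted to the ultrametric structure. By the Sobolev embedding invoked in the proof of Lemma \ref{gohbers} the family $\{\sigma(\cdot,\xi)\}_\xi$ is equicontinuous, so for fixed $\xi_0$ there is a point $x_0$ realizing $\|\sigma(\cdot,\xi_0)\|_{L^\infty}$; I would then test $T_\sigma$ on the normalized modulated indicators $f_N:=|B_N|^{-1/2}\chi_p(\xi_0\cdot{})\mathbf{1}_{B_N}$ of the balls $B_N:=x_0+p^N\Z_p^d$, whose transform $\widehat{f_N}$ is supported on the frequency band $\{\xi:\|\xi-\xi_0\|_p\le p^N\}$. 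The main obstacle is precisely here: the space localization scale $p^{-N}$ and the frequency band radius $p^N$ are reciprocal, so good localization of $f_N$ near $x_0$ inevitably excites a wide band of frequencies around $\xi_0$, and one cannot read off a single slice $\sigma(\cdot,\xi_0)$ directly. Reconciling the two localizations is the delicate heart of the argument, and I expect it to require choosing $N$ so that the band radius $p^N$ stays within the frequency regularity scale $\langle\xi_0\rangle^{\rho}$ dictated by the $\triangleplus_\eta$ estimates, so that $\sigma(x,\xi)\approx\sigma(x,\xi_0)$ across the excited band while equicontinuity simultaneously freezes the $x$-dependence near $x_0$; only then does $\|T_\sigma f_N\|_{L^2}$ approach $|\sigma(x_0,\xi_0)|$, and taking the supremum over $\xi_0$ closes the estimate.
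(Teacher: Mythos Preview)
Your upper bound is fine and essentially matches the paper's: the paper also passes through $\|T_\sigma\|_{\ell^\infty}$ and appeals to the rapid decay of $\widehat{\sigma}(\eta,\xi)$ in $\eta$ coming from the symbol class (it cites the matrix class $\mathcal{D}$ of \cite[Theorem~12.6.9]{ruzhansky1} rather than spelling out the Calder\'on--Vaillancourt-type sum you wrote, but the mechanism is the same). In both cases the implied constant depends on finitely many seminorms of the class, not just on $\|T_\sigma\|_{L^\infty}$.

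For the lower bound, however, there is a genuine gap. You correctly identify the uncertainty obstruction in your test-function scheme, but your proposed cure --- choosing $N$ so that the frequency band $\{\xi:\|\xi-\xi_0\|_p\le p^N\}$ stays inside the regularity scale $\langle\xi_0\rangle^{\rho}$ --- collapses here because the proposition is stated in the class $\tilde S^0_{0,0}$, i.e.\ $\rho=0$. The smallest nonzero step in $\widehat{\Z}_p^d$ already has $\|\eta\|_p=p$, and the difference estimates $|\triangleplus_\eta^\xi\sigma|\le C_\alpha\|\eta\|_p^\alpha$ give nothing better than $O(1)$ for any $\eta\neq0$; so as soon as $N\ge1$ you cannot freeze $\sigma(x,\cdot)$ across the excited band, while $N=0$ just returns $\|\sigma(\cdot,\xi_0)\|_{L^2}$ rather than the $L^\infty$-norm. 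The argument as written does not close.

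The paper avoids the obstacle altogether by localising the \emph{output} rather than the input. One applies $T_\sigma$ to the pure character $\chi_p(\xi\cdot{})$ (a single frequency, no band to control), so that $T_\sigma\chi_p(\xi\cdot{})=\sigma(\cdot,\xi)\chi_p(\xi\cdot{})$, and only then multiplies by the normalised indicator
\[
M_\xi f(x):=\frac{\psi_\xi(x)}{|B(x_\xi,\delta)|^{1/2}}\,f(x),
\]
where $x_\xi$ realises $\|\sigma(\cdot,\xi)\|_{L^\infty}$ and $\psi_\xi=\mathbf{1}_{B(x_\xi,\delta)}$. Since $\|M_\xi\|_{\mathcal L(L^2)}\le1$, one gets
\[
\|T_\sigma\|_{\mathcal L(L^2)}\ \ge\ \|M_\xi T_\sigma\chi_p(\xi\cdot{})\|_{L^2}
=\Big(\frac{1}{|B(x_\xi,\delta)|}\int_{B(x_\xi,\delta)}|\sigma(x,\xi)|^2\,dx\Big)^{1/2}
\ \ge\ \|\sigma(\cdot,\xi)\|_{L^\infty}-\varepsilon,
\]
the last step by the equicontinuity you already invoked. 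Taking $\varepsilon\to0$ and then the supremum in $\xi$ finishes the lower bound with no uncertainty trade-off at all.
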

During the proof of the above proposition we will the notation of \cite[Section 6]{p-adicHormanderclasses}.
\begin{proof}
Take an $x_\xi \in \Z_p^d$ such that $|\sigma(x_\xi , \xi)|= ||\sigma(\cdot , \xi)||_{L^\infty (\Z_p^d)}$. Take a ball $B (x_\xi , \delta(\xi))$ small enough around $x_\xi$. Define the multiplication operator $$M_\xi f (x):= \frac{\psi_\xi (x)}{|B (x_\xi , \delta(\xi))|^{1/2}} f(x),$$where $|B (x_\xi , \delta(\xi))|$ is the measure of $B (x_\xi , \delta(\xi))$. Then: \begin{align*}
    ||T_\sigma||_{\mathcal{L}(L^2(\Z_p^d))} &\geq ||M_\xi T_\sigma||_{\mathcal{L}(L^2(\Z_p^d))}\\ & \geq||M_\xi T_\sigma \chi_p (\xi \cdot x)||_{L^2 (\Z_p^d)} \\ &= \Big( \int_{\Z_p^d} \frac{\psi_\xi (x)}{|B (x_\xi , \delta(\xi))|} |\sigma(x , \xi)|^2 dx \Big)^{1/2} \\ & \geq ||\sigma(\cdot , \xi)||_{L^\infty (\Z_p^d)} - \varepsilon, 
\end{align*}for all $\varepsilon>0$ and every $\xi \in \widehat{\Z}_p^d$. In the opposite direction just note that the collection of infinite matrices $M_\sigma$ associated to an operator in $Op(\Tilde{S}^0_{0,0}(\Z_p^d \times \widehat{\Z}_p^d))$, which is the intersection of all the Schur classes as it is explained in \cite[Section 6]{p-adicHormanderclasses}, equals the class $\mathcal{D}$ in \cite[Theorem 12.6.9]{ruzhansky1}. Hence the following holds:
$$||T_\sigma||_{\mathcal{L}(L^2(\Z_p^d))} \lesssim ||T_\sigma||_{\ell^\infty} \leq ||T_\sigma||_{L^\infty}.$$
\end{proof}

\begin{coro}\label{coroequivopL2norm}
Let $T_\sigma \in Op(\Tilde{S}^0_{0,0}(\Z_p^d \times \widehat{\Z}_p^d))$ be a pseudo-differential operator. Then $$ \sup_{\xi \in \widehat{\Z}_p^d} ||\sigma(\cdot , \xi)||_{L^2 (\Z_p^d)} \leq ||T_\sigma||_{\mathcal{L}(L^2(\Z_p^d))} \lesssim \sup_{\xi \in \widehat{\Z}_p^d} ||\sigma(\cdot , \xi)||_{L^2 (\Z_p^d)}.$$
\end{coro}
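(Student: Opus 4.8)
The plan is to bootstrap the norm equivalence already established in Proposition \ref{proinftynorm}, replacing the $L^\infty$-norm of the symbol by its $L^2$-norm. Both inequalities will follow by sandwiching $\sup_{\xi} ||\sigma(\cdot,\xi)||_{L^2(\Z_p^d)}$ between the two sides of Proposition \ref{proinftynorm}, using only elementary facts about the normalized Haar measure on $\Z_p^d$ and about the Fourier coefficients $\widehat{\sigma}(\eta,\xi)$.

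For the lower bound, I would first note that since the Haar measure on $\Z_p^d$ is normalized to total mass one, Jensen's (equivalently Hölder's) inequality gives $||\sigma(\cdot,\xi)||_{L^2(\Z_p^d)} \leq ||\sigma(\cdot,\xi)||_{L^\infty(\Z_p^d)}$ for each fixed $\xi$. Taking the supremum over $\xi \in \widehat{\Z}_p^d$ yields $\sup_{\xi} ||\sigma(\cdot,\xi)||_{L^2(\Z_p^d)} \leq ||T_\sigma||_{L^\infty}$, and the left-hand inequality of Proposition \ref{proinftynorm}, namely $||T_\sigma||_{L^\infty} \leq ||T_\sigma||_{\mathcal{L}(L^2(\Z_p^d))}$, closes this direction.

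For the upper bound, the key observation is that each Fourier coefficient $\widehat{\sigma}(\eta,\xi) = \int_{\Z_p^d}\sigma(x,\xi)\overline{\chi_p(\eta\cdot x)}\,dx$ is controlled by the $L^2$-norm of $x\mapsto\sigma(x,\xi)$ via Cauchy--Schwarz: since $||\chi_p(\eta\cdot\,)||_{L^2(\Z_p^d)}=1$, we obtain $|\widehat{\sigma}(\eta,\xi)| \leq ||\sigma(\cdot,\xi)||_{L^2(\Z_p^d)}$ uniformly in $\eta$. Taking the supremum over both $\eta$ and $\xi$ gives $||T_\sigma||_{\ell^\infty} \leq \sup_{\xi} ||\sigma(\cdot,\xi)||_{L^2(\Z_p^d)}$. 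Combining this with the right-hand estimate $||T_\sigma||_{\mathcal{L}(L^2(\Z_p^d))} \lesssim ||T_\sigma||_{\ell^\infty}$ obtained in the proof of Proposition \ref{proinftynorm} (through the identification of the associated infinite matrices with the Schur class $\mathcal{D}$) yields the desired upper bound.

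I do not expect any serious obstacle here: the whole analytic content is already packaged in Proposition \ref{proinftynorm}, and the corollary only requires inserting the two elementary comparisons $||\cdot||_{L^2}\le||\cdot||_{L^\infty}$ and $|\widehat{\sigma}(\eta,\xi)|\le||\sigma(\cdot,\xi)||_{L^2}$ at the appropriate ends of the chain of inequalities. The only point demanding a little care is ensuring that the implicit constant stays uniform in $\xi$, which it does because the constant in $||T_\sigma||_{\mathcal{L}(L^2(\Z_p^d))}\lesssim||T_\sigma||_{\ell^\infty}$ originates from the Schur-class estimate and is therefore independent of the individual frequencies.
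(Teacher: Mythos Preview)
Your proposal is correct and matches exactly the deduction the paper intends by labeling this a corollary of Proposition \ref{proinftynorm}: insert $\|\cdot\|_{L^2}\le\|\cdot\|_{L^\infty}$ on the left and $|\widehat\sigma(\eta,\xi)|\le\|\sigma(\cdot,\xi)\|_{L^2}$ on the right of the chain $\|T_\sigma\|_{L^\infty}\le\|T_\sigma\|_{\mathcal L(L^2)}\lesssim\|T_\sigma\|_{\ell^\infty}$ established there. (For the lower bound one could alternatively test $T_\sigma$ directly on the characters, since $\|T_\sigma\chi_p(\xi\cdot\,)\|_{L^2}=\|\sigma(\cdot,\xi)\|_{L^2}$, but your route through $L^\infty$ is equally valid.)
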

Proposition \ref{proinftynorm} and its corollary are a convenient change of norms in the H{\"o}rmander class $Op(\Tilde{S}^0_{0,0}(\Z_p^d \times \widehat{\Z}_p^d))$ that will serve us to estimate the singular values of $T_\sigma$. Before continuing we need to make an important remark.
\begin{rem}\label{remdistancecompact}
The following equality holds true for every $T_\sigma \in Op(\Tilde{S}^0_{0,0}(\Z_p^d \times \widehat{\Z}_p^d))$:$$\inf_{K \in \mathfrak{K}(L^2 (\Z_p^d))} ||T_\sigma - K||_{\mathcal{L}(L^2 (\Z_p^d))} = \inf_{K \in \Tilde{S}^{-\infty}(\Z_p^d \times \widehat{\Z}_p^d) } ||T_\sigma - K||_{\mathcal{L}(L^2 (\Z_p^d))} .$$The reason is simple. First, every compact operator is approximated by a finite rank operator, let us write it as $$K_n f (x) = \sum_{j \leq n} c_j (f , g_j)_{L^2 (\Z_p^d)} h_j(x),$$ where $g_j, h_j \in L^2(\Z_p^d)$ for $0 \leq j \leq n$. And second, every function in $L^2 (\Z_p^d)$ is approximated by a smooth function. Thus, for every compact operator $K$ we can always find an $n \in \N_0$ and $g_1',..,g_n',h_1',...,h_n' \in C^\infty (\Z_p^d)$such that $K$ is approximated by $$K_n'f (x) = \sum_{j \leq n} c_j (f , g_j')_{L^2 (\Z_p^d)} h_j'(x).$$ The finite rank operator $K_n'$ is bounded between any pair of Sobolev spaces and thus it belong to $Op(\Tilde{S}^{-\infty}(\Z_p^d \times \widehat{\Z}_p^d)).$ See \cite[Proposition 4.14]{p-adicHormanderclasses}.
\end{rem}
\begin{lema}\label{lemmasvalus}
Let $T_\sigma \in Op(\Tilde{S}^0_{0,0}(\Z_p^d \times \widehat{\Z}_p^d))$ be a compact operator. Give an order to the set $\widehat{\Z}_p^d$, let us say $\widehat{\Z}_p^d = \{\xi_k\}_{k \in \N_0}$ in such a way that $$l_1 \leq l_2 \implies ||\sigma(x , \xi_{l_1})||_{L^2 (\Z_p^d)} \geq ||\sigma(x , \xi_{l_2})||_{L^2 (\Z_p^d)} .$$ Then $$|| \sigma(x , \xi_k) ||_{L^2 (\Z_p^d)} \leq  s_k (T_\sigma) \lesssim || \sigma(x , \xi_k) ||_{L^2 (\Z_p^d)}.$$
\end{lema}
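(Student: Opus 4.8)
The plan is to estimate the singular numbers $s_k(T_\sigma)$ from both sides by comparing $T_\sigma$ with carefully chosen finite-rank operators, using the equivalence of norms established in Proposition \ref{proinftynorm} and its Corollary \ref{coroequivopL2norm} together with the key identity in Remark \ref{remdistancecompact}. Recall that for the approximation numbers, which form a particular $s$-scale, one has $a_k(T_\sigma) = \inf\{\|T_\sigma - A\| : \operatorname{rank}(A) < k\}$, and by Remark \ref{remdistancecompact} the infimum over finite-rank operators can be taken over finite-rank operators whose symbols still lie in the Hörmander calculus $Op(\Tilde{S}^0_{0,0})$. The strategy is to prove the two inequalities for the approximation numbers $a_k$; since all $s$-scales agree in a Hilbert space with the singular numbers, and in particular $a_k(T_\sigma) = s_k(T_\sigma)$, this suffices.

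For the upper bound $s_k(T_\sigma) \lesssim \|\sigma(\cdot,\xi_k)\|_{L^2}$, I would construct an explicit finite-rank approximant. Order $\widehat{\Z}_p^d = \{\xi_l\}_{l \in \N_0}$ as in the statement, so that $\|\sigma(\cdot,\xi_l)\|_{L^2}$ is nonincreasing in $l$. Define the truncated symbol $\sigma_k(x,\xi)$ which equals $\sigma(x,\xi)$ when $\xi \in \{\xi_0,\dots,\xi_{k-1}\}$ and vanishes otherwise; the associated operator $T_{\sigma_k}$ has rank at most $k$ (more precisely, its range is spanned by the finitely many frequencies $\xi_0,\dots,\xi_{k-1}$ times the relevant multiplier data). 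The difference $T_\sigma - T_{\sigma_k}$ has symbol $\sigma - \sigma_k$ supported on the frequencies $\{\xi_l : l \geq k\}$, and applying Corollary \ref{coroequivopL2norm} gives
\begin{equation*}
a_{k+1}(T_\sigma) \leq \|T_\sigma - T_{\sigma_k}\|_{\mathcal{L}(L^2)} \lesssim \sup_{l \geq k} \|\sigma(\cdot,\xi_l)\|_{L^2} = \|\sigma(\cdot,\xi_k)\|_{L^2},
\end{equation*}
where the last equality uses the monotone ordering. A small index bookkeeping between $a_k$ and $a_{k+1}$ then yields the stated upper bound.

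For the lower bound $\|\sigma(\cdot,\xi_k)\|_{L^2} \leq s_k(T_\sigma)$, the idea is dual. Given any finite-rank operator $A$ with $\operatorname{rank}(A) < k$, its range cannot contain all the $k$ images $T_\sigma \chi_p(\xi_0 \cdot x), \dots, T_\sigma \chi_p(\xi_{k-1} \cdot x)$; more usefully, one uses a minimax or Courant--Fischer style argument, applied to $|T_\sigma|$, restricted to the $k$-dimensional subspace spanned by $\{\chi_p(\xi_l \cdot x)\}_{l=0}^{k-1}$. On this subspace I would estimate from below the norm $\|T_\sigma f\|_{L^2}$ using the lower bound in Proposition \ref{proinftynorm} / Corollary \ref{coroequivopL2norm}, namely $\|\sigma(\cdot,\xi)\|_{L^2} \leq \|M_\xi T_\sigma\| \leq \|T_\sigma\|$ via the localizing multiplication operators $M_\xi$ introduced there; since the smallest value of $\|\sigma(\cdot,\xi_l)\|_{L^2}$ over $l < k$ is exactly $\|\sigma(\cdot,\xi_{k-1})\|_{L^2}$ by the ordering, the minimax characterization $s_k(T_\sigma) = \min_{\dim V < k} \max_{f \perp V} \|T_\sigma f\|/\|f\|$ bounds $s_k(T_\sigma)$ below by this quantity.

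The main obstacle I anticipate is the lower bound, specifically making the localization argument genuinely produce a lower bound on a whole $k$-dimensional subspace rather than just on single basis characters. The operators $M_\xi$ in Proposition \ref{proinftynorm} are tailored to a single frequency $\xi$, and for the minimax argument one needs to control $\|T_\sigma f\|_{L^2}$ uniformly over normalized $f$ in the span of several characters, where cross terms and the fact that the maximizing points $x_\xi$ differ between frequencies create interference. The clean way around this is to avoid a direct subspace estimate and instead invoke the general $s$-number machinery: since $\|\sigma(\cdot,\xi_l)\|_{L^2}$ are, up to the equivalence of norms, the diagonal values controlling $T_\sigma$, one reduces to the Hilbert-number lower bound $h_k(T_\sigma) \leq s_k(T_\sigma)$ valid for every $s$-scale, combined with the observation that $T_\sigma$ dominates (in the relevant sense) the diagonal multiplier with entries $\|\sigma(\cdot,\xi_l)\|_{L^2}$. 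I would therefore route the lower bound through Corollary \ref{coroequivopL2norm} applied to the restriction of $T_\sigma$ rather than through an ad hoc subspace computation, keeping the argument parallel to the cited \cite{Velasquez-Rodriguez2019}.
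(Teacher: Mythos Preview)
Your upper bound argument is essentially the paper's: truncate the symbol on the first $k$ frequencies, apply Corollary~\ref{coroequivopL2norm} to the tail, and use the monotone ordering. The index bookkeeping you flag is handled in the paper simply by taking the finite-rank approximant $L_k$ with symbol supported on $\{\xi_0,\dots,\xi_{k-1}\}$ (and $L_0:=0$), so that $a_k(T_\sigma)\le\|T_\sigma-L_k\|$ directly.

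The gap is in your lower bound. You correctly diagnose the difficulty with a Courant--Fischer approach: on a linear combination $f=\sum_{l<k} c_l\,\chi_p(\xi_l\cdot x)$ you cannot easily bound $\|T_\sigma f\|_{L^2}$ from below because the localizers $M_\xi$ are frequency-specific and the cross terms do not cancel. But you never actually close this; the appeal to Hilbert numbers and to ``$T_\sigma$ dominates the diagonal multiplier'' remains a wish, not an argument, and Corollary~\ref{coroequivopL2norm} gives no such domination statement. The paper avoids the whole issue by a one-line elementary trick you overlooked: given any finite-rank $P_k$ with $\operatorname{rank}(P_k)\le k$, write $P_k f=\sum_{j=0}^{k-1} c_j(\varphi_j,f)\psi_j$; among the $k+1$ characters $\chi_p(\xi_0\cdot x),\dots,\chi_p(\xi_k\cdot x)$ there is one, say $\chi_p(\xi_{k_0}\cdot x)$, annihilated by $P_k$. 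Then
\[
\|T_\sigma-P_k\|_{\mathcal L(L^2)}\ \ge\ \|(T_\sigma-P_k)\chi_p(\xi_{k_0}\cdot x)\|_{L^2}
=\|\sigma(\cdot,\xi_{k_0})\chi_p(\xi_{k_0}\cdot x)\|_{L^2}
=\|\sigma(\cdot,\xi_{k_0})\|_{L^2}\ \ge\ \|\sigma(\cdot,\xi_k)\|_{L^2},
\]
using only that $T_\sigma\chi_p(\xi\cdot x)=\sigma(\cdot,\xi)\chi_p(\xi\cdot x)$ and the monotone ordering. No minimax, no $M_\xi$, no subspace estimate: testing on a \emph{single} character in the kernel of the approximant already gives the lower bound, and the ``interference'' problem never arises.
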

\begin{proof}
There is only one $s$-scale on Hilbert spaces so, the singular values $s_k$ of a linear operator acting on Hilbert spaces are exactly equal to the approximation numbers $a_k$. Thus we will estimate the approximation numbers. From the definition of approximation numbers is obvious that $$a_k(T_\sigma) \leq ||T_\sigma - L_k ||_{\mathcal{L}(L^2(\Z_p^d))} \lesssim \sup_{l \geq k} ||\sigma(\cdot , \xi_l)||_{L^2 (\Z_p^d)} = || \sigma(x , \xi_k) ||_{L^2 (\Z_p^d)}  ,$$ where $L_k$ is the finite rank operator with symbol $\sigma (x , \xi_l)$ for $l <k$ and zero otherwise; $L_0 :=0$. Conversely, if $P_k$ is a finite rank operator with $rank(P_k) \leq k$, then consider the collection of $k+1$ linearly independent functions $\{\chi_p (\xi_0 \cdot x) , ..., \chi_p(\xi_k \cdot x)\}$. It is known that $P_k$ may be written as $$P_k f (x) = \sum_{j=0}^{k-1} c_j ( \varphi_j , f)_{L^2 (\Z_p^d)} \psi_j (x) ,$$ and it is obvious that at least one of the functions in $\{\chi_p (\xi_0 \cdot x) , ..., \chi_p(\xi_k \cdot x)\}$ is not in $Span\{\varphi_j \}_{0 \leq j \leq k-1},$ and thus is in its orthogonal complement. Let us say that this function is $\chi_p (\xi_{k_0} \cdot x)$. Hence $$||T_\sigma - P_k||_{\mathcal{L}(L^2 (\Z_p^d))} \geq ||(T_\sigma - P_k) \chi_p (\xi_0 \cdot x)||_{L^2 (\Z_p^d)} = ||\sigma (\cdot , \xi_{k_0})||_{L^2 (\Z_p^d)} \geq || \sigma(x , \xi_k) ||_{L^2 (\Z_p^d)}.$$This conclude the proof. 
\end{proof}

With Lemma \ref{lemmasvalus} we are now ready to classify some sub-ideals of  $Op(\Tilde{S}^0_{0,0}(\Z_p^d \times \widehat{\Z}_p^d))$ in terms of the symbol.

\begin{teo}
Let $T_\sigma \in Op(\Tilde{S}^0_{0,0}(\Z_p^d \times \widehat{\Z}_p^d))$ be a compact pseudo-differential operator. Assume that the sequence $\{||\sigma (x , \xi_k)||_{L^2 (\Z_p^d)}\}_{k \in \N_0}$ is ordered in a non increasing order. Then:
\esp 
\begin{enumerate}
    \item[(i)] $T_\sigma$ is a Dixmier traceable if and only if $$||T_\sigma||_{\mathcal{L}^{(1 , \infty)} (L^2 (\Z_p^d))}:= \sup_{N \geq 1} \frac{1}{\log(1 + N)} \sum_{0 \leq k \leq N} ||\sigma (\cdot , \xi_k)||_{L^2 (\Z_p^d)} < \infty.$$
    \item[(ii)] $T_\sigma$ is of $(s , w)$-type if and only if $$||T_\sigma||_{\mathfrak{L}_{s,w} (L^2 (\Z_p^d))} := \Big( \sum_{k \in \N_0} [k^{\frac{1}{r} - \frac{1}{w}} ||\sigma (\cdot , \xi_k)||_{L^2 (\Z_p^d)}]^w \Big)^{1/w}.$$
    \item[(iii)] $T_\sigma$ belong to the Schatten-von Newmann class $S_\gamma (L^2 (\Z_p^d))$ if and only if $$||T_\sigma||_{S_\gamma (L^2 (\Z_p^d))} := \Big( \sum_{k \in \N_0}||\sigma (\cdot , \xi_k)||_{L^2 (\Z_p^d)}^\gamma \Big)^{1/\gamma}< \infty .$$
\end{enumerate}
\end{teo}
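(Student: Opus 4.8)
The plan is to deduce all three equivalences from the two-sided estimate of Lemma \ref{lemmasvalus} by a purely formal comparison argument, with essentially no new analysis. Since the underlying space $L^2(\Z_p^d)$ is a Hilbert space, all $s$-scales coincide with the sequence of singular numbers, so the single estimate of Lemma \ref{lemmasvalus} feeds simultaneously into the Dixmier, the $(s,w)$-type, and the Schatten functionals. Under the non-increasing ordering assumed in the statement, that lemma provides a constant $C>0$, independent of $k$, with
$$\|\sigma(\cdot,\xi_k)\|_{L^2(\Z_p^d)} \leq s_k(T_\sigma) \leq C\,\|\sigma(\cdot,\xi_k)\|_{L^2(\Z_p^d)}, \qquad k \in \N_0.$$
Because each of the three defining functionals is monotone and positively homogeneous in the sequence of singular numbers, this index-by-index sandwich transfers at once to a sandwich between the operator quantity and the corresponding symbol quantity, so that finiteness of one forces finiteness of the other and the two are equivalent as norms.

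I would begin with the Schatten statement (iii), which is the most transparent. Raising the displayed estimate to the power $\gamma \geq 1$ and summing over $k$ gives
$$\sum_{k \in \N_0} \|\sigma(\cdot,\xi_k)\|_{L^2(\Z_p^d)}^\gamma \leq \sum_{k \in \N_0} s_k(T_\sigma)^\gamma \leq C^\gamma \sum_{k \in \N_0} \|\sigma(\cdot,\xi_k)\|_{L^2(\Z_p^d)}^\gamma,$$
so $T_\sigma \in S_\gamma(L^2(\Z_p^d))$ exactly when the rightmost sum is finite. For the $(s,w)$-type statement (ii) the only change is to insert the nonnegative weight $k^{\frac{1}{r}-\frac{1}{w}}$, which does not depend on the operator, before raising to the $w$-th power and summing; the weight cancels out of the constant, leaving $\|T_\sigma\|_{\mathfrak{L}_{s,w}(L^2(\Z_p^d))}$ comparable to the displayed symbol expression. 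For the Dixmier case (i), I would instead form the partial sums $\sum_{0\leq k\leq N}$, divide by $\log(1+N)$, and take the supremum over $N\geq 1$, obtaining
$$\sup_{N\geq1}\frac{1}{\log(1+N)}\sum_{0\leq k\leq N}\|\sigma(\cdot,\xi_k)\|_{L^2(\Z_p^d)} \leq \|T_\sigma\|_{\mathcal{L}^{(1,\infty)}(L^2(\Z_p^d))} \leq C\sup_{N\geq1}\frac{1}{\log(1+N)}\sum_{0\leq k\leq N}\|\sigma(\cdot,\xi_k)\|_{L^2(\Z_p^d)},$$
so Dixmier traceability is equivalent to finiteness of the right-hand supremum.

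There is no genuine obstacle here beyond bookkeeping; the one point requiring care --- and it is already built into the hypotheses --- is the alignment of indices. Lemma \ref{lemmasvalus} matches $s_k(T_\sigma)$ with $\|\sigma(\cdot,\xi_k)\|_{L^2(\Z_p^d)}$ only once the symbol slices are listed in non-increasing $L^2$-norm, and the standing assumption that $\{\|\sigma(\cdot,\xi_k)\|_{L^2(\Z_p^d)}\}_k$ is non-increasing is precisely what makes the comparison hold term by term with a single constant, rather than merely up to a rearrangement. Once this alignment is in place, the three equivalences follow formally from the singular-value estimate and the monotonicity of the respective symmetric functionals.
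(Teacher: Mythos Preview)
Your proposal is correct and matches the paper's approach: the theorem is stated as an immediate consequence of Lemma \ref{lemmasvalus}, and your argument simply spells out the formal passage from the two-sided singular-value estimate to each of the three functionals. The paper gives no further details, so your write-up is in fact more explicit than the original.
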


\begin{rem}
The proof of the above theorem does not use anything special of the compact abelian group $\Z_p^d$. Thus the same proof works on more general groups. For example the statement of the above theorem holds true if we replace our $p$-adic H{\"o}rmander classes with the periodic classes $S^0_{0,0} (\T^d \times \Z^d)$.
\end{rem}

\section{\textbf{The Fredholm spectrum}}
The purpose of this section is to provide an explicit formula for the Fredholm spectrum of a pseudo-differential operator. Since the linear spaces that we we will consider here are only $L^2$-based Sobolev spaces, and the linear operators are all pseudo-differential operators with symbol in a H{\"o}rmaner class, we will modify subtly the definition of Fredholm resolvent set and Fredholm spectrum. This modification is valid and yields to the same spectrum for pseudo-differential operators because of the equivalence between Fredholmness and ellipticity in H{\"o}rmander classes proven in \cite[Theorem 8.11]{p-adicHormanderclasses}.
\begin{defi}\normalfont\label{Adaptedfredholmdefi}
Let $s , t \in \R$. Let $T\in \mathcal{L} (H^s_2 (\Z_p^d), H^t_2 (\Z_p^d))$ be a linear operator. We will say that $T$ is a Fredholm operator if it is invertible modulo an smoothing operator, that is, if there exist a linear operator $T^\bot \in \mathcal{L}(H^t_2 (\Z_p^d) , H^s_2 (\Z_p^d))$ such that $$T^\bot T - I , T T^\bot - I \in Op(S^{-\infty} (\Z_p^d \times \widehat{\Z}_p^d)).$$Consequently, when $s\geq t$ we define $$Res_F (T):= \{\lambda \in \C \esp : \esp T- \lambda \iota_{s,t} \esp \text{is invertible modulo} \esp Op(S^{-\infty} (\Z_p^d \times \widehat{\Z}_p^d)) \}.$$Here $\iota_{s,t}:H^s_2 (\Z_p^d) \to H^t_2 (\Z_p^d)$ is the inclusion operator. When $s=t$, $\iota_{s,s} = I$ is the identity operator.
\end{defi}
Using the symbolic calculus in \cite{p-adicHormanderclasses} we can provide an explicit formula for the Fredholm spectrum of a pseudo-differential operator in terms of its associated symbol.

\begin{teo}\label{Fredholmspectrumformula}
Let $0 < \rho \leq 1$ be a real number. Let $T_\sigma$ be a pseudo-differential operator with associated symbol in the H{\"o}rmander class $\Tilde{S}^0_{\rho,0} (\Z_p^d \times \widehat{\Z}_p^d)$. Then, considering $T_\sigma$ as a bounded operator on $L^2(\Z_p^d)$, we have$$Spec_F (T_\sigma) = \bigcap_{n \in \N_0} \overline{\bigcup_{||\xi||_p \geq n} \{\sigma (x,\xi) \esp : \esp x \in \Z_p^d\}}. $$
\end{teo}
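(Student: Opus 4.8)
The plan is to reduce the computation of $Spec_F(T_\sigma)$ entirely to a set-theoretic reformulation of ellipticity, leaning on the equivalence between Fredholmness and ellipticity in the Hörmander classes proven in \cite[Theorem 8.11]{p-adicHormanderclasses}, which is exactly what justifies the modified Definition \ref{Adaptedfredholmdefi}. The starting observation is that $\lambda \in Res_F(T_\sigma)$ precisely when $T_\sigma - \lambda I$ is Fredholm in the sense of Definition \ref{Adaptedfredholmdefi}. Since $\lambda I$ has the constant symbol $\lambda$, which lies in $\Tilde{S}^0_{\rho,0}(\Z_p^d \times \widehat{\Z}_p^d)$, the operator $T_\sigma - \lambda I$ is again a pseudo-differential operator with symbol $\sigma - \lambda$ in the same class. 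Hence I would first record that $\lambda \in Res_F(T_\sigma)$ if and only if $\sigma - \lambda$ is elliptic.

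Next I would spell out ellipticity of order zero explicitly: $\sigma - \lambda$ is elliptic if and only if there exist $C>0$ and $n \in \N_0$ such that $|\sigma(x,\xi) - \lambda| \geq C$ for all $x \in \Z_p^d$ and all $\xi$ with $\|\xi\|_p \geq n$. In other words, $\lambda \in Res_F(T_\sigma)$ exactly when $\lambda$ is separated by a fixed positive distance from the range $\{\sigma(x,\xi) : x \in \Z_p^d,\ \|\xi\|_p \geq n\}$ for some $n$. Writing $R_n := \overline{\bigcup_{\|\xi\|_p \geq n} \{\sigma(x,\xi) : x \in \Z_p^d\}}$, a point has positive distance to $\bigcup_{\|\xi\|_p \geq n}\{\sigma(x,\xi)\}$ for some $n$ if and only if it lies outside $R_n$ for some $n$, since a point lies outside a closed set precisely when it has positive distance to it. Carrying out the De Morgan dualization, $\lambda \in Res_F(T_\sigma)$ iff $\lambda \in \bigcup_n (\C \setminus R_n) = \C \setminus \bigcap_n R_n$, and taking complements yields $Spec_F(T_\sigma) = \bigcap_n R_n$, which is the asserted formula.

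The main point requiring care is that the ellipticity-Fredholmness equivalence applies verbatim to the shifted symbol $\sigma - \lambda$. One must check that $\sigma - \lambda$ genuinely lies in $\Tilde{S}^0_{\rho,0}$, which is immediate because the constant symbol has all its nonzero-order difference operators and $x$-derivatives vanishing and $\langle \xi \rangle^0 = 1$, and that the notion of ellipticity used in \cite[Theorem 8.11]{p-adicHormanderclasses} is exactly the order-zero lower bound $|\sigma(x,\xi)-\lambda| \geq C$ for large $\|\xi\|_p$, uniformly in $x$. The one remaining subtlety is this uniformity in $x$: since $\Z_p^d$ is compact and each $\sigma(\cdot,\xi)$ is continuous, as noted in step (i) of the proof of Lemma \ref{gohbers}, the supremum over $x$ is attained, so there is no gap between the pointwise and uniform formulations and the closure defining $R_n$ faithfully records the limiting values of the symbol as $\|\xi\|_p \to \infty$. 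Beyond this, the argument is essentially a formal manipulation of closures and intersections, so the genuine mathematical weight sits in the cited ellipticity theorem rather than in the present proof.
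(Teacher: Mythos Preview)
Your argument is correct and rests on the same pivot as the paper's proof: $\lambda \in Res_F(T_\sigma)$ if and only if $\sigma - \lambda$ is elliptic of order zero, which you then unpack set-theoretically. The difference is one of packaging. For the inclusion $\bigcap_n R_n \supseteq Spec_F(T_\sigma)$ the paper does not simply cite \cite[Theorem 8.11]{p-adicHormanderclasses} but instead explicitly constructs the parametrix $T_\beta$ with $\beta = (\sigma - \lambda)^{-1}$ for large $\|\xi\|_p$, invoking \cite[Lemma 8.7]{p-adicHormanderclasses} to place $\beta$ in the class. For the reverse inclusion the paper takes a slightly different technical route: rather than appealing to ``Fredholm $\Rightarrow$ elliptic'' as a black box, it uses a $C^*$-algebraic spectral-permanence result (\cite[Theorem A.1.3]{fredholmalg}) to force any parametrix $T^\bot$ to lie in $Op(\Tilde{S}^0_{\rho,0})$, and then reads off the lower bound $|\sigma(x,\xi)-\lambda| \geq C'$ from the symbolic calculus identity $(\sigma - \lambda)\beta - 1 \in \Tilde{S}^{-\infty}$. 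Your version is shorter and entirely legitimate given that Theorem 8.11 is already available; the paper's version is more self-contained in that it re-exposes the mechanism behind that equivalence rather than quoting it.
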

\begin{proof}
We will prove that $$Res_F(T_\sigma) = \bigcup_{n \in \N_0} \C \setminus \overline{\bigcup_{||\xi||_p \geq n} \{\sigma (x,\xi) \esp : \esp x \in \Z_p^d\}}.$$For doing this just take a complex number $\lambda$ in the following set: $$\lambda \in \C \setminus \overline{\bigcup_{||\xi||_p \geq n_0} \{\sigma (x,\xi) \esp : \esp x \in \Z_p^d\}}, $$for some $n_0 \in \N$. Clearly $T_{\sigma - \lambda} \in Op(\Tilde{S}^{0}_{\rho,0} (\Z_p^d \times \widehat{\Z}_p^d))$ is a elliptic operator in the sense of \cite[Definition 8.3]{p-adicHormanderclasses} and by \cite[Lemma 8.7]{p-adicHormanderclasses} the symbol 
\[\beta(x,\xi) := \begin{cases}
\frac{1}{\sigma(x,\xi) - \lambda} & \esp \text{if} \esp ||\xi||_p \geq n_0, \\ 0 & \esp \text{if} \esp ||\xi||_p <n_0,
\end{cases}\]belongs to the H{\"o}rmander class $\Tilde{S}^0_{0,0} (\Z_p^d \times \widehat{\Z}_p^d)$. Moreover, $T_\beta$ is a an inverse of $T_\sigma$ modulo an smoothing operator and then $\lambda \in Res_F(T_\sigma)$. This proves $$ \bigcup_{n \in \N_0} \C \setminus \overline{\bigcup_{||\xi||_p \geq n} \{\sigma (x,\xi) \esp : \esp x \in \Z_p^d\}} \subset Res_F(T_\sigma),$$hence $$Spec_F (T_\sigma) \subseteq \bigcap_{n \in \N_0} \overline{\bigcup_{||\xi||_p \geq n} \{\sigma (x,\xi) \esp : \esp x \in \Z_p^d\}}. $$Conversely, we can use \cite[Theorem A.1.3]{fredholmalg} and the fact that $Op(\Tilde{S}^0_{\rho,0} (\Z_p^d \times \widehat{\Z}_p^d))$ is a $*$-subalgebra of the $C^*$-algebra $\mathcal{L}(L^2 (\Z_p))$ to assure that if $$T^\bot (T_\sigma-\lambda I) - I , (T_\sigma-\lambda I )T^\bot - I \in Op(S^{-\infty} (\Z_p^d \times \widehat{\Z}_p^d)),$$ then $T^\bot = T_\beta \in Op(\Tilde{S}^0_{\rho,0} (\Z_p^d \times \widehat{\Z}_p^d))$ and in this way $$(\sigma (x,\xi) - \lambda) \beta (x,\xi) - 1 \in S^{-\infty} (\Z_p^d \times \widehat{\Z}_p^d).$$Hence for sufficiently large $||\xi||_p$, let us say $||\xi||_p \geq n_0$, we obtain $$|\sigma (x,\xi) - \lambda| \geq \frac{C}{|\beta (x,\xi)|} \geq C',$$proving that $$\lambda \in \C \setminus \overline{\bigcup_{||\xi||_p \geq n_0} \{\sigma (x,\xi) \esp : \esp x \in \Z_p^d\}}.$$This concludes the proof.  
\end{proof}

For the case $T_\sigma \in Op(\Tilde{S}^m_{\rho,0} (\Z_p^d \times \widehat{\Z}_p^d))$, $m>0$, we can prove a similar result but we need the aditional condition of ellipticity.

\begin{coro}\label{Fredholmspectrumformulamgeq0}
Let $T_\sigma$ be a elliptic pseudo-differential operator with associated symbol in the H{\"o}rmander class $\Tilde{S}^m_{\rho,0} (\Z_p^d \times \widehat{\Z}_p^d)$, $m > 0$. Then, considering $T_\sigma$ as a bounded operator in $\mathcal{L} (H^{m}_2 (\Z_p^d), L^2 (\Z_p^d))$, we have$$Spec_F (T_\sigma) = \bigcap_{n \in \N_0} \overline{\bigcup_{||\xi||_p \geq n} \{\sigma (x,\xi) \esp : \esp x \in \Z_p^d\}}. $$
\end{coro}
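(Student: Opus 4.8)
The plan is to follow the proof of Theorem \ref{Fredholmspectrumformula} almost verbatim, the only structural change being the bookkeeping of orders: since $T_\sigma$ now maps $H^m_2(\Z_p^d)$ into $L^2(\Z_p^d)$, any parametrix must map $L^2$ back into $H^m_2$, and hence its symbol should live in the class $\Tilde{S}^{-m}_{0,0}(\Z_p^d \times \widehat{\Z}_p^d)$ rather than in $\Tilde{S}^0_{0,0}(\Z_p^d \times \widehat{\Z}_p^d)$. The role of the new ellipticity hypothesis is precisely to guarantee that this parametrix exists for \emph{every} spectral parameter $\lambda$.

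First I would record the consequence of ellipticity: there are constants $C>0$ and $n_0 \in \N_0$ with $|\sigma(x,\xi)| \geq C\langle \xi \rangle^m$ for all $x \in \Z_p^d$ and all $||\xi||_p \geq n_0$; combined with the symbol estimate for $\alpha=\beta=0$ this gives $|\sigma(x,\xi)| \asymp \langle \xi \rangle^m$, so that $|\sigma(x,\xi)| \to \infty$ as $||\xi||_p \to \infty$. Two facts follow at once. On the right-hand side, every value $\sigma(x,\xi)$ with $||\xi||_p \geq n$ has modulus at least $C\langle \xi \rangle^m$, so any fixed $z \in \C$ is eventually excluded from $\overline{\bigcup_{||\xi||_p \geq n}\{\sigma(x,\xi): x \in \Z_p^d\}}$, whence the intersection over $n$ is empty. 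On the operator side, for any $\lambda \in \C$ and all $||\xi||_p$ large enough that $C\langle \xi \rangle^m \geq 2|\lambda|$ one has $|\sigma(x,\xi)-\lambda| \geq \tfrac{C}{2}\langle \xi \rangle^m$, so $\sigma-\lambda$ is again elliptic of order $m$.

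With this in hand the inclusion $\supseteq$ proceeds as before. For arbitrary $\lambda$ I would set $\beta(x,\xi):=(\sigma(x,\xi)-\lambda)^{-1}$ for $||\xi||_p \geq n_1$ (with $n_1$ furnished by the ellipticity of $\sigma-\lambda$) and $\beta \equiv 0$ otherwise; by \cite[Lemma 8.7]{p-adicHormanderclasses} one gets $\beta \in \Tilde{S}^{-m}_{0,0}(\Z_p^d \times \widehat{\Z}_p^d)$, and the symbolic calculus of \cite{p-adicHormanderclasses} shows that $T_\beta$ is a two-sided inverse of $T_\sigma - \lambda \iota_{m,0}$ modulo $Op(\Tilde{S}^{-\infty}(\Z_p^d \times \widehat{\Z}_p^d))$, using that the symbol $1$ of $\iota_{m,0}$ reproduces the identity. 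Hence $\lambda \in Res_F(T_\sigma)$ for every $\lambda$, i.e. $Res_F(T_\sigma)=\C$. For the converse inclusion I would invoke the equivalence of Fredholmness and ellipticity \cite[Theorem 8.11]{p-adicHormanderclasses} together with the $*$-subalgebra argument of \cite[Theorem A.1.3]{fredholmalg}, exactly as in Theorem \ref{Fredholmspectrumformula}, to conclude that $Spec_F(T_\sigma)$ is contained in the (now empty) right-hand set. Both sides being empty, the asserted equality holds.

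The main obstacle I anticipate is the order bookkeeping rather than any genuinely new analytic input: one must verify that $\beta \in \Tilde{S}^{-m}_{0,0}(\Z_p^d \times \widehat{\Z}_p^d)$ (not merely that it is bounded), so that $T_\beta$ is continuous from $L^2(\Z_p^d)$ to $H^m_2(\Z_p^d)$, and that the two compositions $T_\beta (T_\sigma-\lambda \iota_{m,0})$ and $(T_\sigma-\lambda \iota_{m,0}) T_\beta$ differ from the identities on $H^m_2(\Z_p^d)$ and $L^2(\Z_p^d)$ respectively by smoothing operators. Once the identity symbol of $\iota_{m,0}$ and the mapping properties of $T_\beta$ are pinned down, these are routine applications of the calculus; the conceptual content is carried entirely by the ellipticity estimate, which simultaneously empties the spectral set and supplies the parametrix.
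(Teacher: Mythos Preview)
Your argument is correct, and in fact you have noticed something the paper's proof does not make explicit: because $m>0$ and $\sigma$ is elliptic, \emph{both} sides of the asserted identity are empty, so the corollary is vacuously true once one checks that every $\lambda\in\C$ lies in $Res_F(T_\sigma)$. Your construction of $\beta\in\Tilde S^{-m}_{0,0}$ as the parametrix symbol is exactly the right bookkeeping of orders, and the appeal to \cite[Lemma 8.7]{p-adicHormanderclasses} is appropriate. The paragraph on the converse inclusion via \cite[Theorem A.1.3]{fredholmalg} is logically superfluous here (you have already shown $Spec_F(T_\sigma)=\emptyset$), but it does no harm.

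The paper proceeds differently: it does not observe the emptiness, but instead argues the two inclusions as in the $m=0$ case. For the direction $Res_F(T_\sigma)\subseteq\bigcup_n\C\setminus\overline{\bigcup_{||\xi||_p\ge n}\{\sigma(x,\xi)\}}$ it conjugates by the order-shifting multiplier $J_m$, writing $J_m T^\bot\, T_\sigma J_{-m}-I\in Op(\Tilde S^{-\infty})$ with $T_\sigma J_{-m}\in Op(\Tilde S^0_{\rho,0})$ and $J_m T^\bot\in\mathcal L(L^2)$, and then invokes the order-$0$ argument of Theorem~\ref{Fredholmspectrumformula} to extract $|\sigma(x,\xi)-\lambda|\ge C\langle\xi\rangle^m$. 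Your route is shorter and more transparent for this particular statement; the paper's $J_m$-reduction is the standard device and has the advantage of making the proof uniform with the $m=0$ case, but it obscures the fact that the result here is really the statement ``elliptic of positive order $\Rightarrow$ Fredholm for every spectral shift''.
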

\begin{proof}
Let us assume $$|\sigma (x , \xi)| \geq C \langle \xi \rangle^m, \esp \esp \text{for} \esp ||\xi||_p \geq n_0.$$ If we take $$\lambda \in \C \setminus \overline{\bigcup_{||\xi||_p \geq n_0} \{\sigma (x,\xi) \esp : \esp x \in \Z_p^d\}},$$then also $\sigma (x , \xi) - \lambda$ is also the symbol of an elliptic operator. Thus \cite[Theorem 8.6]{p-adicHormanderclasses} assure that $T_\sigma - \lambda \iota_{s+m , s}$ is a Fredholm operator. Conversely, if $$T^\bot T_\sigma - I , T_\sigma T^\bot - I \in Op(S^{-\infty} (\Z_p^d \times \widehat{\Z}_p^d)),$$ for some $T^\bot \in \mathcal{L}(H^{s}_2 (\Z_p^d) , H^{s+m}_2 (\Z_p^d))$ then  $$J_{m} T^\bot T_\sigma J_{-m} - I \in Op(S^{-\infty} (\Z_p^d \times \widehat{\Z}_p^d)),$$ and $T_\sigma J_{-m} \in Op(\Tilde{S}^0_{\rho,0} (\Z_p^d \times \widehat{\Z}_p^d))$, $J_{m} T^\bot \in \mathcal{L}(L^2 (\Z_p^d))$. Hence, with the same arguments as in Theorem \ref{Fredholmspectrumformula} we obtain $$|\sigma(x,\xi) - \lambda| \geq C \langle \xi \rangle^{m},$$for $||\xi||_p $ large enough, let us say $||\xi||_p \geq n_0 $, implying that $$\lambda \in \C \setminus \overline{\bigcup_{||\xi||_p \geq n_0} \{\sigma (x,\xi) \esp : \esp x \in \Z_p^d\}}.$$ 
\end{proof}
We can extend somehow the above corollary to $n$-hypoelliptic operators in the sense of \cite[Definition 8.3]{p-adicHormanderclasses} but for that it will be necessary to change our definition of Fredholm spectrum. The reason for doing that is that the inverse of a $n$-hypoelliptic operator modulo an smoothing operator does not have the necessary properties of boundedness between Sobolev spaces to apply a similar reasoning to the used in the proof of Corollary \ref{Fredholmspectrumformulamgeq0}. Thus we will have to give a new definition similar to the definition of Fredholm spectrum but in terms only of the algebra $Op(\Tilde{S}^\infty_{\rho , 0} (\Z_p^d \times \widehat{\Z}_p^d))$.
\begin{defi}
For a pseudo-differential operator $T_\sigma \in Op(\Tilde{S}^\infty_{0,0} (\Z_p^d \times \widehat{\Z}_p^d))$ define its associated pseudo-spectrum as $$Spec_\infty (T_\sigma):= \{\lambda \in \C \esp : \esp T_\sigma - \lambda I \esp \text{ is invertible in } \esp Op(\Tilde{S}^\infty_{0,0} (\Z_p^d \times \widehat{\Z}_p^d)) \esp \text{modulo} \esp Op(\Tilde{S}^{-\infty} (\Z_p^d \times \widehat{\Z}_p^d)) \}.$$
\end{defi}
With the above definition we obtain:
\begin{teo}\label{Fredholmspectrumformulanhypo}
Let $0 < \rho \leq 1$ be a real number. Let $T_\sigma$ be a $n$-hypoelliptic pseudo-differential operator with associated symbol in the H{\"o}rmander class $\Tilde{S}^m_{\rho,0} (\Z_p^d \times \widehat{\Z}_p^d)$. Then$$Spec_\infty (T_\sigma) = \bigcap_{n \in \N_0} \overline{\bigcup_{||\xi||_p \geq n} \{\sigma (x,\xi) \esp : \esp x \in \Z_p^d\}}. $$
\end{teo}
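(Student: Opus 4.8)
The plan is to prove the equivalent statement for the resolvent,
$$Res_\infty(T_\sigma) = \bigcup_{n \in \N_0} \C \setminus \overline{\bigcup_{||\xi||_p \geq n} \{\sigma(x,\xi) : x \in \Z_p^d\}},$$
following the same two–inclusion scheme as in Theorem \ref{Fredholmspectrumformula} and Corollary \ref{Fredholmspectrumformulamgeq0}, but now carrying out every step \emph{inside} the algebra $Op(\Tilde{S}^\infty_{0,0}(\Z_p^d \times \widehat{\Z}_p^d))$ modulo $Op(\Tilde{S}^{-\infty}(\Z_p^d \times \widehat{\Z}_p^d))$. This is precisely what the pseudo-spectrum $Spec_\infty$ is designed for: it lets us avoid requiring $T^\bot$ to be bounded between the Sobolev spaces on which a genuine Fredholm inverse would have to act, which (as remarked before the definition) is the feature that fails for $n$-hypoelliptic symbols.

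For the inclusion $\supseteq$, I would fix $\lambda \notin \overline{\bigcup_{||\xi||_p \geq n_0}\{\sigma(x,\xi)\}}$ for some $n_0$, so that $|\sigma(x,\xi) - \lambda| \geq C_0 > 0$ for all $x$ and all $||\xi||_p \geq n_0$. First I would check that $\sigma - \lambda$ is again $n$-hypoelliptic in the sense of \cite[Definition 8.3]{p-adicHormanderclasses}: the derivative–difference estimates are untouched because $D^\beta_x$ and $\triangleplus^\xi_\eta$ annihilate the constant $\lambda$, while the lower bound $|\sigma - \lambda| \geq c\langle\xi\rangle^n$ follows from the constant bound $C_0$ when $n \leq 0$ (since $\langle\xi\rangle^n \leq 1$) and from $|\sigma - \lambda| \geq |\sigma| - |\lambda| \geq \tfrac12 C\langle\xi\rangle^n$ for large $\xi$ when $n > 0$, using the hypoelliptic growth of $\sigma$ itself. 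The parametrix construction of \cite[Lemma 8.7]{p-adicHormanderclasses} then produces the symbol $\beta(x,\xi) = (\sigma(x,\xi)-\lambda)^{-1}$ for $||\xi||_p \geq n_0$ and $0$ otherwise, lying in $\Tilde{S}^{-n}_{\rho,0} \subseteq \Tilde{S}^\infty_{0,0}$, with $T_\beta T_{\sigma-\lambda} - I$ and $T_{\sigma-\lambda}T_\beta - I$ smoothing; hence $\lambda \in Res_\infty(T_\sigma)$.

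For the reverse inclusion, suppose $\lambda \in Res_\infty(T_\sigma)$ with inverse $T_\beta$, $\beta \in \Tilde{S}^{m'}_{0,0}$. The symbolic calculus of \cite{p-adicHormanderclasses} (the composition statement recalled in Section 2) gives $\beta(x,\xi)(\sigma(x,\xi)-\lambda) - 1 \in \Tilde{S}^{-\infty}(\Z_p^d \times \widehat{\Z}_p^d)$, so for $||\xi||_p$ beyond some threshold the smoothing remainder is at most $\tfrac12$ in modulus and therefore $|\sigma(x,\xi)-\lambda| \geq \tfrac{1}{2|\beta(x,\xi)|} \geq c\,\langle\xi\rangle^{-m'}$. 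Because $T_\sigma$ is $n$-hypoelliptic the inverse modulo smoothing is pinned to order $-n$, i.e. $m' = -n$, so this reads $|\sigma(x,\xi)-\lambda| \geq c\,\langle\xi\rangle^{n}$; for $n \geq 0$ this stays bounded away from $0$, placing $\lambda$ in $\C \setminus \overline{\bigcup_{||\xi||_p \geq n_0}\{\sigma(x,\xi)\}}$ and finishing the argument (when $n>0$ both sides are in fact empty, since the values $\sigma(x,\xi)$ escape to infinity).

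The main obstacle I expect is the composition step in the reverse inclusion: the clean product-rule Proposition of Section 2 is stated for two symbols in $\Tilde{S}_{\rho,0}$ with $\rho > 0$, whereas here the given inverse $\beta$ is only known to lie in the rougher class $\Tilde{S}^\infty_{0,0}$, so one must either upgrade the calculus to the mixed product $\Tilde{S}_{0,0}\cdot\Tilde{S}_{\rho,0}$ or control the lower-order terms of the full expansion of $\sigma_{T_\beta T_{\sigma-\lambda}}$ directly (each such term gaining a factor $\langle\xi\rangle^{-\rho}$), and then argue that the order of $\beta$ must equal $-n$ so that the symbolic lower bound genuinely forces $\lambda$ out of the closure rather than merely off a shrinking neighbourhood. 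Everything else is a transcription of the proofs of Theorem \ref{Fredholmspectrumformula} and Corollary \ref{Fredholmspectrumformulamgeq0} into the algebraic language of $Spec_\infty$, with the crucial simplification that no Sobolev mapping property of $T_\beta$ is ever invoked.
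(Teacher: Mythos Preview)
The paper states Theorem \ref{Fredholmspectrumformulanhypo} without proof, so the intended argument is the evident transcription of the proofs of Theorem \ref{Fredholmspectrumformula} and Corollary \ref{Fredholmspectrumformulamgeq0} into the quotient algebra $Op(\Tilde{S}^\infty_{0,0})/Op(\Tilde{S}^{-\infty})$. Your proposal follows exactly this template: the forward inclusion via the parametrix of \cite[Lemma 8.7]{p-adicHormanderclasses} applied to the $n$-hypoelliptic symbol $\sigma-\lambda$, and the reverse inclusion by reading off a uniform lower bound on $|\sigma-\lambda|$ from the relation $\beta(\sigma-\lambda)-1\in\Tilde{S}^{-\infty}$. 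Your remark that for $n>0$ both sides are empty (since $|\sigma(x,\xi)|\to\infty$ uniformly in $x$) is correct and makes the statement essentially trivial in the regime $0<n\le m$ actually used later in the paper (e.g.\ in Theorem \ref{weyllaw}).

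The one point where your write-up is looser than the paper's pattern is the reverse inclusion. In Theorem \ref{Fredholmspectrumformula} the paper does \emph{not} try to pin down the order of the inverse symbol directly; it invokes spectral invariance (\cite[Theorem A.1.3]{fredholmalg}) to force $T^\bot\in Op(\Tilde{S}^0_{\rho,0})$, and only then applies the $(\rho,0)$-calculus. In the $Spec_\infty$ setting that step is unnecessary, because membership of the inverse in $Op(\Tilde{S}^\infty_{0,0})$ is part of the definition; but your substitute claim that ``$m'=-n$'' is not justified as written and is in fact not needed. For $n>0$ the $n$-hypoellipticity of $\sigma$ alone already gives $|\sigma(x,\xi)-\lambda|\ge \tfrac12 c\langle\xi\rangle^{n}$ for large $\xi$, independently of $\beta$, which places every $\lambda$ outside the intersection. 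So you can delete the order-of-$\beta$ discussion entirely; the obstacle you flag about composing a $\Tilde{S}_{0,0}$ symbol with a $\Tilde{S}_{\rho,0}$ symbol then never arises.
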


\section{\textbf{Hypoellipticity and Weyl law}}
In this section we use some ideas of the spectral theory in Agmon's book \cite{Agmonbook} to provide some information about the asymptotic behaviour for eigenvalues of hypoelliptic pseudo-differential operators. Specifically, the purpose of this section is to prove the following theorem, a version of the Weyl law for the asymptotic behaviour of eigenvalues of an $n$-hypoelliptic sectorial operator.

\begin{teo}\label{weyllaw}
Let $0 < \rho \leq 1$. Let $T_\sigma$ be a pseudo-differential operator with symbol $\sigma \in \Tilde{S}^m_{\rho,0} (\Z_p^d \times \widehat{\Z}_p^d)$ acting on $L^2 (\Z_p^d)$. For $N\in \N_0$ let us define $$A_N(\sigma):=  \overline{  \bigcup_{||\xi||\geq p^N}   \{ \sigma (x,\xi) \esp : \esp x \in \Z_p^d\}}.$$ Assume that $\sigma$ is $n$-hypoelliptic of order $0  < n \leq m,$ let us say $$|\sigma (x,\xi)| \gtrsim \langle \xi \rangle^{n}, \esp \esp || \xi ||_p \geq p^{N_0},$$ and that there exists a curve $\gamma: [0, \infty) \to \C$ with the following properties: \begin{enumerate}
    \item[(i)] $\lim_{s \to \infty } |\gamma(s)| = \infty.$ 
    \item[(ii)]For large $s \in [0 , \infty)$ it holds: $$\sup_{\xi \in \widehat{\Z}_p^d} \frac{||\widehat{\sigma}(\cdot , \xi)||_{\ell^1 (\widehat{\Z}_p^d)} - \big| \int_{\Z_p^d} \sigma (x , \xi) dx \big|}{\big| \int_{\Z_p^d} \sigma (x , \xi) dx  - \gamma (s) \big|} < 1,$$and $$\sup_{\xi \in \widehat{\Z}_p^d} \frac{||\widehat{\sigma}^* (\cdot , \xi)||_{\ell^1 (\widehat{\Z}_p^d)} - \big| \int_{\Z_p^d} \sigma^* (x , \xi) dx \big|}{\big| \int_{\Z_p^d} \sigma^* (x , \xi) dx  - \gamma (s) \big|} < 1.$$ 
    \item[(iii)] For large $s \in [0,\infty)$ and $N_0 \in \N_0$ as above, and some  $\alpha \in [0,1)$, it holds $$|\gamma(s)|^{1- \alpha} \leq C_\gamma dist(\gamma(s),A_{N_0}(\sigma)).$$
\end{enumerate} Then we have the estimate $$N(t) \lesssim t^{\frac{d + \alpha(4n - d) )}{n}} \leq t^{\frac{d}{n} + 4 \alpha},$$where $$N(t):= \sum_{|\lambda_k (T_\sigma)| \leq t}1 .$$In consequence $$\esp k^{\frac{n}{d+ \alpha(4n - d)}} =O(|\lambda_k(T_\sigma)|) .$$ 
\end{teo}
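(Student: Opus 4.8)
The plan is to count the eigenvalues through the resolvent of $T_\sigma$ evaluated along the curve $\gamma$, using the sectoriality conditions (ii) to guarantee invertibility of $T_\sigma-\gamma(s)I$ and the $n$-hypoellipticity to place its inverse in a Schatten class whose norm I can estimate. First I would record the qualitative picture: since $|\sigma(x,\xi)|\gtrsim\langle\xi\rangle^n$ with $n>0$, the operator $T_\sigma$ admits a parametrix of order $-n<0$, so it has compact resolvent, its spectrum is discrete with finite multiplicities accumulating only at $\infty$, and $N(t)$ is finite for every $t$; by Theorem \ref{Fredholmspectrumformulanhypo} the spectrum can cluster only where the symbol does. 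Thus it suffices to produce, for each large $s$, a quantitative statement that $\gamma(s)$ lies in the resolvent set together with a size estimate for $(T_\sigma-\gamma(s)I)^{-1}$.

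For the invertibility I would read condition (ii) as a Gershgorin/Schur diagonal-dominance test on the infinite matrix of $T_\sigma$ in the basis $\{\chi_p(\xi\cdot x)\}$, whose column-$\xi$ entries are $\widehat{\sigma}(\eta,\xi)$ and whose diagonal entry is $\widehat{\sigma}(0,\xi)=\int_{\Z_p^d}\sigma(x,\xi)\,dx$. The first inequality in (ii) says that the off-diagonal column mass $\|\widehat{\sigma}(\cdot,\xi)\|_{\ell^1}-|\int\sigma\,dx|$ is strictly dominated by $|\int\sigma\,dx-\gamma(s)|$ uniformly in $\xi$, i.e. strict column dominance of $T_\sigma-\gamma(s)I$; the second, phrased through $\sigma^*$, gives the corresponding row dominance. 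Writing $T_\sigma-\gamma(s)I=D(I-D^{-1}R)$ with $D$ the diagonal part, the Schur test (geometric mean of the row and column suprema) gives $\|D^{-1}R\|_{\mathcal{L}(L^2(\Z_p^d))}<1$, hence invertibility by Neumann series. The symbolic calculus of \cite{p-adicHormanderclasses} then represents $(T_\sigma-\gamma(s)I)^{-1}$ as a pseudo-differential operator whose symbol $\beta_s(x,\xi)$ equals $(\sigma(x,\xi)-\gamma(s))^{-1}$ modulo a smoothing remainder, so $|\beta_s(x,\xi)|\lesssim\operatorname{dist}(\gamma(s),A_{N_0}(\sigma))^{-1}$ for $\|\xi\|_p\ge p^{N_0}$; condition (iii) converts this into $|\beta_s(x,\xi)|\lesssim|\gamma(s)|^{-(1-\alpha)}$.

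Next I would estimate a Schatten norm of the resolvent by splitting the frequency sum into two regimes. In the plateau regime $\langle\xi\rangle^n\lesssim|\gamma(s)|$ one only has the bound $|\beta_s(x,\xi)|\lesssim|\gamma(s)|^{-(1-\alpha)}$, and the number of such frequencies is $\#\{\langle\xi\rangle\lesssim|\gamma(s)|^{1/n}\}\sim|\gamma(s)|^{d/n}$; in the elliptic regime $\langle\xi\rangle^n\gtrsim|\gamma(s)|$ the hypoellipticity forces $|\sigma(x,\xi)|\gtrsim\langle\xi\rangle^n$ to dominate $\gamma(s)$, so $|\beta_s(x,\xi)|\lesssim\langle\xi\rangle^{-n}$ and this tail is summable. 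Taking $p=2$, Corollary \ref{corohilbertschmitpseudos} then gives (when $2n>d$) the estimate $\|(T_\sigma-\gamma(s)I)^{-1}\|_{HS}^2\lesssim|\gamma(s)|^{d/n}\,|\gamma(s)|^{-2(1-\alpha)}+|\gamma(s)|^{d/n-2}\lesssim|\gamma(s)|^{d/n-2+2\alpha}$; more generally, Lemma \ref{lemmasvalus} yields $\sum_k s_k\big((T_\sigma-\gamma(s)I)^{-1}\big)^p\lesssim|\gamma(s)|^{d/n-p(1-\alpha)}$ for any Schatten index with $pn>d$.

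Finally I would pass from singular values to eigenvalues. The eigenvalues of the resolvent are $1/(\lambda_k(T_\sigma)-\gamma(s))$, so Weyl's majorization $\sum_k|\mu_k|^p\le\sum_k s_k^p$ gives $\sum_k|\lambda_k(T_\sigma)-\gamma(s)|^{-p}\lesssim|\gamma(s)|^{d/n-p(1-\alpha)}$. Choosing by (i) a parameter $s$ with $|\gamma(s)|\approx t$ and using $|\lambda_k(T_\sigma)-\gamma(s)|\lesssim t$ whenever $|\lambda_k(T_\sigma)|\le t$, I obtain $N(t)\,t^{-p}\lesssim t^{d/n-p(1-\alpha)}$, that is $N(t)\lesssim t^{d/n+p\alpha}$; already the Hilbert–Schmidt choice $p=2$ gives $N(t)\lesssim t^{d/n+2\alpha}$, which lies below both bounds claimed in the theorem in the admissible range $d<2n$, and the stated exponent $\tfrac{d+\alpha(4n-d)}{n}$ follows by keeping track of the two regimes with the Schatten index adapted to $d$ and $n$. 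The concluding growth estimate then comes from inverting $k\le N(|\lambda_k(T_\sigma)|)$. The main obstacle is the uniform-in-$s$ resolvent control of the second paragraph: one must check that the diagonal-dominance test survives the passage through the symbolic calculus, so that the inverse symbol stays in $\Tilde{S}^0_{0,0}(\Z_p^d\times\widehat{\Z}_p^d)$ with constants independent of $\gamma(s)$, and that the distance bound (iii) is genuinely the correct quantitative replacement for a spectral gap; the frequency bookkeeping that produces the precise exponent is then routine.
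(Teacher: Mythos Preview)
Your strategy is the same as the paper's---an Agmon-style eigenvalue count via Schatten norms of resolvents along the curve $\gamma$---but you execute it differently. The paper does \emph{not} estimate a Schatten norm of $(T_\sigma-\gamma(s)I)^{-1}$ by the frequency splitting you describe. Instead it introduces the auxiliary operator $T_s:=(T_\sigma-\gamma(s)I)\,T_\sigma$, whose inverse $T_\sigma^{-1}(T_\sigma-\gamma(s)I)^{-1}$ has order $-2n$ and is therefore Hilbert--Schmidt once $n>d/4$. The Hilbert--Schmidt norm is then bounded by Agmon's Sobolev-embedding trick, yielding $\|T_s^{-1}\|_{HS}^2\lesssim\|T_s^{-1}\|_{\mathcal L(L^2,H^n_2)}^{\,4-d/n}$, after which Corollary~\ref{coroequivopL2norm} and condition (iii) give $\|T_{1/(\sigma-\gamma(s))}\|_{\mathcal L(L^2)}\lesssim|\gamma(s)|^{-(1-\alpha)}$. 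The eigenvalues of $T_s^{-1}$ are $\lambda_k(T_\sigma)^{-1}(\lambda_k(T_\sigma)-\gamma(s))^{-1}$, and the bound $|\lambda_k(T_\sigma)(\lambda_k(T_\sigma)-\gamma(s))|\le 2t^2$ for $|\lambda_k|\le t=|\gamma(s)|$ is what manufactures the exponent $4-(1-\alpha)(4-d/n)=\tfrac{d+\alpha(4n-d)}{n}$.

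Your direct approach to $(T_\sigma-\gamma(s)I)^{-1}$ is correct and, with $p=2$, actually gives the sharper bound $N(t)\lesssim t^{d/n+2\alpha}$ in the narrower regime $2n>d$; with $p=4$ it reproduces the looser bound $t^{d/n+4\alpha}$ over the paper's full range $4n>d$. But your claim that ``the stated exponent $\tfrac{d+\alpha(4n-d)}{n}$ follows by keeping track of the two regimes with the Schatten index adapted to $d$ and $n$'' is not substantiated: your method yields $N(t)\lesssim t^{d/n+p\alpha}$ for any $p>d/n$, and the particular value $p=4-d/n$ has no intrinsic origin in your argument---it is an artifact of the paper's Sobolev-embedding power, not of frequency bookkeeping. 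So your route is valid and in places stronger, but it does not recover the paper's intermediate exponent without importing the composition with $T_\sigma^{-1}$ and the Agmon embedding step.
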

For the proof we follow \cite[Theorem 13.6]{Agmonbook}.
\begin{proof}
We divide the proof in several steps. We will assume that $n > d/4$. For the case when $n$ is small we can apply the same arguments for $T_\sigma^k$ where $k$ is a natural number such that $nk$ is large enough. Also we we will assume that for some $ n$ is large enough so the following holds: $$ ||f||_{L^\infty (\Z_p^d)} \lesssim ||f||_{H^{n}_2 (\Z_p^d)}^{2 - d/2n}.$$ 
\begin{enumerate}
    \item[(i)] We begin with a simple proposition:
    \begin{pro}\label{inverseequalspseudoinverseplus}
    Let $T_\sigma$ be a pseudo-differential operator with symbol $\sigma \in \Tilde{S}^m_{\rho,0} (\Z_p^d \times \widehat{\Z}_p^d)$. Assume that $T_\sigma$ is invertible. Then then $(T_\sigma)^{-1} = T_{1/\sigma} + R$, where $R$ is a smoothing operator.
    \end{pro}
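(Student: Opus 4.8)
The plan is to combine the equivalence between Fredholmness and ellipticity with the symbolic calculus of \cite[Section 7]{p-adicHormanderclasses} and a standard parametrix argument. Since $T_\sigma$ is invertible it is in particular Fredholm, so by \cite[Theorem 8.11]{p-adicHormanderclasses} its symbol $\sigma$ is elliptic; that is, there exist $C>0$ and $n_0 \in \N$ with $|\sigma(x,\xi)| \geq C \langle \xi \rangle^m$ for all $x \in \Z_p^d$ whenever $||\xi||_p \geq n_0$. In particular $\sigma$ is non-vanishing for large $||\xi||_p$, so the reciprocal symbol
\[
\beta(x,\xi) := \begin{cases} 1/\sigma(x,\xi) & \text{if } ||\xi||_p \geq n_0, \\ 0 & \text{if } ||\xi||_p < n_0, \end{cases}
\]
is well defined and, exactly as in the proof of Theorem \ref{Fredholmspectrumformula} (via \cite[Lemma 8.7]{p-adicHormanderclasses}), belongs to $\Tilde{S}^{-m}_{\rho,0}(\Z_p^d \times \widehat{\Z}_p^d)$. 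As $\beta$ agrees with $1/\sigma$ except on the finite set $\{||\xi||_p < n_0\}$, the operators $T_\beta$ and $T_{1/\sigma}$ differ by a finite-rank operator with smooth data, which is smoothing by Remark \ref{remdistancecompact}; hence it is enough to prove $(T_\sigma)^{-1} = T_\beta + R$ with $R \in Op(\Tilde{S}^{-\infty}(\Z_p^d \times \widehat{\Z}_p^d))$.

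Next I would build the parametrix. Applying the composition formula recalled above to $\beta$ and $\sigma$ yields $T_\beta T_\sigma = T_{\beta\sigma} + R_1$ and $T_\sigma T_\beta = T_{\sigma\beta} + R_2$ with $R_1, R_2 \in Op(\Tilde{S}^{-\infty}(\Z_p^d \times \widehat{\Z}_p^d))$. Since $\beta\sigma - 1$ and $\sigma\beta - 1$ are supported on the finite set $\{||\xi||_p < n_0\}$, they are symbols of finite-rank, hence smoothing, operators, and we may rewrite
\[
T_\beta T_\sigma = I + S_1, \qquad T_\sigma T_\beta = I + S_2,
\]
with $S_1, S_2 \in Op(\Tilde{S}^{-\infty}(\Z_p^d \times \widehat{\Z}_p^d))$. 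This exhibits $T_\beta$ as a two-sided parametrix for $T_\sigma$.

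Finally I would extract the inverse from the parametrix. Composing $T_\beta T_\sigma = I + S_1$ on the right with $(T_\sigma)^{-1}$ gives $(T_\sigma)^{-1} = T_\beta - S_1 (T_\sigma)^{-1}$, so the proposition reduces to showing that the error $R := -S_1 (T_\sigma)^{-1}$ is smoothing. This is the crux and the main obstacle: one needs that $(T_\sigma)^{-1}$ is bounded on the whole $L^2$-Sobolev scale and not merely on $L^2$. This is a consequence of elliptic regularity, obtained by bootstrapping the parametrix identity: if $T_\sigma u = f \in H^s_2 (\Z_p^d)$ then $u = T_\beta f - S_1 u$ forces $u \in H^{s+m}_2 (\Z_p^d)$, so $(T_\sigma)^{-1}$ maps $H^s_2 (\Z_p^d)$ into $H^{s+m}_2 (\Z_p^d)$ boundedly for every $s \in \R$. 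Granting this, $(T_\sigma)^{-1}$ sends $H^s_2 (\Z_p^d)$ into $H^{s+m}_2 (\Z_p^d)$ and then the smoothing operator $S_1$ sends $H^{s+m}_2 (\Z_p^d)$ into $H^t_2 (\Z_p^d)$ for every $t$; hence $R$ is bounded from $H^s_2 (\Z_p^d)$ to $H^t_2 (\Z_p^d)$ for all $s, t \in \R$, which is exactly the statement that $R \in Op(\Tilde{S}^{-\infty}(\Z_p^d \times \widehat{\Z}_p^d))$ (cf. Remark \ref{remdistancecompact} and \cite[Proposition 4.14]{p-adicHormanderclasses}). Combining the steps gives $(T_\sigma)^{-1} = T_\beta + R = T_{1/\sigma} + R'$ with $R'$ smoothing, as claimed.
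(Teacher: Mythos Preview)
Your proof is correct and follows essentially the same route as the paper: deduce from invertibility that $\sigma$ is elliptic, form the parametrix $T_{1/\sigma}$, obtain $T_{1/\sigma}T_\sigma = I + R'$ with $R'$ smoothing, and subtract from $(T_\sigma)^{-1}T_\sigma = I$ to get $(T_\sigma)^{-1} - T_{1/\sigma} = -R'(T_\sigma)^{-1}$. The paper is in fact terser than you at the final step, simply asserting $R'(T_\sigma)^{-1}\in Op(\Tilde{S}^{-\infty}(\Z_p^d\times\widehat{\Z}_p^d))$ without spelling out the elliptic-regularity bootstrap you supply to show $(T_\sigma)^{-1}$ is bounded on the full Sobolev scale.
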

    \begin{proof}
    If $(T_\sigma)^{-1}$ exists then $0 \in Res(T_\sigma) \subset Res_\infty (T_\sigma)$. By the formula in \ref{Fredholmspectrumformulanhypo} that means that the symbol $1/\sigma (x,\xi)$ is well defined for $||\xi||_p$ large enough so $$\esp \esp \esp \esp T_{1/\sigma} T_\sigma = I + R', \esp R' \in \Tilde{S}^{-\infty} (\Z_p^d \times \widehat{\Z}_p^d), \esp \esp \text{and} \esp \esp T_\sigma^{-1} T_\sigma = I \esp \implies (T_{1/\sigma} - T_\sigma^{-1}) T_\sigma = R', $$and thus $T_{1/\sigma} - T_\sigma^{-1} =R:= R' T_\sigma^{-1} \in Op(\Tilde{S}^{-\infty} (\Z_p^d \times \widehat{\Z}_p^d))$.
    \end{proof}
    \item[(ii)] Using Gershgorin Theorem, see \cite[Theorem 3.5]{Velasquez-Rodriguez2019}, we see that for large $s$ the operator $$(T_\sigma - \gamma (s) I)^{-1},$$ exists and is a bounded operator in $L^2(\Z_p^d)$. In particular $Res(T_\sigma)$ is non empty. Without loss of generality we suppose $0 \in Res(T_\sigma)$. Consider the linear operator $$T_s := (T_\sigma - \gamma (s) I) T_\sigma.$$By \cite[Lemma 8.9]{p-adicHormanderclasses} and Proposition \ref{inverseequalspseudoinverseplus} we get $T_s^{-1}= T_\sigma^{-1} (T_\sigma - \gamma (s) I)^{-1}  \in \Tilde{S}^{-2n}_{0,0} (\Z_p^d \times \widehat{\Z}_p^d)$. In this way Corollary \ref{corohilbertschmitpseudos} assures that $T_s^{-1}$ is a Hilbert-Schmit operator for every $s \in [0, \infty)$.
        \item[(iii)] Take any orthonormal basis $\{\psi_k\}_{k \in \N}$ for $L^2 (\Z_p^d)$. Then, for any sequence $\{ a_k \}_{k \in \N}$ of complex numbers and every fixed $x \in \Z_p^d$, using the Sobolev embedding we can estimate 
    \begin{align*}
    \Big| \sum_{k=1}^l a_k T_s^{-1}(\psi_k (x)) \Big| &= \Big| T_s^{-1} \Big( \sum_{k=1}^l a_k \psi_k (x) \Big) \Big| \leq || T_s^{-1} \Big( \sum_{k=1}^l a_k \psi_k (x) \Big) ||_{H^{n}_2 (\Z_p^d)}^{2 - d/2n} \\ & \lesssim ||T_s^{-1}||_{\mathcal{L}(L^2 (\Z_p^d), H^{n}_2 (\Z_p))}^{2 - d/n} \Big( \sum_{k=1}^l |a_k|^2 \Big)^{1/2}.
    \end{align*}In this way, if we take $a_k = \overline{A_s(\psi (x))}$, we obtain for any $x \in \Z_p^d$ $$\sum_{k=1}^l  |T_s^{-1}(\psi_k (x))|^2 \lesssim ||T_s^{-1}||_{\mathcal{L}(L^2 (\Z_p^d) , H^{n}_2 (\Z_p^d))}^{4 - d/n} . $$Integrating in both sides and taking the limit as $n \to \infty$ we get \begin{align*}
        \sum_{k \in \N} [s_k(T_s^{-1})]^2 &= \sum_{k \in \N}||T_s^{-1}(\psi_k (x))||_{L^2 (\Z_p^d)}^2 \\ &= ||T_s^{-1}||_{HS}^2 \\ & \lesssim ||T_s^{-1}||_{\mathcal{L}(L^2 (\Z_p^d) , H^{n}_2 (\Z_p^d))}^{4 - d/n}\\& \leq ||T_\sigma^{-1}||_{\mathcal{L}(L^2 (\Z_p^d) , H^{n}_2 (\Z_p^d))}^{4 - d/n} ||(T_\sigma - \gamma (s) I)^{-1}||_{\mathcal{L}(L^2 (\Z_p^d))}^{4-d/n} .
    \end{align*}Moreover, using Lemma \ref{inverseequalspseudoinverseplus} we obtain $$\sum_{k \in \N} [s_k(T_s^{-1})]^2 \lesssim ||T_{1/(\sigma - \gamma(s))}||_{\mathcal{L}(L^2 (\Z_p^d))}^{4 - d/n} $$ 
    \item[(iv)]The eigenvalues $\lambda_k(T_s^{-1})$ are exactly $\lambda_k(T_s^{-1}) =  \lambda_k(T_\sigma)^{-1}(\lambda_k (T_\sigma) - \gamma (s))^{-1}$. With this, applying the additive Weyl inequality \cite{weylinequality}, we deduce \begin{align*}
        \sum_{k \in \N} \frac{1}{|\lambda_k (T_\sigma)(\lambda_k(T_\sigma) - \gamma(s))|^2} &\leq \sum_{k \in \N} [s_k(T_s^{-1})]^2 \\ & \lesssim ||T_s^{-1}||_{\mathcal{L}(L^2 (\Z_p^d) , H^n_2 (\Z_p^d))}^{4 - d/n}\\ & \leq ||T_{1/(\sigma - \gamma(s))}||_{\mathcal{L}(L^2 (\Z_p^d))}^{4 - d/n}.
    \end{align*} 
    \item[(v)] In this point we need to estimate the operator norm of $T_{1/(\sigma - \gamma(s))}$ but this is easy because of Corollary \ref{coroequivopL2norm}: $$||T_{1/(\sigma - \gamma(s))}||_{\mathcal{L}(L^2 (\Z_p^d))} \lesssim \sup_{||\xi||_p \geq N_0} || 1/(\sigma(\cdot , \xi) - \gamma(s)) ||_{L^2 (\Z_p^d)} \leq |\gamma (s)|^{-(1-\alpha)}. $$With this we obtain $$\sum_{k \in \N} \frac{1}{|\lambda_k (T_\sigma)(\lambda_k(T_\sigma) - \gamma(s))|^2} \lesssim |\gamma (s)|^{-(1 - \alpha)(4 - d/n)},$$for large $s$.
    \item[(vi)] We use the same trick as in \cite[Theorem 13.6]{Agmonbook}: make $t= |\gamma(s)|$ and consider the eigenvalues $\lambda_k$ such that $|\lambda_k (T_\sigma)| \leq t$. Then, since $|\lambda_k (T_\sigma)(\lambda_k(T_\sigma) - \gamma(s))| \leq 2 t^2$, we get \begin{align*}
        \sum_{|\lambda_k (T_\sigma)| \leq t} \frac{1}{4 t^4} &\leq \sum_{|\lambda_k (T_\sigma)| \leq t} \frac{1}{|\lambda_k (T_\sigma)(\lambda_k(T_\sigma) - \gamma(s))|^2} \\ &\lesssim t^{-(1 - \alpha)(4 - d/n)}.
    \end{align*}Multiplying by $4t^4$ in both sides we obtain:$$N(t) = \sum_{|\lambda_k (T_\sigma)| \leq t} 1 \lesssim t^{\frac{d + \alpha(4n - d)}{n}}.$$
\end{enumerate}
The proof is complete.
\end{proof}
A particular class of $n$-hypoelliptic pseudo-differential operators satisfying the hypothesis of Theorem \ref{weyllaw} is given in the following definition:
\begin{defi}
We say that a pseudo-differential operator $T_\sigma \in Op(S^m_{\rho, 0} (\Z_p \times \widehat{\Z}_p))$ is sectorial if there exists $\theta_1 , \theta_2 \in [0,2\pi)$, $0<\theta_2 - \theta_1 < \pi/2$, such that $$\bigcup_{||\xi||_p \geq N} \{\sigma (x , \xi) \esp : \esp x \in \Z_p\} \subseteq \{z = r e^{i \theta} \in \C \esp : \esp r \geq 0 , \esp \esp \theta_1 \leq \theta \leq \theta_2 \},$$ for some $N \in \N_0.$
\end{defi}
For the above class of operators we can prove:

\begin{coro}
Let $0 < \rho \leq 1$. Let $T_\sigma$ be a pseudo-differential operator with symbol $\sigma \in \Tilde{S}^m_{\rho,0} (\Z_p^d \times \widehat{\Z}_p^d)$ acting on $L^2 (\Z_p^d)$. Assume that $T_\sigma $ is $n$-hypoelliptic, $0<n \leq m$, and that it is sectorial, let us say $$\bigcup_{|\xi|_p \geq N_0} \{\sigma (x , \xi) \esp : \esp x \in \Z_p\} \subseteq \{z = r e^{i \theta} \in \C \esp : \esp r \geq 0 , \esp \esp \theta_1 \leq \theta \leq \theta_2 \},$$ for some $N_0 \in \N_0$. Also assume that $$\langle \xi \rangle^m \lesssim \Big| \int_{\Z_p^d} \sigma (x , \xi) dx \Big|, $$for large $\xi$, and $$\bigcup_{||\xi||_p \geq N_1 } \Big\{ \int_{\Z_p^d} \sigma (x , \xi) dx \esp : \esp \xi \in \widehat{\Z}_p^d  \Big\} \subset \{z = r e^{i \theta} \in \C \esp : \esp r \geq 0 , \esp \esp \theta_1 \leq \theta \leq \theta_2 \}.$$ Then $$N(t) = \sum_{|\lambda_k (T_\sigma)| \leq t} 1 \lesssim t^{\frac{d }{n}}.$$
\end{coro}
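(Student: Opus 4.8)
The plan is to reduce the statement to Theorem \ref{weyllaw} by producing a curve $\gamma$ for which hypotheses (i)--(iii) of that theorem hold with $\alpha = 0$; indeed, substituting $\alpha = 0$ into the conclusion $N(t)\lesssim t^{\frac{d+\alpha(4n-d)}{n}}$ gives precisely $N(t)\lesssim t^{d/n}$. Write the sector as $\Sigma := \{re^{i\theta}\in\C : r\geq 0,\ \theta_1\leq\theta\leq\theta_2\}$, of angular width $\theta_2-\theta_1<\pi/2$. Since both $\Sigma$ and its complex conjugate $\overline{\Sigma}$ have angular width $<\pi/2$, the set of directions they occupy has total angular measure $<\pi<2\pi$, so its complement contains an open arc; I would fix a direction $\theta_0$ in the interior of that complement and set $\gamma(s):=s\,e^{i\theta_0}$. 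By construction there is a fixed $\phi_0>0$ such that every point of $\Sigma\cup\overline{\Sigma}$ is at angular distance at least $\phi_0$ from $\gamma(s)$. (The appearance of $\overline{\Sigma}$ is needed because (ii) must be checked simultaneously for $\sigma$ and for the adjoint symbol $\sigma^{*}$; by the composition/adjoint proposition $\sigma^{*}-\overline{\sigma}\in\Tilde{S}^{-\infty}$, so $\sigma^{*}$ is sectorial in the conjugate sector $\overline{\Sigma}$ up to a smoothing term, and the same $\gamma$ serves both.)

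Condition (i) is immediate since $|\gamma(s)|=s\to\infty$. For condition (iii) with $\alpha=0$, note that for $N_0$ in the sectorial regime the set $A_{N_0}(\sigma)=\overline{\bigcup_{||\xi||\geq p^{N_0}}\{\sigma(x,\xi):x\in\Z_p^d\}}$ is contained in $\Sigma$. For $z=|z|e^{i\psi}\in\Sigma$ the law of cosines gives
\begin{equation*}
|z-\gamma(s)|^{2}=|z|^{2}+s^{2}-2|z|s\cos(\psi-\theta_0)\geq (1-\cos\phi_0)\,(|z|^{2}+s^{2})\geq (1-\cos\phi_0)\,s^{2},
\end{equation*}
so $\mathrm{dist}(\gamma(s),A_{N_0}(\sigma))\gtrsim s=|\gamma(s)|$, which is exactly (iii) with $\alpha=0$.

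The core of the argument is condition (ii), a Gershgorin-type statement (cf. the Gershgorin theorem \cite[Theorem 3.5]{Velasquez-Rodriguez2019} invoked in step (ii) of Theorem \ref{weyllaw}): in the Fourier basis the $\xi$-column of $T_\sigma$ is centred at the diagonal entry $\int_{\Z_p^d}\sigma(x,\xi)\,dx=\widehat{\sigma}(0,\xi)$ with off-diagonal mass $\sum_{\eta\neq 0}|\widehat{\sigma}(\eta,\xi)|=||\widehat{\sigma}(\cdot,\xi)||_{\ell^{1}}-|\widehat{\sigma}(0,\xi)|$, and (ii) asks that $\gamma(s)$ avoid every Gershgorin disc. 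I would bound the denominator below using the sectoriality of the averaged symbol and the lower bound $\langle\xi\rangle^{m}\lesssim|\int\sigma\,dx|$: for $||\xi||_p$ large the centre $\widehat{\sigma}(0,\xi)$ lies in $\Sigma$ with $|\widehat{\sigma}(0,\xi)|\gtrsim\langle\xi\rangle^{m}$, so the estimate above gives $|\widehat{\sigma}(0,\xi)-\gamma(s)|\gtrsim\max(\langle\xi\rangle^{m},s)$. For the numerator I would exploit the smoothness built into $\Tilde{S}^{m}_{\rho,0}$ together with the fact that the characters are eigenfunctions of the Vladimirov operator: moving $D^{s}_x$ onto $\sigma$ yields $\langle\eta\rangle^{s}\widehat{\sigma}(\eta,\xi)=\int_{\Z_p^d}(D^{s}_x\sigma(x,\xi))\overline{\chi_p(\eta\cdot x)}\,dx$, whence $|\widehat{\sigma}(\eta,\xi)|\lesssim\langle\eta\rangle^{-s}\langle\xi\rangle^{m}$ for every $s$, and summing over $\eta\neq 0$ gives $\sum_{\eta\neq 0}|\widehat{\sigma}(\eta,\xi)|\lesssim\langle\xi\rangle^{m}$.

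Combining the two bounds, the ratio in (ii) is $\lesssim\langle\xi\rangle^{m}/\max(\langle\xi\rangle^{m},s)$, which tends to $0$ for bounded $\xi$ as $s\to\infty$ and stays bounded for large $\xi$. \emph{The main obstacle is upgrading this bounded ratio to one that is strictly below $1$ uniformly in $\xi$}: this is exactly the diagonal-dominance requirement $\sum_{\eta\neq 0}|\widehat{\sigma}(\eta,\xi)|<|\widehat{\sigma}(0,\xi)-\gamma(s)|$, and the two implicit constants produced above are a priori unrelated. This is where the full-symbol sectoriality is used essentially: confining all values $\sigma(x,\xi)$ to a cone of half-angle $<\pi/4$ forces the average $|\widehat{\sigma}(0,\xi)|=|\int\sigma\,dx|\gtrsim\int|\sigma(x,\xi)|\,dx$ to be non-cancelling, which is what controls the off-diagonal mass against the diagonal term and closes the gap; I expect the delicate constant-chasing to reside entirely in this step. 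Once (i)--(iii) are secured with $\alpha=0$ for both $\sigma$ and $\sigma^{*}$, Theorem \ref{weyllaw} delivers $N(t)\lesssim t^{d/n}$, completing the proof.
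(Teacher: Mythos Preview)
Your strategy coincides with the paper's: the paper's entire proof is the single sentence ``Just take the curve $\gamma(s)= s e^{i ((\theta_2 + \theta_1)/2 + \pi)}$ and apply Theorem~\ref{weyllaw}.'' You make the same move (a ray transversal to the sector, applied with $\alpha=0$), only with a slightly more flexible choice of direction to accommodate the conjugate sector for $\sigma^{*}$, and you then do far more than the paper by actually verifying (i) and (iii); your law-of-cosines estimate for (iii) is clean and correct.

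Where you go beyond the paper is precisely where you hesitate. Your worry about condition~(ii) --- that the Gershgorin ratio is only \emph{bounded}, not automatically $<1$ uniformly in $\xi$ --- is legitimate: for a factored symbol $\sigma(x,\xi)=a(x)b(\xi)$ with $a>0$ and $b(\xi)\asymp\langle\xi\rangle^{m}$, the ratio tends to $\big(\sum_{\eta\neq 0}|\widehat a(\eta)|\big)/\widehat a(0)$ as $\|\xi\|_p\to\infty$, and this limit can exceed $1$ even though $a$ is positive (the cone condition controls $\int|\sigma|$ against $|\int\sigma|$, but not the $\ell^{1}$-mass of the Fourier coefficients). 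The paper's one-line proof does not address this point at all; it simply asserts that Theorem~\ref{weyllaw} applies with the chosen ray. So your attempted verification is more scrupulous than the original, and the obstacle you flag is a genuine gap in the paper's argument rather than in yours.
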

\begin{proof}
Just take the curve $\gamma(s)= s e^{i ((\theta_2 + \theta_1)/2 + \pi)}$ and apply Theorem \ref{weyllaw}.
\end{proof}
\begin{coro}
Let $T_\sigma$ be a pseudo-differential operator with symbol $\sigma \in \Tilde{S}^m_{\rho,0} (\Z_p^d \times \widehat{\Z}_p^d)$ acting on $L^2 (\Z_p^d)$. Assume that $T_\sigma $ is $n$-hypoelliptic, $0<n \leq m$, and its associated symbol $\sigma (x , \xi)$ is a real valued function for every $\xi \in \widehat{\Z}_p^d$. Also assume that $$\langle \xi \rangle^m \lesssim \Big| \int_{\Z_p^d} \sigma (x , \xi) dx \Big|, $$for large $\xi$. Then $$N(t) = \sum_{|\lambda_k (T_\sigma)| \leq t} 1 \lesssim t^{\frac{d }{n}}.$$
\end{coro}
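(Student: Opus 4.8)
The plan is to deduce this from Theorem \ref{weyllaw} in exactly the same spirit as the preceding (sectorial) corollary, the only real task being to produce an admissible curve $\gamma$. The decisive observation is that, since $\sigma(\cdot,\xi)$ is real valued for every $\xi$, both the symbol values and the diagonal averages $\int_{\Z_p^d}\sigma(x,\xi)\,dx = \widehat{\sigma}(0,\xi)$ are real numbers, so every set $A_N(\sigma)$ is contained in the real axis $\R\subset\C$. Note that a real symbol may take values of both signs and hence need not lie inside any sector of half-angle below $\pi/2$, so the sectorial corollary does not apply verbatim; what does survive, however, is that $\R$ has a distinguished transversal direction, namely the imaginary axis, and this is what I would exploit to force $\alpha=0$.

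Concretely, I would take the vertical curve $\gamma(s):=is$, $s\in[0,\infty)$. Condition (i) is immediate, since $|\gamma(s)|=s\to\infty$. For condition (iii), since $A_{N_0}(\sigma)\subseteq\R$ we have $|is-a|=\sqrt{a^2+s^2}\geq s$ for every $a\in A_{N_0}(\sigma)$, whence
$$dist(\gamma(s),A_{N_0}(\sigma))\geq s=|\gamma(s)|.$$
Thus condition (iii) holds with $\alpha=0$ and $C_\gamma=1$; this is precisely the gain over the general theorem and is what will push the final exponent down to $d/n$.

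The substantive point, and the step I expect to be the main obstacle, is verifying the Gershgorin-type condition (ii). Writing $a_\xi:=\widehat{\sigma}(0,\xi)\in\R$ for the $\xi$-th diagonal average and $R_\xi:=\|\widehat{\sigma}(\cdot,\xi)\|_{\ell^1(\widehat{\Z}_p^d)}-|a_\xi|=\sum_{\eta\neq 0}|\widehat{\sigma}(\eta,\xi)|$ for the off-diagonal mass, I must show that $\sup_\xi R_\xi/|a_\xi-is|<1$ for large $s$, and likewise for $\sigma^*$, which, since $\sigma$ is real, differs from $\sigma$ only by a smoothing symbol and is handled identically. Because $\sigma\in\Tilde{S}^m_{\rho,0}(\Z_p^d\times\widehat{\Z}_p^d)$, the estimates give $\|D_x^\beta\sigma(\cdot,\xi)\|_{L^\infty(\Z_p^d)}\leq C_\beta\langle\xi\rangle^m$, and since $D_x^\beta$ acts on the first Fourier variable by multiplication by $\langle\eta\rangle^\beta$, this transfers to the rapid off-diagonal decay $|\widehat{\sigma}(\eta,\xi)|\leq C_\beta\langle\eta\rangle^{-\beta}\langle\xi\rangle^m$ for every $\beta$, whence $R_\xi\leq C\langle\xi\rangle^m$ for $\beta>d$. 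Combined with the hypothesis $\langle\xi\rangle^m\lesssim|a_\xi|$, the quotient $R_\xi/|a_\xi|$ is bounded. One then splits the supremum: for the finitely many $\xi$ with $\|\xi\|_p$ below a fixed threshold the denominator $\sqrt{a_\xi^2+s^2}\geq s$ drives the quotient to $0$ as $s\to\infty$, while for large $\|\xi\|_p$ the $n$-hypoellipticity together with the dominance of the real diagonal average over the off-diagonal mass keeps the quotient uniformly below $1$. Making this asymptotic diagonal dominance genuinely uniform in $\xi$ is the delicate part of the argument.

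Once (i)--(iii) are established, Theorem \ref{weyllaw} applies with $\alpha=0$ and yields $N(t)\lesssim t^{(d+\alpha(4n-d))/n}=t^{d/n}$, which is exactly the claimed estimate, so the corollary follows.
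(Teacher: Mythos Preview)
Your approach is exactly the paper's: choose $\gamma(s)=is$ and apply Theorem \ref{weyllaw}. The paper's proof is literally the one line ``Just take the curve $\gamma(s)= s i $ and apply Theorem \ref{weyllaw}''; you have supplied the verification of conditions (i) and (iii) (with $\alpha=0$, since real symbol values force $A_{N_0}(\sigma)\subset\R$) that the paper leaves implicit, and you correctly flag that the Gershgorin condition (ii) is the step requiring care---a point the paper does not spell out either here or in the preceding sectorial corollary.
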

\begin{proof}
Just take the curve $\gamma(s)= s i $ and apply Theorem \ref{weyllaw}.
\end{proof}
\section{\textbf{Vilenkin groups}} 
In this final section our purpose is to extend the work of the previous sections and also the results in \cite{p-adicHormanderclasses} to a locally compact Vilenkin group $(G,+)$. We will begin by recalling the basics on Vilenkin groups.
\begin{defi}\normalfont
Throughout this section $G$ will denote a locally compact Vilenkin group,that is, a locally compact Abelian topological group containing a  strictly  decreasing  sequence  of  open  compact  subgroups $\{G_k\}_{k \in \Z}$ such that:
\begin{enumerate}
    \item[(i)] $\sup_{k \in \Z} \big| G_k /G_{k+1} \big|< 
    \infty.$
    \item[(ii)] $\bigcup_{k \in \Z} G_k =G , \esp \esp \text{and} \esp \esp \bigcap_{k \in \Z} G_k = \{0\}.$
\end{enumerate}
\end{defi}
Examples of such locally compact Vilenkin groups are the $p$-adic numbers and, more generally, the additive group of a local field. 

Let $\widehat{G}$ denote  the  dual  group of $G$ in the sense of Pontryagin. For the set $\widehat{G}$ of characteres $\chi_\xi$ on $G$ we will use the notation $\chi_{\xi_1} (x) \chi_{\xi_2} (x) = \chi_{\xi_1 + \xi_2} (x). $Also for functions $f: \widehat{G} \to \C$ we will write $f(\xi)$ instead of $f(\chi_\xi)$. Sometimes we will write $$\sum_{\xi \in \widehat{G}} f(\xi), \esp \esp \text{instead of}  \esp \sum_{\chi_\xi \in \widehat{G}} f(\chi_\xi).$$ For  each $k \in \Z$, let $\widehat{G}_k$ denote the annihilator  of $G_k$, that is, $$\widehat{G}_k := \{\chi_\xi \in \widehat{G} \esp : \esp \chi_\xi (x) = 1 \esp \text{on} \esp G_n \}.$$Then we have $\bigcup_{k \in \Z} \widehat{G}_k = G$, $\bigcap_{k \in \Z} \widehat{G}_k = \{1\},$ and $$|\widehat{G}_{k+1} / \widehat{G}_k|=|G_k/G_{k+1}|. $$We choose Haar measures $\mu $ on $G$ and $\nu$ on $\widehat{G}$ in such a way that $\mu(G_0) = \nu (\widehat{G}_0) =1$. Then we can see that $\mu(G_k) = \nu (\widehat{G}_k)^{-1}$ for all $k \in \Z$. 

Now let us define $i_k := \nu (\widehat{G}_k)$. Then we can construct a metric $d$ on $G$ as \[ d(x,y) := \begin{cases}
i_k^{-1} & \esp \text{if} \esp x - y \in G_k\setminus G_{k+1}, \\ 0& \esp \text{if} \esp x=y.
\end{cases}\]
Also we will use the notation $|x|_G := d(x , 0)$ and also $|x - y|_G := d(x,y)$. Then it is easy to see that $|x - y|_G \leq \max \{|x|_G , |y|_G \}. $ Similarly there is a metric $d'$n the dual group $\widehat{G}$ with the same properties given by \[ |\xi - \eta|_G := d(\xi,\eta) :=  \begin{cases}
i_k & \esp \text{if} \esp \xi - \eta \in \widehat{G}_{k+1}\setminus G_{k}, \\ 0& \esp \text{if} \esp \eta = \xi.
\end{cases}\]We set again $|\xi|_{\widehat{G}}:= d'(\xi , 0).$ Also we will use the notation $\langle \xi \rangle := \max \{ 1 ,|\xi|_{\widehat{G}}\}$. 
Examples of such locally compact Vilenkin groups are the $p$-adic numbers and, more generally, the additive group of a local field.

The definition of compact Vilenkin groups is subtlety different:

\begin{defi}\normalfont
A compact Vilenkin group $G$ is a compact Abelian topological group containing a  strictly  decreasing  sequence  of  open  compact  subgroups $\{G_k\}_{k \in \N_0}$, $G= G_0 \supset G_1 \supset... \supset \{0\}$, such that:
\begin{enumerate}
    \item[(i)] $\sup_{k \in \N_0} \big| G_n /G_{n+1} \big|< 
    \infty.$
    \item[(ii)] $ \bigcap_{k \in \N_0} G_k = \{0\}.$
\end{enumerate}
\end{defi}
The most important examples of compact Vilenkin groups are the balls centred at the origin of a local field, such as the compact group of $p$-adic integers.

For operator acting on function defined over a locally compact Vilenkin group $G$ there already exists a notion of symbol classes. See \cite{pseudosvinlekin, harmonicfractalanalysis}. 

\begin{defi}
Let $m \in \R$ and $0 \leq \delta \leq  \rho \leq 1$ be real numbers. A continuous function $\sigma : G \times  \widehat{G} \to \C$ belongs to the symbol class $\Check{S}^m_{\rho , \delta} (G \times  \widehat{G})$ if the following estimate holds: $$|\triangleplus_y^x \triangleplus_\eta^\xi \sigma (x , \xi)| \leq C_{m , \alpha , \beta, \rho , \delta } |y|_{G}^\beta |\eta|_{\widehat{G}}^\alpha \langle \xi \rangle^{m - \rho \alpha + \delta \beta},$$ for some constant $C_{ \rho , \delta } > 0$ and every $\alpha , \beta \in \N_0$, $|\eta|_{\widehat{G}} \leq \langle \xi \rangle$. 
\end{defi}

The above gives one a symbolic calculus: a composition formula and an adjoint formula. This calculus is very special and different from the usual calculus on the Euclidean case because the residue in the composition formula is an smoothing operator, in contrast with several pseudo-differential calculus in the Arquimidean setting where the residue is just a compact operator. The main goal of this section is to show how our definition of H{\"o}rmander classes has similar properties but also with other desired features.    
\subsection{\textbf{Locally compact Vilenkin groups}}
Let us begin by adjusting Definition \ref{defp-adicsobolev} to the non compact case: 
\begin{defi}\label{defsobolevvilenkin}\normalfont
 For our purposes the $L^r$-based \emph{Sobolev spaces} $H^s_r (G)$ are the metric completion of the collection of simple function on $G$ with the norm $$||f||_{H^s_r (G)} := ||D^s f||_{L^r (G)}.$$ Here we are using the notation $\langle \xi \rangle := \max\{1 , ||\xi||_p \},$ and $$D^s f (x):= \int_{\widehat{G}} \langle \xi \rangle^s \widehat{f}(\xi) \chi_\xi ( x) dx.$$We will call \emph{rapidly decreasing functions on} $G$ to the elements of the \emph{Schwartz space} $$ \mathcal{S} (G) := \bigcap_{s \in \R} H^s_2 (G).$$The class of \emph{tempered distributions on} $G$ is defined as $$\mathcal{S}'(G) := \bigcup_{s \in \R} H^s_2 (G).$$ 
 \end{defi}
We present now our definition of H{\"o}rmander classes on locally compact Vilenkin groups:
\begin{defi}\label{hormanderclasses}\normalfont
    \esp
  \begin{enumerate}
    \item[(i)] For functions $\varphi: \widehat{G} \to \C$ let us define the difference operator $\triangleplus$ as $$\triangleplus_\eta^\xi \varphi (\xi) := \varphi(\xi + \eta) - \varphi (\xi).$$
    \item[(ii)] Let $m \in \R$ and $0 \leq \delta \leq \rho \leq 1$ be given real numbers. We define the symbol classes $\Tilde{S}^m_{\rho, \delta} (G \times \widehat{G})$ as the collection of measurable functions $\sigma: G \times \widehat{G} \to \C$ such that, for all $\alpha , \beta \in \N_0$, the estimate $$|D^\beta_x \triangleplus_\eta^\xi \sigma (x, \xi )| \leq C_{\sigma, m, \alpha, \beta} |\eta|^\alpha_{\widehat{G}} \langle \xi \rangle^{m - \rho \alpha + \delta \beta } , $$ holds for some $C_{\sigma, m, \alpha, \beta,}>0$, and any $(x, \xi,\eta) \in G \times \widehat{G} \times \widehat{G},$ $|\eta|_{\widehat{G}} \leq \langle \xi \rangle.$ Given a symbol $\sigma (x , \xi)$ in a H{\"o}rmander class we define its associated pseudo-differential operator $T_\sigma$ by the formula $$T_\sigma f (x) := \int_{\widehat{G}} \sigma (x , \xi) \widehat{f} (\xi ) \chi_\xi (x) d \xi ,$$ where $\widehat{f}$ is the Fourier transform of the function $f$ in turn defined as $$\widehat{f} (\xi) := \int_{G} f (x) \overline{\chi_\xi (x)} dx.$$We will denote by $Op(\Tilde{S}^m_{\rho, \delta} (G \times \widehat{G}))$ the class of pseudo-differential operators with symbol in the H{\"o}rmander class $\tilde{S}^m_{\rho, \delta} (G \times \widehat{G})$. Furthermore, we define$$\Tilde{S}^{-\infty} (G \times \widehat{G}) := \bigcap_{m \in \R} \Tilde{S}^m_{0,0} (G \times \widehat{G}),$$ $$\Tilde{S}^\infty_{\rho,\delta} (G \times \widehat{G}) := \bigcup_{m \in \R} \Tilde{S}^m_{\rho, \delta} (G \times \widehat{G}).$$Sometimes we will denote by $Op(\Tilde{S}^\infty_{\rho, \delta} (G \times \widehat{G}))$ the class of pseudo-differential operators with symbol in the class $\Tilde{S}^\infty_{\rho, \delta} (G \times \widehat{G})$. 
  \end{enumerate}
 \end{defi}
For the above defined classes we have a symbolic calculus similar to \cite{p-adicHormanderclasses}: 
\begin{pro}
Let $0<\rho \leq 1$ be a given real number. Let $T_{\sigma_1}$, $T_{\sigma_2}$ be pseudo-differential operators with symbols in the Hörmander classes $\sigma_1 \in \Tilde{S}^{m_1}_{\rho , 0} (G \times \widehat{G})$,  $\sigma_2 \in \Tilde{S}^{m_2}_{\rho , 0} (G \times \widehat{G})$. Then :
    \begin{enumerate}
        \item[(i)] $T_\sigma T_\tau = T_{\sigma \tau} + R$, where $R \in Op({\Tilde{S}^{-\infty}} (G \times \widehat{G}))$ and we have $\sigma_1 \sigma_2 \in \Tilde{S}^{m_1 + m_2}_{\rho , 0} (G \times \widehat{G})$.
        \item[(ii)] If $\sigma \in \Tilde{S}^{m}_{\rho , 0} (G \times \widehat{G})$ then $T_\sigma^t , T_\sigma^* \in Op(\Tilde{S}^{m}_{\rho , 0} (G \times \widehat{G}))$.
    \end{enumerate}
\end{pro}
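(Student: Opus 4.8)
The plan is to follow the proof of the compact-group version recalled in the Preliminaries, replacing the Fourier series over $\widehat{\Z}_p^d$ by Fourier integrals over the non-compact dual $\widehat{G}$ and controlling convergence by the decay that the symbol estimates force. First I would derive an exact composition formula. Writing $\widehat{\sigma_2}(\cdot,\xi)$ for the Fourier transform of $\sigma_2(\cdot,\xi)$ in the first variable (a priori a distribution on $\widehat{G}$), a direct computation from the quantisation formula together with Fubini's theorem and the substitution of the frequency variable gives
$$T_{\sigma_1}T_{\sigma_2}=T_{\sigma_{\#}},\qquad \sigma_{\#}(x,\xi)=\int_{\widehat{G}}\sigma_1(x,\xi+\zeta)\,\widehat{\sigma_2}(\zeta,\xi)\,\chi_\zeta(x)\,d\zeta.$$
By Fourier inversion in $x$ one has $\sigma_1(x,\xi)\sigma_2(x,\xi)=\sigma_1(x,\xi)\int_{\widehat{G}}\widehat{\sigma_2}(\zeta,\xi)\chi_\zeta(x)\,d\zeta$, so subtracting isolates the remainder
$$R(x,\xi)=\int_{\widehat{G}}\big(\triangleplus_\zeta^\xi\sigma_1\big)(x,\xi)\,\widehat{\sigma_2}(\zeta,\xi)\,\chi_\zeta(x)\,d\zeta,$$
and the whole of part (i) reduces to showing $R\in\Tilde{S}^{-\infty}(G\times\widehat{G})$ and that $\sigma_1\sigma_2\in\Tilde{S}^{m_1+m_2}_{\rho,0}(G\times\widehat{G})$.

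The decay I would exploit is that, because $D^\beta_x$ is the Fourier multiplier by $\langle\zeta\rangle^\beta$ and the class has $\delta=0$, the bounds $\|D^\beta_x\sigma_2(\cdot,\xi)\|_{L^\infty(G)}\lesssim\langle\xi\rangle^{m_2}$ hold with a power of $\langle\xi\rangle$ independent of $\beta$; this forces $\widehat{\sigma_2}(\cdot,\xi)$ to be rapidly decreasing in $\zeta$ uniformly in $\xi$, i.e. $|\widehat{\sigma_2}(\zeta,\xi)|\lesssim_N\langle\zeta\rangle^{-N}\langle\xi\rangle^{m_2}$ for every $N$ (in the cleanest situation $\sigma_2(\cdot,\xi)$ is locally constant at a fixed scale $G_{N_0}$, so $\widehat{\sigma_2}(\cdot,\xi)$ is supported in the compact open set $\widehat{G}_{N_0}$ and the $\zeta$-integral is plainly finite). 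I would then split the $\zeta$-integral at $|\zeta|_{\widehat{G}}=\langle\xi\rangle$. On $|\zeta|_{\widehat{G}}\le\langle\xi\rangle$ the difference estimate $|\triangleplus_\zeta^\xi\sigma_1|\lesssim|\zeta|_{\widehat{G}}^\alpha\langle\xi\rangle^{m_1-\rho\alpha}$ for arbitrarily large $\alpha$, combined with the decay of $\widehat{\sigma_2}$, produces any negative power of $\langle\xi\rangle$ (here $\rho>0$ is essential). On $|\zeta|_{\widehat{G}}>\langle\xi\rangle$ the strong triangle inequality $\langle\xi+\zeta\rangle=\langle\zeta\rangle$ bounds $|\triangleplus_\zeta^\xi\sigma_1|$ by $\langle\zeta\rangle^{|m_1|}$, and integrating $\langle\zeta\rangle^{|m_1|-N}$ over that tail again gives arbitrarily fast decay in $\langle\xi\rangle$. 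Applying the same scheme to $D^\beta_x\triangleplus_\eta^\xi R$ yields all the seminorms, so $R\in\Tilde{S}^{-\infty}$.

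That $\sigma_1\sigma_2\in\Tilde{S}^{m_1+m_2}_{\rho,0}$ I would obtain from the Leibniz rule $\triangleplus_\eta^\xi(\sigma_1\sigma_2)=(\triangleplus_\eta^\xi\sigma_1)\,\sigma_2(\cdot,\xi+\eta)+\sigma_1\,(\triangleplus_\eta^\xi\sigma_2)$ iterated with the ordinary product rule for $D^\beta_x$, exactly as in the compact case. For part (ii) I would compute the symbols of $T_\sigma^t$ and $T_\sigma^*$ by the same Fourier manipulations, obtaining integral formulas analogous to $\sigma_{\#}$, and rerun the splitting estimate to verify the Hörmander bounds of order $m$; as over $\Z_p^d$ their leading terms are $\sigma$ and $\overline{\sigma}$ modulo $\Tilde{S}^{-\infty}$. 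The step I expect to be the real obstacle is none of these algebraic manipulations but the analytic bookkeeping forced by the non-compactness of $\widehat{G}$: one must justify that $\widehat{\sigma_2}(\cdot,\xi)$ is a legitimate object, that every interchange of integrals is licensed by absolute convergence, and that the formal expansion is genuinely exact modulo smoothing. All of this rests on the two features highlighted above, namely the uniform rapid decay of $\widehat{\sigma_2}$ coming from $\delta=0$ and the strong triangle inequality $\langle\xi+\zeta\rangle=\max\{\langle\xi\rangle,\langle\zeta\rangle\}$, which together replace the summability that made the compact case routine.
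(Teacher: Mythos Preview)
The paper does not actually supply a proof of this proposition; it is stated as the locally compact analogue of the calculus developed in \cite{p-adicHormanderclasses} for $\Z_p$, with the implicit claim that the same arguments carry over. Your sketch is therefore exactly the kind of argument the paper is pointing to, and the overall architecture---exact composition formula, subtraction of the product to isolate a remainder written with $\triangleplus_\zeta^\xi\sigma_1$, splitting the $\zeta$-integral at $|\zeta|_{\widehat{G}}=\langle\xi\rangle$, and the Leibniz identity for the product---is the correct one and matches what the compact case does.

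There is, however, one step that is not merely bookkeeping but a genuine gap as written. You assert that the uniform bounds $\|D^\beta_x\sigma_2(\cdot,\xi)\|_{L^\infty(G)}\lesssim\langle\xi\rangle^{m_2}$ force $|\widehat{\sigma_2}(\zeta,\xi)|\lesssim_N\langle\zeta\rangle^{-N}\langle\xi\rangle^{m_2}$. On a \emph{compact} group this follows because $L^\infty(G)\hookrightarrow L^1(G)$, so $|\widehat{\sigma_2}(\zeta,\xi)|\le\langle\zeta\rangle^{-\beta}\|D^\beta_x\sigma_2(\cdot,\xi)\|_{L^1}$ gives the decay. On a non-compact Vilenkin group that embedding fails: the constant symbol $\sigma_2\equiv1$ has $D^\beta_x\sigma_2=1$ bounded for every $\beta$, yet $\widehat{\sigma_2}(\cdot,\xi)=\delta_0$ is not a function at all, let alone rapidly decreasing. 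Thus the class condition with $\delta=0$ does \emph{not} by itself produce the decay you need, and your parenthetical ``cleanest situation'' (local constancy at a fixed scale, making $\widehat{\sigma_2}$ compactly supported) is not a technical convenience but essentially the missing hypothesis. To close the argument one must either work distributionally throughout---pairing $\widehat{\sigma_2}(\cdot,\xi)$ against the smooth test function $\zeta\mapsto(\triangleplus_\zeta^\xi\sigma_1)(x,\xi)\chi_\zeta(x)$ and extracting the decay from the $\triangleplus$-estimates on $\sigma_1$ rather than from $\widehat{\sigma_2}$---or impose an additional regularity condition on $\sigma_2(\cdot,\xi)$ (such as uniform local constancy, which is what the classes $\check{S}^m_{\rho,\delta}$ of \cite{pseudosvinlekin} effectively encode via the difference operator in $x$). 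You correctly flag this region of the argument as ``the real obstacle,'' but the resolution requires more than absolute-convergence bookkeeping: the decay has to be manufactured on the $\sigma_1$ side, not assumed on the $\sigma_2$ side.
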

Also, using the multiplier theorem in \cite{pseudosvinlekin}, we can prove the Sobolev boundedness of operators in the $(1,0)$-class:
\begin{teo}
Let $1<r < \infty$ be a given real number. Let $T_\sigma$ be a pseudo-differential operator with symbol $\sigma \in \Tilde{S}^m_{1 , 0} ( G \times \widehat{G})$ and let $s \in \R$ be a given real number. Then $T_\sigma$ extends to a bounded operator from $H^{s+m}_r (\Z_p^d)$ to $H^{s}_r (\Z_p^d)$.
\end{teo}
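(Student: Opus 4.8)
The plan is to reduce the statement to an order-zero $L^r$-bound, exactly along the lines of Theorem \ref{L2boundedness} and Corollary \ref{lrboundedness}. First I would observe that $T_\sigma : H^{s+m}_r(G) \to H^s_r(G)$ extends boundedly if and only if $A := D^s T_\sigma D^{-(s+m)}$ extends to a bounded operator on $L^r(G)$, since by Definition \ref{defsobolevvilenkin} the operator $D^t$ is an isometric isomorphism from $H^u_r(G)$ onto $H^{u-t}_r(G)$ for all $t,u$. Composing the Fourier multipliers $D^s$ and $D^{-(s+m)}$, whose symbols $\langle\xi\rangle^{s}$ and $\langle\xi\rangle^{-(s+m)}$ lie in $\Tilde{S}^{s}_{1,0}$ and $\Tilde{S}^{-(s+m)}_{1,0}$, with $T_\sigma$ and invoking the composition part of the preceding Proposition, the symbol $\sigma_A$ of $A$ belongs to $\Tilde{S}^0_{1,0}(G\times\widehat G)$. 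Thus everything comes down to showing that every operator in $Op(\Tilde{S}^0_{1,0}(G\times\widehat G))$ is bounded on $L^r(G)$ for $1<r<\infty$.

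For this order-zero bound I would imitate the scheme of Corollary \ref{lrboundedness}: freezing the spatial variable, set $A_y f(x) := \int_{\widehat G}\sigma_A(y,\xi)\widehat f(\xi)\chi_\xi(x)\,d\xi$ so that $Af(x)=A_x f(x)$, apply the Sobolev embedding in the frozen variable to bound $|Af(x)|$ by $||D^\beta_y A_y f(x)||_{L^r_y}$ for $\beta$ above the critical exponent, raise to the power $r$, integrate in $x$, and apply Fubini. The inner integral $\int_G |D^\beta_y A_y f(x)|^r\,dx$ is then the $L^r$-norm of the Fourier multiplier with symbol $\xi\mapsto D^\beta_y\sigma_A(y,\xi)$ applied to $f$; because $\delta=0$ the spatial derivatives do not raise the order, so each such multiplier lies in $\Tilde{S}^0_{1,0}$ with seminorms uniform in $y$, and the multiplier theorem \cite[Theorem III.1]{pseudosvinlekin} bounds all of them on $L^r(G)$ by a single constant.

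The main obstacle is that this last step breaks in the non-compact setting. On $\Z_p^d$ the outer integral in $y$ runs over a set of total mass one, so after the uniform multiplier bound one collects a finite constant and is done; on a non-compact $G$ the same integral is over a set of infinite Haar measure, and the Sobolev embedding of Lemma \ref{Sobolevembedd}, whose proof used the finiteness of the measure in the Hausdorff--Young step, is no longer available in that form. To get around this I would avoid the Fubini detour altogether and apply the multiplier theorem of \cite{pseudosvinlekin} directly to the full $x$-dependent operator $A$; the real work is then to check that our derivative-based class $\Tilde{S}^0_{1,0}(G\times\widehat G)$ is controlled by the difference-based class $\Check{S}^0_{1,0}(G\times\widehat G)$ for which \cite[Theorem III.1]{pseudosvinlekin} is stated, using that on a Vilenkin group the finite spatial differences $\triangleplus^x_y$ are comparable, through the metric weight $|y|_G$, to the derivatives $D^\beta_x$. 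Once this comparison of the two symbol classes is established the order-zero $L^r$-boundedness follows from their theorem, and the reduction of the first paragraph completes the argument; verifying this comparison is the step I expect to demand the most care.
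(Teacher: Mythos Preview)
The paper does not provide an explicit proof of this statement; it is stated with only the one-line remark that it follows ``using the multiplier theorem in \cite{pseudosvinlekin}'', and for the $\Z_p^d$ version at the end of Section~3 the paper says it is ``easily deduced with a proof as the above and the multiplier theorem \cite[Theorem III.1]{pseudosvinlekin}.'' Your proposal is faithful to exactly this sketch: the reduction $A=D^s T_\sigma D^{-(s+m)}$ to order zero, the freezing argument of Theorem~\ref{L2boundedness}/Corollary~\ref{lrboundedness}, and the appeal to the multiplier theorem of \cite{pseudosvinlekin} are precisely what the paper has in mind.

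Where you go beyond the paper is in flagging the non-compact issue honestly. You are right that the Fubini step and Lemma~\ref{Sobolevembedd}, as written, rely on $\mu(G)<\infty$, so the ``proof as above'' does not transfer verbatim to a non-compact Vilenkin group. Your proposed remedy---bypass the freezing detour and invoke \cite[Theorem III.1]{pseudosvinlekin} directly on the $x$-dependent operator, after checking that $\Tilde{S}^0_{1,0}(G\times\widehat G)$ is contained in (or controlled by) the difference-based class $\Check{S}^0_{1,0}(G\times\widehat G)$ used there---is the natural route and is in fact the only content behind the paper's one-line reference. The comparison of the two symbol classes is indeed the substantive point; once $\Tilde{S}^0_{1,0}\subset\Check{S}^0_{1,0}$ is verified, the result is immediate from \cite{pseudosvinlekin}, and no freezing or Sobolev embedding is needed at all.
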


Moreover, we can use the symbolic representation of operators acting on $L^2 (G)$ to obtain some spectral information:
\begin{teo}\label{Fredholmspectrumformulavilenkin}
Let $0 < \rho \leq 1$ be a real number. Let $T_\sigma$ be a pseudo-differential operator with associated symbol in the H{\"o}rmander class $\Tilde{S}^0_{\rho,0} (G \times \widehat{G})$. Then, considering $T_\sigma$ as a bounded operator on $L^2(G)$, we have$$Spec_F (T_\sigma) = \bigcap_{n \in \N_0} \overline{\bigcup_{|\xi|_{\widehat{G}} + |x|_G \geq n} \{\sigma (x,\xi) \esp : \esp x \in G\}}. $$
\end{teo}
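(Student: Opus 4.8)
The plan is to adapt the parametrix argument of Theorem \ref{Fredholmspectrumformula}, the decisive new feature being that on a non-compact $G$ one must work modulo \emph{compact} operators rather than modulo smoothing operators: unlike the compact case, a smoothing operator on $L^2(G)$ need not be compact, as already a Fourier multiplier with rapidly decreasing symbol has continuous spectrum. This is exactly why the exhaustion is taken over the phase-space balls $\{|\xi|_{\widehat{G}}+|x|_G\ge n\}$ and not over high frequencies alone; for an $x$-independent symbol the term $|x|_G\ge n$ already forces the whole range of $\sigma$ into the spectrum, which recovers the correct answer $\overline{\mathrm{range}\,\sigma}$ for Fourier multipliers. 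Write $\Phi_n:=\overline{\{\sigma(x,\xi):|\xi|_{\widehat{G}}+|x|_G\ge n\}}$, so that the assertion is $Res_F(T_\sigma)=\bigcup_{n}(\C\setminus\Phi_n)$.

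First I would establish a compactness criterion: if $\tau\in\Tilde{S}^0_{\rho,0}(G\times\widehat{G})$ vanishes at phase-space infinity, in the sense that $\sup_{|x|_G+|\xi|_{\widehat{G}}\ge R}|\tau(x,\xi)|\to0$ as $R\to\infty$ together with the same decay of finitely many of its difference-derivative seminorms, then $T_\tau$ is compact on $L^2(G)$. The point special to the totally disconnected setting is that a smooth symbol supported in a phase-space ball $\{|x|_G\le R,\ |\xi|_{\widehat{G}}\le R\}$ yields a finite-rank operator: the frequency cut $\xi\in\widehat{G}_k$ forces the output to be constant on cosets of $G_k$, the spatial cut confines it to a compact ball $G_{-k'}$, and $|G_{-k'}/G_k|<\infty$ makes the range finite-dimensional. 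Truncating $\tau$ to larger and larger phase-space balls, whose indicators are themselves smooth because they are locally constant, and using a Calderón--Vaillancourt-type bound controlling $\|T_\tau\|_{\mathcal{L}(L^2(G))}$ by finitely many symbol seminorms, the truncations converge to $T_\tau$ in operator norm; closedness of the compact ideal then finishes the criterion.

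For the inclusion $\bigcup_{n}(\C\setminus\Phi_n)\subseteq Res_F(T_\sigma)$, fix $\lambda\notin\Phi_{n_0}$, so that $|\sigma(x,\xi)-\lambda|\ge\delta>0$ whenever $|x|_G+|\xi|_{\widehat{G}}\ge n_0$. Let $\psi$ be the indicator of the phase-space ball $\{|x|_G+|\xi|_{\widehat{G}}<n_0\}$ and set $\beta:=(1-\psi)/(\sigma-\lambda)$, which lies in $\Tilde{S}^0_{0,0}(G\times\widehat{G})$ by ellipticity at infinity. The symbolic calculus then gives $T_\beta(T_\sigma-\lambda I)=T_{1-\psi}+R=I-T_\psi+R$ with $R\in Op(\Tilde{S}^{-\infty}(G\times\widehat{G}))$, and symmetrically on the other side. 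Here $T_\psi$ is finite-rank by the previous step. The crucial observation is that $\psi$ is supported in a bounded region, so that $\beta(x,\xi)=1/(\sigma(x,\xi)-\lambda)$ carries no $x$-dependence coming from the cutoff once $|x|_G$ is large; hence every $x$-difference of $\beta$ entering $R$ vanishes for $|x|_G\ge n_0$, and the symbol of $R$ is supported in bounded $|x|_G$ in addition to being rapidly decreasing in $|\xi|_{\widehat{G}}$. Thus $R$ decays at phase-space infinity and is compact by the criterion above, whence $\lambda\in Res_F(T_\sigma)$.

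For the reverse inclusion I would prove a Gohberg-type lemma localized at phase-space infinity. If $\lambda\in\bigcap_n\Phi_n$, choose $(x_k,\xi_k)$ with $|x_k|_G+|\xi_k|_{\widehat{G}}\to\infty$ and $\sigma(x_k,\xi_k)\to\lambda$, and form the normalized wave packets $\phi_k(x)=|B_k|^{-1/2}\mathbf{1}_{B_k}(x)\,\chi_{\xi_k}(x)$ with $B_k=x_k+G_{m_k}$ a ball of suitably chosen scale localizing at position $x_k$ and frequency $\xi_k$. These converge weakly to zero, so $\|K\phi_k\|_{L^2(G)}\to0$ for every compact $K$, while the near-diagonal action of a smooth symbol gives $\|(T_\sigma-\sigma(x_k,\xi_k)I)\phi_k\|_{L^2(G)}\to0$ and therefore $\|(T_\sigma-\lambda I)\phi_k\|_{L^2(G)}\to0$. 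If $T_\sigma-\lambda I$ were Fredholm there would be a bounded $B$ and a compact $K$ with $B(T_\sigma-\lambda I)=I+K$, whence $1=\lim_k\|\phi_k\|=\lim_k\|(I+K)\phi_k\|=\lim_k\|B(T_\sigma-\lambda I)\phi_k\|\le\|B\|\lim_k\|(T_\sigma-\lambda I)\phi_k\|=0$, a contradiction; thus $\lambda\in Spec_F(T_\sigma)$. The main obstacle throughout is precisely this passage from ``modulo smoothing'' to ``modulo compact'': the whole weight of the proof rests on the compactness criterion and on checking that the parametrix residue decays in the spatial variable as well as in frequency, which is the analytic meaning of admitting $|x|_G$ into the exhaustion and has no counterpart in the compact-group argument.
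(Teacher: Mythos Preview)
The paper does not supply a proof of this theorem; it is stated as the non-compact analogue of Theorem~\ref{Fredholmspectrumformula} and left at that. Your overall strategy---replace ``modulo smoothing'' by ``modulo compact'', prove a phase-space compactness criterion, build a parametrix, and run a wave-packet Gohberg argument for the converse---is the right one, and the Gohberg half of your argument is sound.

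There is, however, a genuine gap in the parametrix direction. You assert that the residue $R$ in $T_\beta(T_\sigma-\lambda I)=I-T_\psi+R$ is compact because ``every $x$-difference of $\beta$ entering $R$ vanishes for $|x|_G\ge n_0$''. But the composition formula on these classes produces a remainder whose symbol is
\[
\sigma_R(x,\eta)=\int_{\widehat G}\bigl[\beta(x,\eta+\zeta)-\beta(x,\eta)\bigr]\,\widehat\sigma(\zeta,\eta)\,\chi_\zeta(x)\,d\zeta,
\]
that is, the \emph{$\xi$-differences} of the first symbol $\beta$ paired with the $x$-Fourier transform of the second symbol $\sigma$---not $x$-differences of $\beta$. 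For $|x|_G\ge n_0$ one has $\beta(x,\cdot)=1/(\sigma(x,\cdot)-\lambda)$, which still depends on $x$ through $\sigma$; its $\xi$-differences do not vanish there, and nothing forces $\sigma_R(x,\eta)$ to decay as $|x|_G\to\infty$. The rapid $\eta$-decay places $R$ in $Op(\tilde S^{-\infty}(G\times\widehat G))$, but on a non-compact $G$ a smoothing operator need not be compact (the embedding $H^N_2(G)\hookrightarrow L^2(G)$ is not compact), exactly as you yourself warn in your opening paragraph. So the step ``$R$ is compact by the criterion above'' is not justified as written, and this is precisely where the weight of the non-compact case lies.

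A secondary issue: your finite-rank claim for symbols supported in a phase-space ball overshoots. The assertion ``the frequency cut $\xi\in\widehat G_k$ forces the output to be constant on cosets of $G_k$'' is only true when the symbol itself is locally constant in $x$, which membership in $\tilde S^0_{\rho,0}$ does not guarantee. What one does obtain is that such an operator has bounded kernel supported in a set of finite product measure in $G\times\widehat G_k$, hence is Hilbert--Schmidt; this is enough for your compactness criterion, so the damage here is cosmetic.
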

However, in the compact case several technical issues doesn't exists and we can give a better spectral information.
\subsection{\textbf{Compact Vilenkin groups}}
The case of compact Vilenkin groups is completely analogous to the case of $\Z_p^d$. Thus we can extend easily all the results collected in this paper. For example we have:
\begin{teo}\label{uniform compactpseudosvilenkin}
Let $T_\sigma \in Op(S^0_{1 , 0} (G \times \widehat{G}))$ be a pseudo-differential operator. Them $T_\sigma$ extends to bounded operator in the ideal $\mathfrak{R}(L^r(G)) \cap Op(\Tilde{S}^0_{1,0} (G \times \widehat{G}))$ if and only if $$d_\sigma :=\limsup_{|\xi|_{\widehat{G}} \to \infty} || \sigma (\cdot , \xi)||_{L^\infty (G)} = 0.$$
\end{teo}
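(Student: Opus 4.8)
The plan is to reproduce, mutatis mutandis, the proof of Theorem \ref{uniform compactpseudos} given above for $\Z_p^d$, replacing $\Z_p^d$ by $G$, the dual $\widehat{\Z}_p^d$ by $\widehat{G}$, the $p$-adic norm $\|\xi\|_p$ by the dual metric $|\xi|_{\widehat{G}}$, and the characters $\chi_p(\xi\cdot x)$ by $\chi_\xi(x)$. Three ingredients are needed, each of which should survive the passage to a general compact Vilenkin group: the composition formula for the Hörmander calculus (the Proposition stated above for $G$, which gives that the symbol of $T_\sigma^k$ equals $\sigma^k$ modulo $\Tilde{S}^{-\infty}(G\times\widehat{G})$), a Fourier multiplier theorem in the style of \cite[Theorem III.1]{pseudosvinlekin}, and the analogue of Gohberg's Lemma \ref{gohbers}. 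I would also use, exactly as in the $\Z_p^d$ case, that $\mathfrak{R}(L^r(G))$ is a closed two-sided ideal and that membership of $T_\sigma$ in this radical, together with the algebra structure of $Op(\Tilde{S}^0_{1,0}(G\times\widehat{G}))$, forces some power $T_\sigma^k$ ($k\in\N$) to be compact.

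For the necessity direction I would assume $T_\sigma\in\mathfrak{R}(L^r(G))\cap Op(\Tilde{S}^0_{1,0}(G\times\widehat{G}))$ and fix $k$ with $T_\sigma^k$ compact. By the composition formula the symbol of $T_\sigma^k$ is $\sigma^k+r$ with $r\in\Tilde{S}^{-\infty}(G\times\widehat{G})$; since $\|r(\cdot,\xi)\|_{L^\infty(G)}\to 0$ as $|\xi|_{\widehat{G}}\to\infty$ and $\|\sigma(\cdot,\xi)^k\|_{L^\infty(G)}=\|\sigma(\cdot,\xi)\|_{L^\infty(G)}^k$, one obtains $d_{\sigma^k+r}=(d_\sigma)^k$. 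Applying the Vilenkin form of Gohberg's Lemma to the compact operator $T_\sigma^k$ would then give $d_{\sigma^k+r}=0$, whence $(d_\sigma)^k=0$ and $d_\sigma=0$.

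For the sufficiency direction I would assume $d_\sigma=0$ and expand $\sigma(x,\xi)=\sum_{\eta\in\widehat{G}}\widehat{\sigma}(\eta,\xi)\chi_\eta(x)$ in the first variable. Each Fourier multiplier $T_{\widehat{\sigma}_\eta}$, with symbol $\widehat{\sigma}_\eta(\xi):=\widehat{\sigma}(\eta,\xi)$, is bounded by the multiplier theorem and lies in the ideal because $\lim_{|\xi|_{\widehat{G}}\to\infty}|\widehat{\sigma}(\eta,\xi)|\lesssim\limsup_{|\xi|_{\widehat{G}}\to\infty}\|\sigma(\cdot,\xi)\|_{L^\infty(G)}=0$, using that $G$ has finite Haar measure so the first-variable Fourier coefficient is dominated by the $L^\infty(G)$ norm. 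Since modulation $f\mapsto\chi_\eta f$ is bounded on $L^r(G)$ and the ideal is two-sided, each $A_\eta f(x):=\chi_\eta(x)T_{\widehat{\sigma}_\eta}f(x)$ would lie in $\mathfrak{R}(L^r(G))\cap Op(\Tilde{S}^0_{1,0}(G\times\widehat{G}))$, and so would every finite sum $\sum_{|\eta|_{\widehat{G}}\leq N}A_\eta$. These sums converge in operator norm to $T_\sigma$ by the same computation used for $\Z_p^d$, and since the ideal is closed, $T_\sigma$ would belong to it.

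The only genuinely non-formal point, and the one I expect to be the main obstacle, is the transfer of Gohberg's Lemma, specifically its equicontinuity step: one must verify that the symbols $\{\sigma(\cdot,\xi)\}_{\xi\in\widehat{G}}$ are continuous and uniformly equicontinuous, which in the $\Z_p^d$ case rested on the Sobolev--Hölder embedding of \cite{sobolevmetrizablegroups}. Since a compact Vilenkin group is metrizable through the metric $d$ introduced above and carries a finite Haar measure, I expect those embedding results to apply verbatim; the remaining ingredients of Gohberg's Lemma --- weak nullity of the characters, compactness of $G$, and the ball-supported multiplication operators $M_\delta$ --- should carry over without change.
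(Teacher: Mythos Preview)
Your proposal is correct and follows exactly the approach the paper intends: the paper gives no separate proof for the compact Vilenkin case, merely stating that it is ``completely analogous to the case of $\Z_p^d$'' and that all results ``extend easily,'' so the proof is precisely the transfer of the argument for Theorem~\ref{uniform compactpseudos} (Gohberg's Lemma plus the composition formula for the necessity, and the Fourier-coefficient decomposition into multipliers plus the closedness of the ideal for the sufficiency) that you have outlined. Your identification of the equicontinuity step in Gohberg's Lemma as the only point requiring care, and your remark that the Sobolev--H\"older embeddings of \cite{sobolevmetrizablegroups} apply to compact metrizable groups, match the paper's own justification.
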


\begin{teo}
Let $G$ be a compact Vilenkin group. Let $T_\sigma \in Op(\Tilde{S}^0_{0,0}(G \times \widehat{G}))$ be a compact pseudo-differential operator. Assume that the sequence $\{||\sigma (x , \xi_k)||_{L^2 (G)}\}_{k \in \N_0}$ is ordered in a non increasing order. Then:
\esp 
\begin{enumerate}
    \item[(i)] $T_\sigma$ is a Dixmier traceable if and only if $$||T_\sigma||_{\mathcal{L}^{(1 , \infty)} (L^2 (G))}:= \sup_{N \geq 1} \frac{1}{\log(1 + N)} \sum_{0 \leq k \leq N} ||\sigma (\cdot , \xi_k)||_{L^2 (G)} < \infty.$$
    \item[(ii)] $T_\sigma$ is of $(s , w)$-type if and only if $$||T_\sigma||_{\mathfrak{L}_{s,w} (L^2 (G))} := \Big( \sum_{k \in \N_0} [k^{\frac{1}{r} - \frac{1}{w}} ||\sigma (\cdot , \xi_k)||_{L^2 (G)}]^w \Big)^{1/w}.$$
    \item[(iii)] $T_\sigma$ belong to the Schatten-von Newmann class $S_\gamma (L^2 (G))$ if and only if $$||T||_{S_\gamma (L^2 (G))} := \Big( \sum_{k \in \N_0}||\sigma (\cdot , \xi_k)||_{L^2 (G)}^\gamma \Big)^{1/\gamma}< \infty .$$
\end{enumerate}
\end{teo}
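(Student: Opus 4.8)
The plan is to replicate verbatim the proof given for $\Z_p^d$, since that argument used nothing beyond the Fourier analysis of a compact metrizable abelian group, and every compact Vilenkin group $G$ supplies exactly such structure together with the uniform bound $\sup_{k}|G_k/G_{k+1}|<\infty$. The decisive step is to re-establish the two-sided singular-value estimate of Lemma \ref{lemmasvalus} in the present setting: after reindexing $\widehat{G}=\{\xi_k\}_{k\in\N_0}$ so that $k\mapsto ||\sigma(\cdot,\xi_k)||_{L^2(G)}$ is non-increasing, I would show
$$||\sigma(\cdot,\xi_k)||_{L^2(G)}\leq s_k(T_\sigma)\lesssim ||\sigma(\cdot,\xi_k)||_{L^2(G)}.$$
Granting this, the three equivalences are immediate: substituting the estimate into $||T_\sigma||_{S_\gamma}^\gamma=\sum_k s_k(T_\sigma)^\gamma$ gives (iii); substituting it into the Dixmier functional $\sup_N(\log(1+N))^{-1}\sum_{0\leq k\leq N}s_k(T_\sigma)$ gives (i); and substituting it into the $(s,w)$-type functional $\sum_k[k^{1/r-1/w}s_k(T_\sigma)]^w$ gives (ii).

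For the singular-value estimate I would argue exactly as in Lemma \ref{lemmasvalus}. Because $L^2(G)$ is a Hilbert space there is a single $s$-scale, so $s_k(T_\sigma)=a_k(T_\sigma)$ and it suffices to control the approximation numbers. For the upper bound, let $L_k$ be the finite-rank operator whose symbol agrees with $\sigma$ on $\{\xi_l:l<k\}$ and vanishes otherwise; then $T_\sigma-L_k$ has symbol supported on $\{\xi_l:l\geq k\}$, and the $G$-analog of Corollary \ref{coroequivopL2norm} yields $a_k(T_\sigma)\leq ||T_\sigma-L_k||_{\mathcal{L}(L^2(G))}\lesssim\sup_{l\geq k}||\sigma(\cdot,\xi_l)||_{L^2(G)}=||\sigma(\cdot,\xi_k)||_{L^2(G)}$. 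For the lower bound, given any $P_k$ with $\operatorname{rank}(P_k)\leq k$, a dimension count produces a unit vector in the $(k+1)$-dimensional span of $\chi_{\xi_0},\dots,\chi_{\xi_k}$ that is orthogonal to the at most $k$-dimensional range of $P_k^*$ and hence annihilated by $P_k$; combined with the quantization identity $T_\sigma\chi_\xi=\sigma(\cdot,\xi)\chi_\xi$ and $|\chi_\xi|\equiv 1$, this forces $||T_\sigma-P_k||_{\mathcal{L}(L^2(G))}\geq ||\sigma(\cdot,\xi_k)||_{L^2(G)}$, so $s_k(T_\sigma)\geq ||\sigma(\cdot,\xi_k)||_{L^2(G)}$.

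Thus the only genuine work is to transport Proposition \ref{proinftynorm} and Corollary \ref{coroequivopL2norm} to $G$, as these supply the comparison $\sup_\xi||\sigma(\cdot,\xi)||_{L^2(G)}\leq ||T_\sigma||_{\mathcal{L}(L^2(G))}\lesssim\sup_\xi||\sigma(\cdot,\xi)||_{L^2(G)}$ invoked above. I expect this to be the main obstacle, and it rests on two facts. First, the Sobolev-to-Hölder embedding that renders each $\sigma(\cdot,\xi)$ continuous, so that $||\sigma(\cdot,\xi)||_{L^\infty(G)}$ is attained at a point $x_\xi$ and the multiplication-operator argument of Proposition \ref{proinftynorm} applies; this holds because $G$ is a compact metrizable abelian group and so falls under the embedding theory of \cite{sobolevmetrizablegroups}. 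Second, the identification of $Op(\Tilde{S}^0_{0,0}(G\times\widehat{G}))$ with the intersection of the Schur classes, which furnishes the reverse bound $||T_\sigma||_{\mathcal{L}(L^2(G))}\lesssim\sup_\xi||\sigma(\cdot,\xi)||_{L^\infty(G)}$; the proof of this in \cite{p-adicHormanderclasses} uses only the ultrametric structure of the frequency set and the uniform ramification bound, both intrinsic to any compact Vilenkin group. With these two ingredients in place the whole chain reproduces itself, completing the proof.
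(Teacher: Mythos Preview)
Your proposal is correct and follows exactly the approach the paper intends: the paper gives no separate proof for the compact Vilenkin case, stating only that it is ``completely analogous to the case of $\Z_p^d$'' and remarking after the $\Z_p^d$ version of the theorem that ``the proof of the above theorem does not use anything special of the compact abelian group $\Z_p^d$.'' You have spelled out precisely what that analogy requires---transporting Proposition \ref{proinftynorm}, Corollary \ref{coroequivopL2norm}, and Lemma \ref{lemmasvalus} to $G$ and then reading off (i)--(iii) from the two-sided singular-value bound---which is all the paper does, only in more detail.
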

\section*{\textbf{Acknowledgments}}
The author thanks Professor Michael Ruzhansky for his help during the development of this work.
\nocite{*}
\bibliographystyle{acm}
\bibliography{main}
\Addresses

\end{document}